\documentclass[11pt,english,a4paper]{smfart}

\usepackage[latin1]{inputenc} 
\usepackage{vaucanson-g}
\usepackage{graphicx}
\usepackage{ae,amsfonts,euscript,enumerate}
\usepackage[cm]{aeguill}

\tolerance=10000

\newtheorem{thm}{Theorem}[section]

\newtheorem{defn}{Definition}[section]

\newtheorem{prop}{Proposition}[section]
\newtheorem{lem}{Lemma}[section]
\newtheorem{cor}{Corollary}[section]

\newtheorem{rem}{Remark}[section]

\newcommand{\KK}{K\langle\langle x_1,\ldots,x_n\rangle\rangle}

\newcommand{\KKxy}{K\langle\langle x_1,\ldots,x_n,y_1,\ldots y_n\rangle\rangle}

\newcommand{\KKnn}{K\langle\langle x_1,\ldots,x_{n-1}\rangle\rangle}

\begin{document}
\bibliographystyle{plain}
\title[Diagonalization and Rationalization ]{Diagonalization and Rationalization of algebraic Laurent series}
\author{Boris Adamczewski}
\address{
CNRS, Universit\'e de Lyon, Universit\'e Lyon 1\\
Institut Camille Jordan  \\
43 boulevard du 11 novembre 1918 \\
69622 Villeurbanne Cedex, France}
\email{Boris.Adamczewski@math.univ-lyon1.fr}

\author{Jason P.~Bell}
\thanks{The First author was supported by ANR grants Hamot and SubTile. The second author was supported by NSERC grant 31-611456.}

\address{
Department of Mathematics\\
Simon Fraser University\\
Burnaby, BC, Canada\\
 V5A 1S6}

\email{jpb@math.sfu.ca}




\bibliographystyle{plain}
\begin{altabstract}  
Nous d\'emontrons une version quantitative d'un r\'esultat  de Furstenberg \cite{Fur} et Deligne \cite{De} : 
la diagonale d'une s\'erie formelle alg\'ebrique de plusieurs variables \`a coefficients dans un corps de 
caract\'eristique non nulle est une s\'erie formelle alg\'ebrique d'une variable.  
Comme cons\'equence, nous obtenons que, pour tout nombre premier $p$,  la r\'eduction modulo $p$ de la diagonale d'une  
 s\'erie formelle alg\'ebrique de plusieurs variables $f$ \`a coefficients entiers est une s\'erie formelle alg\'ebrique de degr\'e au plus 
 $p^{A}$ et de hauteur au plus $A^2p^{A+1}$,  o\`u $A$ est une constante effective ne d\'ependant que du nombre de variables, 
 du degr\'e de $f$ et de la hauteur de $f$.   Cela r\'epond \`a une question soulev\'ee par Deligne \cite{De}.
\end{altabstract}
\begin{abstract}  
We prove a quantitative version of a result of Furstenberg \cite{Fur} and Deligne \cite{De} 
stating that the the diagonal of a multivariate algebraic 
power series with coefficients in a field of positive characteristic is algebraic.   
As a consequence, we obtain that for every prime $p$ the reduction modulo $p$ of the diagonal of a 
multivariate algebraic power series $f$ with integer coefficients is an algebraic power series of degree 
at most $p^{A}$ and height at most $A^2p^{A+1}$, where $A$ is an effective constant that only depends on the number of variables, 
the degree of $f$ and the height of $f$.   
This answers a question raised by Deligne \cite{De}.
\end{abstract}
\maketitle

\tableofcontents
\section{Introduction}

A very rich interplay between arithmetic, geometry, transcendence and combinatorics 
arises in the study of homogeneous linear differential equations and especially of  those that ``come from geometry" and the related study 
of Siegel $G$-functions (see for instance \cite{An89,DGS, KZ,St80,St99,Miw06} for discussions that emphasize these different aspects).  
As an illustration, let us recall a few of the many classical results attached to  the differential equation 
$$
t(t-1)y''(t) + (2t-1)y'(t) + \frac{1}{4}y(t) = 0 \, . 
$$

\smallskip

\begin{itemize}

\item[$\bullet$] This differential equation comes from geometry:   
it is the Picard--Fuchs equation of the Legendre family of elliptic curves $\mathcal E_t$ defined by  the equation 
$y^2 = x(x-1)(x-t)$.

\medskip

\item[$\bullet$] The unique solution (up to multiplication by a scalar) that is holomorphic at the origin is the function 
$$
f_1(t) := \frac{2}{\pi} \int_{0}^{\pi/2} \frac{d\theta}{\sqrt{1-t\sin^2 \theta}}  \, \cdot
$$

\medskip

\item[$\bullet$]  For nonzero algebraic numbers $t$ in the open unit disc, $f_1(t)$ is an elliptic integral and $\pi f_1(t)$ is a period in the sense 
of Kontsevich and Zagier \cite{KZ} which are both known to be 
transcendental  (see for instance the complete survey \cite{Miw08}).  
In particular, the function $f_1$ is a transcendental function over $\mathbb Q(t)$. 

 

\medskip 

\item[$\bullet$]  The function $f_1$ has the following nice hypergeometric Taylor series expansion:  
$$
f_1(16t) =  \sum_{n= 0}^{+\infty}{2n \choose n}^2 t^n \in \mathbb Z[[t]] \, ,
$$
which corresponds to a classical generating function in enumerative combinatorics (associated for instance with 
 the square lattice walks  that start  and end at origin). 

\end{itemize}

\medskip

A remarkable result is that, by adding variables, we can see $f_1$ as arising in a natural way from a much more elementary function, 
namely a rational function.  Indeed, let us consider the  rational function 
$$
R(x_1,x_2,x_3,x_4) := \frac{2}{2-x_1-x_2} \cdot \frac{2}{2-x_3-x_4} \, .
$$
Then $R$ can be expanded as 
$$
\begin{array}{ll}
R &=\displaystyle  \sum_{(i_1,i_2,i_3,i_4)\in\mathbb N^4} a(i_1,i_2,i_3,i_4) \ x_1^{i_1}x_2^{i_2}x_3^{i_3}x_4^{i_4}\\ \\

&\displaystyle =\sum_{(i_1,i_2,i_3,i_4)\in\mathbb N^4} 2^{-(i_1+i_2+i_3+i_4)}{i_1+i_2\choose i_1} {i_3+i_4 \choose i_3} 
\ x_1^{i_1}x_2^{i_2}x_3^{i_3}x_4^{i_4}\, .
\end{array}
$$
Collecting all the diagonals terms, we easily get that 
$$
\Delta(R):= \sum_{n=0}^{+\infty} a(n,n,n,n)t^n = f_1(t)\, .
$$ 
More formally, given a field $K$ and a multivariate power series 
$$
f(x_1,\ldots,x_n) := \sum_{(i_1,\ldots,i_n)\in\mathbb N^n} a(i_1,\ldots,i_n)x_1^{i_1}\cdots x_n^{i_n}
$$
with coefficients in $K$,  we define the {\it  diagonal} $\Delta(f)$ of $f$  as the one 
variable power series 
$$
\Delta(f)(t) := \sum_{n=0}^{+\infty} a(n,\ldots,n) t^n \in K[[t]]\, .
$$ 
Another classical example which emphasizes the richness of diagonals is the following.  
The power series 
$$
f_2(t) :=  \sum_{n= 0}^{+\infty} \sum_{k=0}^{n} {n\choose k}^2 {n+k \choose k}^2 t^n \in \mathbb Z[[t]]\, ,
$$
is a well-known transcendental $G$-function that appears in Ap\'ery's proof of the irrationality of $\zeta(3)$ (see \cite{Fi}).   
It is also known to satisfies the Picard--Fuchs equation associated with a one-parameter family of $K_3$ surfaces \cite{BP84}.  
Furthermore, a simple computation shows that $f_2$ is the diagonal of the five-variable rational function
$$
 \frac{1}{1-x_1} \cdot \frac{1}{(1-x_2)(1-x_3)(1-x_4)(1-x_5) - x_1x_2x_3}  \in\mathbb Z[[x_1,\ldots,x_5]]\, .
$$

These two examples actually reflect a general phenomenon. 
In the case where $K=\mathbb C$, diagonalization may be nicely visualized thanks to Deligne's formula \cite{De} 
via contour integration over a vanishing cycle.  
Formalizing this in terms of the Gauss--Manin connection and De Rham cohomology groups, and using a deep result of Grothendieck,  
one can prove that the diagonal of any algebraic power series with algebraic coefficients  is a Siegel $G$-function that comes from geometry, 
that is, one which satisfies the Picard--Fuchs type equation associated with some one-parameter family of algebraic varieties \cite{An89,Ch86}.   
As claimed by the  Bombieri--Dwork conjecture, this is a picture expected for all $G$-functions.  
Diagonals of algebraic power series with coefficients in $\overline{\mathbb Q}$ thus appear to be a distinguished class of $G$-functions. 
Originally introduced in the study of Hadamard products \cite{CM}, diagonals have since been studied by many authors and for many different reasons \cite{Ch82,Ch84,Ch86,De,DL,Fur,Li,LvdP,St80,St99}.  

\begin{rem} {\rm The same power series may well 
arise as the diagonal of different rational functions, but it is expected that the 
underlying families of algebraic varieties should be connected in some way, such as {\it via} the existence of some isogenies 
(see the discussion in \cite{Ch86}). For instance, $f_1(t)$ is also the diagonal of the three-variable rational function 
$$\frac{4}{4-(x_1+x_2)(1+x_3)} \, ,$$ 
while $f_2(t)$ is also the diagonal of the six-variables rational function 
$$\frac{1}{(1-x_1x_2)(1-x_3-x_4-x_1x_3x_4)(1-x_5-x_6-x_2x_5x_6)} \, \cdot$$}
\end{rem}

\medskip

When $K$ is a field of positive characteristic, the situation is completely different as shown the following nice result.

\begin{defn} 
\emph{A power series 
$f(x_1,\ldots,x_n) \in K[[x_1,\ldots,x_n]]$ 
is said to be {\it algebraic} if it is algebraic over the field of rational functions $K(x_1,\ldots,x_n)$, 
that is,  if there exist polynomials $A_0,\ldots, A_m\in {K}[x_1,\ldots,x_n]$, not all zero, 
such that
$\sum_{i=0}^{m} A_i(x_1,\ldots,x_n)f(x_1,\ldots,x_n)^i = 0.$ 
The {\it degree} of $f$ is  the minimum of the positive integers $m$ for which such a relation holds.  
The (naive) {\it height} of  $f$ is defined as 
the minimum of the heights of the nonzero polynomials $P(Y)\in K[x_1,\ldots,x_n][Y]$ 
 that vanish at $f$, or equivalently, as the height of the minimal polynomial of $f$. 
 The height of a polynomial $P(Y)\in K[x_1,\ldots,x_n][Y]$ is  
the maximum of the total degrees of its coefficients.  }
\end{defn}

\begin{thm}[Furstenberg--Deligne] \label{theo: FD}
Let $K$ be a field of positive characteristc. Then the diagonal of an algebraic power series in $K[[x_1,\ldots,x_n]]$ is algebraic. 
\end{thm}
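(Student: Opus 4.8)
The plan is to exploit the characteristic-$p$ phenomenon that extracting diagonals commutes nicely with the Frobenius, together with a finiteness argument. Write $K$ for our field of characteristic $p$; by enlarging $K$ we may assume it is perfect, so that the $p$-th power map is a bijection. The key device is the Cartier-type operator: for a power series $g = \sum_{\mathbf{i}} c(\mathbf{i}) x_1^{i_1}\cdots x_n^{i_n} \in K[[x_1,\ldots,x_n]]$ and a vector $\mathbf{e}=(e_1,\ldots,e_n)\in\{0,1,\ldots,p-1\}^n$, define
$$
\Lambda_{\mathbf{e}}(g) := \sum_{\mathbf{j}} c(p\mathbf{j}+\mathbf{e})^{1/p}\, x_1^{j_1}\cdots x_n^{j_n}\, .
$$
The first step is to check that each $\Lambda_{\mathbf{e}}$ maps algebraic power series to algebraic power series, and — crucially — that it does so with controlled degree and height: if $g$ satisfies a polynomial relation of degree $m$ and height $h$, then so does $\Lambda_{\mathbf{e}}(g)$, with bounds on $m$ and $h$ depending only on $m$, $h$, $n$ and $p$. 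This is the standard fact that $K[[x_1,\ldots,x_n]]$ is a free module of rank $p^n$ over the subring $K[[x_1^p,\ldots,x_n^p]]$ with basis the monomials $x_1^{e_1}\cdots x_n^{e_n}$, $\mathbf{e}\in\{0,\ldots,p-1\}^n$; extracting the component along one basis element and taking a $p$-th root is an algebra-compatible operation, and one tracks the effect on a minimal polynomial by clearing denominators and Frobenius-twisting.

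The second step is to realize $\Delta(f)$ as living in the orbit of $f$ under the monoid generated by the $\Lambda_{\mathbf{e}}$, after an elementary preliminary manipulation. The classical trick is: introduce new variables and replace $f(x_1,\ldots,x_n)$ by
$$
F(x_1,\ldots,x_n) := \frac{1}{1-x_n}\, f(x_1 x_n,\ x_2 x_n,\ \ldots,\ x_{n-1}x_n,\ x_n)\, ,
$$
or an analogous construction, which is still algebraic with controlled degree and height, and whose diagonal (in a suitable reduced sense) agrees with $\Delta(f)$ while having the feature that the relevant coefficient-extraction is uniformized by the operators $\Lambda_{\mathbf{e}}$ applied in base $p$. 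Concretely, $\Delta(f)$ can be obtained from $f$ by writing each target index $n$ in base $p$ and peeling off digits one at a time; each digit-peeling is one application of some $\Lambda_{\mathbf{e}}$ followed by specialization, so $\Delta(f)$ is a (possibly infinite) combination of power series all lying in the set
$$
\mathcal{S}(f) := \{\,\Lambda_{\mathbf{e}_k}\cdots\Lambda_{\mathbf{e}_1}(f)\ :\ k\ge 0,\ \mathbf{e}_1,\ldots,\mathbf{e}_k\in\{0,\ldots,p-1\}^n\,\}\, .
$$

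The third and decisive step is the finiteness argument: by the uniform bounds from Step 1, every element of $\mathcal{S}(f)$ is algebraic of degree at most some $M$ and height at most some $H$ depending only on $n$, $\deg f$, $\operatorname{ht} f$ and $p$. But the set of power series in $K[[x_1,\ldots,x_n]]$ that are algebraic of degree $\le M$ and height $\le H$ — while not finite if $K$ is infinite — spans, together with a finiteness property of the digit dynamics, a situation where $\Delta(f)$ satisfies a polynomial equation: one shows that the $K$-vector space (or rather the module over an appropriate Frobenius-twisted structure) generated by $\mathcal{S}(f)$ is finite-dimensional, hence $\Delta(f)$, $\Delta(f)^p$-type expressions, and their images under the $\Lambda$'s satisfy a nontrivial linear dependence, which unwinds to an algebraic equation for $\Delta(f)$ over $K(t)$. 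The main obstacle is precisely this last point — extracting genuine finiteness from the orbit $\mathcal{S}(f)$ when the ground field is infinite, so that one cannot simply count power series. The resolution is to work not with individual series but with the finitely many minimal polynomials up to the action of Frobenius and a bounded monoid, using that there are only finitely many "shapes" (supports and degree patterns) of minimal polynomials of bounded degree and height, and that $K$ being perfect lets one normalize Frobenius twists away; this is the heart of the Furstenberg–Deligne theorem and the place where the quantitative bounds announced in the abstract will come from.
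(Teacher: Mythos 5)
Your toolkit (Cartier operators, the Frobenius decomposition, a finite-dimensionality argument) is the right one, but as written the proposal has two genuine gaps, and the second is fatal. First, the bridge you build in Step 2 between $\Delta(f)$ and the Cartier orbit of $f$ is not correct: applying operators $\Lambda_{\mathbf e}$ to $f$ and then specializing extracts subseries whose exponents lie in prescribed residue classes modulo powers of $p$; this does not assemble the diagonal coefficients $a(m,\ldots,m)$ into $\Delta(f)$, and saying that $\Delta(f)$ is a ``possibly infinite combination'' of orbit elements gives no algebraicity. The correct bridge is the intertwining relation $\Lambda_i(\Delta(g))=\Delta\bigl(\Lambda_{(i,\ldots,i)}(g)\bigr)$: if $W$ is a finite-dimensional $K$-vector space containing $f$ and stable under all $\Lambda_{\mathbf j}$, then $\Delta(W)$ is a finite-dimensional space of one-variable series containing $\Delta(f)$ and stable under the one-variable Cartier operators, and the decomposition $g=\sum_j x^j\Lambda_j(g)^p$ plus linear algebra then yields an algebraic equation for $\Delta(f)$ of degree at most $p^{\dim W}$ (this is exactly Propositions \ref{prop: 1} and \ref{prop: 2} of the paper). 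Second, and this is the heart of the theorem, you never produce such a finite-dimensional invariant space. Your Step 3 tries to extract finiteness from the fact that every element of the orbit is algebraic of bounded degree and height, and you acknowledge the obstacle yourself: over an infinite field, bounded degree and height do not give finitely many minimal polynomials, and algebraicity of each orbit element does not by itself make the $K$-span of the orbit finite-dimensional; ``finitely many shapes of minimal polynomials'' does not close this gap. (Even your Step 1 is under-justified: freeness of $K[[x_1,\ldots,x_n]]$ over $K[[x_1^p,\ldots,x_n^p]]$ alone does not show that $\Lambda_{\mathbf e}$ preserves algebraicity; one needs, e.g., Ore's lemma together with the twisted linearity $\Lambda_{\mathbf j}(g^p h)=g\,\Lambda_{\mathbf j}(h)$.)

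The known ways to close the gap are precisely what is missing from your sketch. Either one invokes Ore's lemma to write $\sum_{k=0}^m A_k f^{p^k}=0$ with polynomial coefficients $A_k$ and uses $\Lambda_{\mathbf j}(g^p h)=g\,\Lambda_{\mathbf j}(h)$ to exhibit an explicit finite-dimensional $\Lambda$-stable space containing $f$ (the route of Sharif--Woodcock and Harase, recalled in Section \ref{sec: strategy}); or, as in this paper, one first reduces to the rational case by showing that $f$ is the half-diagonal $\Delta_{1/2}$ of a rational (Laurent) series $P/Q$ in $2n$ variables --- Furstenberg's formula, made effective in Section \ref{sec: rat} --- for which the space $\{S/Q : \deg S\le \deg Q\}$ is visibly $\Lambda$-stable of dimension independent of $p$, and then applies the intertwining argument above. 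Without one of these mechanisms your proposal stops exactly where the proof has to start.
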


 Furstenberg \cite{Fur} first proved the case  where $f$ is a rational power series and Deligne \cite{De} extended this result to algebraic 
 power series by using tools from arithmetic geometry.  
Some elementary proofs have then been worked out by  Denef and Lipshitz \cite{DL}, Harase \cite{Har88}, 
Sharif and Woodcock \cite{SW}.   
 The present work  
is mainly motivated by the following consequence of Theorem \ref{theo: FD}.   
Given a prime number $p$ and a power series $f(x):= \sum_{n=0}^{+\infty} a(n) x^n \in \mathbb Z[[x]]$, we denote by  
$f_{\vert p}$ the reduction of $f$ modulo $p$, that is 
$$
f_{\vert p}(x) := \sum_{n=0}^{+\infty} (a(n)\bmod{p})  x^n \in  \mathbb F_p[[x]] \,.
$$ 
Theorem \ref{theo: FD} implies that if $f(x_1,\ldots,x_n)\in \mathbb Z[[x_1,\ldots,x_n]]$ is  algebraic over $\mathbb Q(x_1,\ldots,x_n)$, 
then 
$\Delta(f)_{\vert p}$ is algebraic over $\mathbb F_p(x)$ for every  prime $p$.  
In particular, both the transcendental functions $f_1$ and $f_2$ previously mentioned 
have the remarkable property to have algebraic reductions modulo $p$ for every prime $p$.   

\medskip

It now becomes very natural to ask how the complexity of the algebraic function $\Delta(f)_{\vert p}$ may increase when $p$ 
run along the primes.  A common way to measure the complexity of an algebraic power series is to estimate its degree and its height. 
Deligne \cite{De} obtained a first result in this direction by proving that if $f(x,y)\in \mathbb Z[[x,y]]$ is algebraic,  
then, for all but finitely many primes $p$, $\Delta(f)_{\vert p}$ is an algebraic  power series of degree at most $Ap^B$, where $A$ and $B$ 
do not depend on $p$ but only on geometric quantities associated with $f$.    
He also suggested that  a similar bound should hold for the diagonal of algebraic power series in $\mathbb Z[[x_1,\ldots,x_n]]$.    
Our main aim is to provide the following answer to the question raised by Deligne.

\begin{thm}\label{theo: modp}
Let $f(x_1,\ldots,x_n)\in \mathbb Z[[x_1,\ldots,x_n]]$ be an algebraic power series with degree at most $d$ and height at most $h$. 
Then there exists an effective constant $A:=A(n,d,h)$ 
depending only on $n$, $d$ and $h$, such that $\Delta(f)_{\vert p}$ has degree at most $p^A$ and height at most 
$A^2p^{A+1}$, for every prime number $p$. 
\end{thm}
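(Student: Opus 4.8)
The plan is to obtain an \emph{effective} version of the Furstenberg--Deligne theorem over $\mathbb{F}_p$, tracking the dependence of the degree and height of the diagonal on $p$ and on the initial data $(n,d,h)$, and then to lift the statement back from $\mathbb{F}_p$ to $\mathbb{Z}$ in the obvious way: reduction modulo $p$ commutes with diagonalization, so $\Delta(f)_{\vert p} = \Delta(f_{\vert p})$, and $f_{\vert p}$ is an algebraic power series over $\mathbb{F}_p(x_1,\ldots,x_n)$ of degree at most $d$ and height at most $h$ (its minimal polynomial divides the reduction of a minimal polynomial of $f$). Thus everything reduces to a purely characteristic-$p$ statement, and the content of the theorem is a quantitative analysis of one of the known elementary proofs of Theorem \ref{theo: FD} — I would follow the Denef--Lipshitz / Sharif--Woodcock circle of ideas, since those are the most amenable to explicit bookkeeping.

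First I would reduce, by a standard argument, from algebraic power series to rational power series: over $\mathbb{F}_p$, the diagonal of an algebraic $f$ can be written as the diagonal of a rational function in a bounded number of extra variables, with degree and height of the denominator controlled in terms of $d$, $h$ and $n$ (this is essentially Furstenberg's substitution combined with clearing the algebraic relation; one should check the blow-up here is polynomial in $d,h$ and does \emph{not} involve $p$). Next, for a rational power series $P/Q \in \mathbb{F}_p[[x_1,\ldots,x_n]]$ I would use the Frobenius/Cartier-operator description of the diagonal: iterating the $p$-th power Cartier operators $\Lambda_{\mathbf{e}}$ that extract coefficients in prescribed residue classes modulo $p$, one shows that the $\mathbb{F}_p$-vector space (or rather the $\mathbb{F}_p(x_1^p,\ldots)$-module) spanned by the images of $P/Q$ under all finite compositions of these operators is finite-dimensional, and the diagonal lies in a related finitely generated module. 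The key point is to bound the dimension of this module: the Cartier operators send $\{A/Q : \deg A \le \delta\}$ into itself for $\delta$ of size roughly $\deg Q$, independently of $p$, so the module has $\mathbb{F}_p$-dimension at most $p^{C}$ with $C = C(n,d,h)$. From a spanning set of that size one reads off an algebraic equation for $\Delta(f)_{\vert p}$ of degree at most $p^{C}$, and then $A$ is taken to absorb $C$ and a few additive constants.

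The height bound requires a little more care and is where the exponent $A^2 p^{A+1}$ comes from: having produced $\Delta(f)_{\vert p}$ as a component of a vector $v$ satisfying a linear system $v^{(p)} = M v$ over $\mathbb{F}_p[x]$ with $M$ of size $p^{C}\times p^{C}$ and entries of bounded degree (degree $O(1)$ in terms of $d,h,n$, but with denominators that are powers of $Q(x,\ldots,x)$), one eliminates to get a single polynomial relation for $\Delta(f)_{\vert p}$. The height of the eliminant is governed by a determinant of a matrix of size $\sim p^{C}$ whose entries have degree $\sim p \cdot (\text{constant})$ after clearing denominators (each Cartier step can multiply degrees by $p$), giving a total degree bound of the shape $p^{C} \cdot p \cdot (\text{constant}) \le A^2 p^{A+1}$. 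So the quadratic factor $A^2$ and the extra power of $p$ are bookkeeping artifacts of the determinant size times the per-entry degree.

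The main obstacle, and the part that needs genuine care rather than routine estimation, is the algebraic-to-rational reduction together with controlling how the denominator $Q$ and the relevant degree bound $\delta$ depend on $d$ and $h$ \emph{uniformly in $p$}: one must make sure that passing through Furstenberg's trick, clearing the minimal polynomial, and setting up the Cartier module does not secretly introduce a factor depending on $p$ (for instance through separability issues, through the degree of the field extension, or through the need to invert the leading coefficient of the minimal polynomial modulo $p$). Handling the possibly finitely many ``bad'' primes where $f_{\vert p}$ degenerates — by absorbing them into the constant $A$, using that for those $p$ one has the crude bound that \emph{any} algebraic power series over $\mathbb{F}_p$ of the degrees in play already satisfies the asserted inequalities — is the technical glue that upgrades Deligne's ``all but finitely many $p$'' to ``every $p$''.
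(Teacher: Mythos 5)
Your high-level architecture is the same as the paper's (reduce mod $p$, which commutes with $\Delta$; rationalize; find a Cartier-stable space of dimension depending only on $(n,d,h)$; conclude degree $\le p^A$ and height $\le A^2p^{A+1}$), and the Cartier-operator part of your sketch is essentially Propositions \ref{prop: 1} and \ref{prop: 2} (though note the paper gets the height from an $r\times r$ system $\mathbf{v}(x)=A(x)\mathbf{v}(x^p)$ with $r\le A$ and entries of degree $\le p-1$, iterated $r+1$ times, not from a $p^{C}\times p^{C}$ determinant). The genuine gap is the step you dismiss as ``a standard argument'': the effective rationalization. Furstenberg's substitution requires $\partial P/\partial y$ not to vanish at the origin, and ``clearing the algebraic relation'' does not fix this in several variables; the known remedy of Denef--Lipshitz (Henselization, locally standard \'etale extensions) is precisely the ineffective step, and making it effective is the entire content of Sections \ref{sec: laurent}--\ref{sec: rat} of the paper. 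There the problem is handled by localizing in the last variable, i.e.\ by working in the field of multivariate Laurent series $K\langle\langle x_1,\ldots,x_n\rangle\rangle$, peeling off finitely many coefficients controlled by the resultant of $P$ and $\partial P/\partial y$, applying Furstenberg's lemma to the remainder, and inducting on $n$; the resulting rational function is in general only a Laurent series, not an element of $K[[x_1,\ldots,x_{2n}]]$, which is why the Cartier/diagonal formalism has to be set up on these Laurent fields in the first place (with the degree/height bookkeeping of Lemmas \ref{lem: bound}--\ref{lem: weirdbounds}, \ref{lem: diag}, \ref{lem: XX}). Your proposal contains no substitute for this, and your quantitative guess (``polynomial in $d,h$'') is also off -- the bounds that come out are doubly exponential in $h,d$ -- though that does not affect the truth of the statement.

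A second, smaller problem is your closing paragraph on ``bad primes''. The paper's argument is uniform in $p$: there are no exceptional primes to absorb, and your proposed fallback is circular, because for an exceptional prime $p_0$ the degree of $\Delta(f_{\vert p_0})$ is exactly what one is trying to bound; there is no ``crude bound'' of the form $p_0^{A(n,d,h)}$ available (Harase's general bound is $p^{p^{A}}$, which is too weak), and a constant absorbing data of specific primes attached to $f$ would no longer depend only on $(n,d,h)$. If your main argument needed to exclude finitely many primes, the theorem as stated would not follow; so you should aim, as the paper does, for a rationalization and a Cartier-stable space whose dimension bound involves $p$ nowhere.
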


Theorem \ref{theo: modp} is derived from
the following quantitative version of the Furstenberg--Deligne theorem.

\begin{thm}\label{theo: main}
Let $K$ be a field of characteristc $p>0$ and let $f$ be an algebraic power series in 
$K[[x_1,\ldots,x_n]]$ of degree at most $d$ and height at most $h$. Then there exists an effective constant $A:=A(n,d,h)$ 
depending only on $n$, $d$ and $h$, such that $\Delta(f)$ has degree at most $p^A$ and height at most 
$A^2p^{A+1}$.
\end{thm}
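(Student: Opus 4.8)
The plan is to make every step in one of the known elementary proofs of the Furstenberg--Deligne theorem effective, keeping careful track of how the degree and height of the auxiliary algebraic functions grow. I would follow the route through Furstenberg's original argument together with the Denef--Lipshitz / Sharif--Woodcock formalism, which reduces the problem to understanding the action of Cartier-type operators on a finite-dimensional vector space over $\KK$ of rational functions (or $p$-th power combinations thereof).

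First I would reduce the algebraic case to a rational function in more variables. More precisely, if $f(x_1,\ldots,x_n)$ is algebraic of degree $\le d$ and height $\le h$, one shows (following Furstenberg, and making the construction quantitative) that $\Delta(f)$ can be written as $\Delta(g)$ for a rational function $g = P/Q$ in finitely many variables --- roughly $2n$ or $n+1$ of them --- where $\deg P$, $\deg Q$ are bounded by an explicit polynomial expression in $n,d,h$. The key identity here is that a coefficient of $f$ can be extracted as a residue, i.e. as a constant term of a rational function involving $\partial_Y$ of the minimal polynomial; one must check that clearing denominators and homogenizing only inflates the degrees in a controlled way, and that the new denominator $Q$ has nonzero constant term (or can be arranged to, after a change of variables), so that $g$ is a genuine power series.

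Next I would analyse $\Delta(g)$ for $g=P/Q$ a rational function over $K$, $\operatorname{char}K = p$. The diagonal operator $\Delta$ is built from the "sections" or Cartier operators $\Lambda_{\mathbf e}$ defined by $\Lambda_{\mathbf e}\big(\sum a_{\mathbf i} x^{\mathbf i}\big) = \sum a_{p\mathbf i + \mathbf e}^{1/p}\, x^{\mathbf i}$ (or their $p$-power-twisted analogues so as to stay inside $K$). The crucial finiteness input is that the $K$-vector space spanned by all iterates $\Lambda_{\mathbf e_k}\cdots\Lambda_{\mathbf e_1}(P/Q)$ is contained in the finite-dimensional space $V := \{ R/Q : \deg R < \deg Q \text{ in each relevant variable}\}$, because each $\Lambda_{\mathbf e}$ maps $V$ into $V$: writing $P/Q = P\cdot Q^{p-1}/Q^p$ and using $\Lambda_{\mathbf e}(A^p B) = A\,\Lambda_{\mathbf e}(B)$, the operator sends $R/Q \mapsto \Lambda_{\mathbf e}(R Q^{p-1})/Q$, and $\deg(RQ^{p-1})$ stays in the allowed box after dividing by $p$. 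Counting lattice points gives $\dim_K V \le (\deg Q + 1)^{m}$ where $m$ is the number of variables, hence $\dim_K V \le p^{A}$ once $\deg Q \le$ (something)$\,\cdot p^{?}$ --- but in fact $\deg Q$ is a constant in $n,d,h$, so $\dim_K V$ is bounded by a constant $C(n,d,h)$, \emph{not} growing with $p$; the factor $p^A$ in the statement comes in only through the final algebraicity bound below.

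Finally, from the invariance of $V$ one extracts an algebraic equation for $\Delta(f)$. The standard trick (Christol / Sharif--Woodcock): the orbit of $\Delta(g)$ under the single operator obtained by iterating $\Delta$-type sections $p$ times lies in $V$, so $\Delta(f)$, $\Delta(f)^{p}, \Delta(f)^{p^2},\ldots, \Delta(f)^{p^{N}}$ become $K$-linearly dependent over $K(t)$ for $N \approx \dim_K V$, which yields a nonzero additive ($p$-)polynomial $\sum_{j} c_j(t)\, Y^{p^{j}} = 0$ satisfied by $\Delta(f)$. Its degree in $Y$ is then $p^{N} \le p^{C(n,d,h)}$, giving the degree bound with $A = C(n,d,h)$; tracking the degrees of the coefficients $c_j(t)\in K(t)$ through Cramer's rule (determinants of size $\le \dim_K V$ with entries of bounded degree, coming from the matrices of the $\Lambda_{\mathbf e}$ on the basis of $V$) yields the height bound $A^2 p^{A+1}$. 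I expect the main obstacle to be precisely this bookkeeping: choosing the normalization of $V$ (which monomial box, how to handle the $p$-th power twists so everything stays over the non-perfect field $K$ rather than $K^{1/p}$) so that (a) $V$ is genuinely stable under all the needed operators with \emph{no} loss, and (b) the degree bounds on the matrix entries, and hence on the Cramer determinants, come out as a clean polynomial in $p$ of the advertised shape --- in particular getting the exponent to be exactly $A+1$ and the constant $A^2$ rather than something worse will require the sections to be set up economically and the reduction in the first step to not introduce spurious powers of $p$.
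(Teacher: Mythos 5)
Your Steps~2 and~3 are essentially the paper's own argument (Propositions \ref{prop: 1} and \ref{prop: 2} and the proof of Theorem \ref{theo: mainbis}): once $\Delta(f)=\Delta(R)$ for a rational function $R=P/Q$ of height bounded in terms of $n,d,h$ only, the space $V=\{S/Q:\deg S\le N\}$ is Cartier-stable of dimension $\binom{N+2n}{N}$ independent of $p$, and the Frobenius-linear dependence plus Cramer-type determinant bounds give exactly the advertised $p^A$ and $A^2p^{A+1}$. The genuine gap is in your Step~1, the effective rationalization, which is precisely the hard part of the paper. You assert that after clearing denominators one can arrange the denominator to have nonzero constant term, ``or can be arranged to, after a change of variables,'' so that the Furstenberg-type rational function is a genuine power series. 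This is exactly the point that fails in several variables: Furstenberg's formula (Proposition \ref{prop: Fur}) needs $\partial P/\partial y$ not to vanish at the origin, and the paper's two-variable example $f=\sqrt{y^2-3y^3+4yx-12y^2x+4x^2-12yx^2}$ in Section \ref{sec: Fur} shows this can fail with no evident fix inside $K[[x_1,\ldots,x_n]]$; moreover a change of variables does not commute with the diagonal operator, so it cannot be used to repair the hypothesis, and the known remedy of Denef--Lipshitz (Henselization, locally standard \'etale extensions) is exactly the non-effective step one is trying to avoid.

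What replaces your parenthetical in the paper is an entire apparatus: one leaves the power series ring and works in the iterated Laurent series field $\KK=\KKnn((x_n))$, writes $f=\sum_{i\le r}f_ix_n^i+x_n^rg_r$ where $r$ is controlled by the $x_n$-order of the resultant of $P$ and $\partial P/\partial y$ (so $r\le h(2d-1)$, Lemma \ref{lem: diag}), applies Furstenberg's lemma to the tail $g_r$ over the coefficient field $\KKnn$, handles the initial coefficients $f_i$ (which are algebraic Laurent series in $n-1$ variables, with degree and height controlled by Lemma \ref{lem: weirdbounds}) by induction on $n$, and then recombines via Lemma \ref{lem: XX} and Theorem \ref{theo:mainrat}. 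In particular the rational function $R$ one obtains lives in $\KKxy$ but generally not in the power series ring, so the Cartier/diagonal machinery of Propositions \ref{prop: 1}--\ref{prop: 2} must itself be set up for Laurent series, which the paper does in Sections \ref{sec: laurent} and \ref{sec: cartier}. Without this (or some other effective substitute, e.g.\ an effective local resolution argument, which is problematic in positive characteristic), your first step does not go through; also, for what it is worth, the resulting height bounds are towers of exponentials in $d,h$ rather than the polynomial growth you anticipate, though that does not affect the truth of the statement, which only asks for an effective constant. Your concern about staying over a non-perfect $K$ is a non-issue: the paper simply passes to the perfect closure at the start of the proof of Theorem \ref{theo: mainbis}.
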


Note that, given a power series $f\in\mathbb Z[[x]]$, the degree and the height of $f_{\vert p}$ are always at most 
equal to those of $f$. Furthermore, 
diagonalization and reduction modulo $p$ commute, that is 
$\Delta(f)_{\vert p} = \Delta(f_{\vert p})$. This shows that Theorem \ref{theo: main} implies Theorem \ref{theo: modp}.

\begin{rem}\emph{ 
Note that if $f(x)\in \mathbb Z[[x]]$ satisfies a nontrivial 
polynomial relation of the form $P(f) \equiv 0 \bmod p$, then we also have the following nontrivial polynomial relation 
$P^m(f) \equiv 0 \bmod \mathbb Z/ p^m\mathbb Z$. 
Thus if a power series is algebraic modulo $p$ with degree at most $d$ and height at most $h$, it is also algebraic 
modulo $p^m$ for every positive integer $m$ with degree at most $md$ and height at most $mh$. 
Under the assumption of Theorem \ref{theo: modp}, we thus have that $f_{\vert p^m}$ is an algebraic power series of degree at most 
$mp^{A}$ and height at most $mA^2p^{A+1}$ for every prime $p$ and every positive integer $m$. 
}
\end{rem}

Theorem \ref{theo: main} actually implies a more general statement given in Theorem \ref{theo: modpbis} below. 
We recall that a ring $R$ is a
\emph{Jacobson} ring if every prime $\mathfrak P\in {\rm Spec}(R)$ is the intersection of the
maximal ideals above it.  
The general form of the Nullstellensatz states that if $S$
is a Jacobson ring and $R$ is a finitely generated $S$ algebra, then $R$ is a
Jacobson ring and each maximal ideal $\mathfrak M$ in $R$ has the property that
$\mathfrak M':=S\cap \mathfrak M$ is a maximal ideal of $S$ and, moreover, $R/\mathfrak M$ is a finite extension
of $S/\mathfrak M'$ (see \cite[Theorem 4.19]{Eis95}). 
Let $R$ be a finitely generated $\mathbb Z$-algebra 
and let $f(x):= \sum_{n\in\mathbb N} a(n) x^n \in  R[[x]]$.   Since
$\mathbb{Z}$ is a Jacobson ring, we have that $R$ is also a Jacobson ring and 
every maximal ideal $\mathfrak M$ of $R$ has the property that $R/\mathfrak M$ is a finite field.   
In particular, if $\mathfrak M$ is a maximal ideal of $R$ and $f(x)=\sum_{n=0}^{+\infty} a(n) x^n \in R[[x]]$, the power series 
$$
f_{\vert \mathfrak M} := \sum_{n=0}^{+\infty} (a(n) \bmod \mathfrak M)x^n
$$ 
has coefficients in the finite field $R/\mathfrak M$.

\begin{thm}\label{theo: modpbis}
Let $K$ be a field of characteristic $0$ and $f(x_1,\ldots,x_n)\in  K[[x_1,\ldots,x_n]]$ 
be an algebraic power series with degree at most $d$ and height at most $h$. 
Then there exists a finitely generated $\mathbb Z$-algebra $R\subseteq K$ such that $\Delta(f)\in R[[x]]$. 
Furthermore, there exists an explicit constant $A:=A(n,d,h)$ 
depending only on $n$, $d$ and $h$, such that, for every maximal ideal $\mathfrak M$ of $R$,  
$\Delta(f)_{\vert \mathfrak M}$ 
is an algebraic power series of degree at most $p^A$ and height at most 
$A^2p^{A+1}$, where $p$ denotes the characteristic of the finite field $R/\mathfrak M$.   
\end{thm}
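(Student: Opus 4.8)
The plan is to deduce Theorem~\ref{theo: modpbis} from the quantitative Furstenberg--Deligne theorem (Theorem~\ref{theo: main}) by a specialization argument. Let $P(x_1,\dots,x_n,Y)=\sum_{i=0}^{m}A_i(x_1,\dots,x_n)Y^i$ be the minimal polynomial of $f$ over $K(x_1,\dots,x_n)$, taken primitive in $K[x_1,\dots,x_n][Y]$, so that $m\le d$, the total degrees of the $A_i$ are at most $h$, and $P(x_1,\dots,x_n,f)=0$. Since $K(x_1,\dots,x_n)$ has characteristic $0$ it is perfect, hence $P$ is separable and $f$ is a simple root of $P$; this is the point that makes it possible to control the coefficients of $f$.

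The first step is a descent to a finitely generated $\mathbb Z$-algebra. Let $R_0$ be the $\mathbb Z$-subalgebra of $K$ generated by the finitely many coefficients of the $A_i$; it is finitely generated and $P\in R_0[x_1,\dots,x_n][Y]$. I claim there is a finitely generated $\mathbb Z$-subalgebra $R$ of $K$ containing $R_0$ with $f\in R[[x_1,\dots,x_n]]$; after inverting one fixed nonzero coefficient of $P$ we may moreover assume that some coefficient of $P$ is a unit of $R$. To establish the claim one exploits separability: the derivations $\partial/\partial x_j$ of $K(x_1,\dots,x_n)$ extend uniquely to $L:=K(x_1,\dots,x_n)(f)$, so every iterated partial derivative $\partial^{\beta}f$ lies in $L$ and, written in the basis $1,f,\dots,f^{m-1}$, has coefficients whose denominators are powers of a fixed nonzero $D\in R_0'[x_1,\dots,x_n]$ (a power of $\mathrm{disc}_Y P$, up to the leading coefficient of $P$) and whose numerators lie in $R_0'[x_1,\dots,x_n]$ for some finitely generated $R_0'\supseteq R_0$. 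When $D(0,\dots,0)\neq 0$ one reads off from $a(\beta)=\frac{1}{\beta!}(\partial^{\beta}f)(0,\dots,0)$ that each Taylor coefficient of $f$ lies in $R_0'[1/D(0,\dots,0),f(0,\dots,0)]$, and Eisenstein's theorem guarantees that the factorials $\beta!$ contribute no new denominators, so that $f$ is defined over a finitely generated $\mathbb Z$-algebra. The remaining case, in which $\mathrm{disc}_Y P$ vanishes at the origin (which does occur, e.g. for $f=x\sqrt{1+x}$, with minimal polynomial $Y^2-x^2(1+x)$), is reduced to this one, or deduced from the fact that the ring of algebraic power series over $K$ in $x_1,\dots,x_n$ is the filtered union of those over the finitely generated $\mathbb Z$-subalgebras of $K$ (a form of Artin approximation). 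I expect this descent to be the main obstacle; everything that follows is formal.

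Fix such an $R$. As $R$ is a finitely generated $\mathbb Z$-algebra it is a Jacobson ring, and by the Nullstellensatz every maximal ideal $\mathfrak M$ of $R$ has a finite residue field $R/\mathfrak M$, of some characteristic $p$, exactly as recalled before the statement. Since $f\in R[[x_1,\dots,x_n]]$, the diagonal $\Delta(f)$ has all its coefficients in $R$, which already proves the first assertion, and for every maximal ideal $\mathfrak M$ one has $\Delta(f)_{\vert\mathfrak M}=\Delta(f_{\vert\mathfrak M})$, both series being obtained by reducing the diagonal Taylor coefficients of $f$ modulo $\mathfrak M$. Reducing the relation $P(x_1,\dots,x_n,f)=0$ modulo $\mathfrak M$ yields $\bar P(x_1,\dots,x_n,f_{\vert\mathfrak M})=0$ with $\bar P\in (R/\mathfrak M)[x_1,\dots,x_n][Y]$; since a chosen coefficient of $P$ is a unit of $R$, $\bar P$ is nonzero, its $Y$-degree lies between $1$ and $m\le d$, and its coefficients have total degree at most $h$. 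Hence $f_{\vert\mathfrak M}$ is an algebraic power series over the field $R/\mathfrak M$ of characteristic $p$, of degree at most $d$ and height at most $h$. Applying Theorem~\ref{theo: main} to $f_{\vert\mathfrak M}$ gives the effective constant $A:=A(n,d,h)$ such that $\Delta(f_{\vert\mathfrak M})$ has degree at most $p^{A}$ and height at most $A^{2}p^{A+1}$; since $\Delta(f)_{\vert\mathfrak M}=\Delta(f_{\vert\mathfrak M})$ and $A$ depends only on $n$, $d$, $h$, this is the desired conclusion.
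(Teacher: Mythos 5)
The formal half of your argument is correct and is exactly how the paper intends the deduction from Theorem~\ref{theo: main}: $R$ is a Jacobson ring with finite residue fields, one normalizes the minimal polynomial so that some coefficient is a unit of $R$, whence $f_{\vert \mathfrak M}$ is algebraic over $R/\mathfrak M$ of degree at most $d$ and height at most $h$, and $\Delta(f)_{\vert \mathfrak M}=\Delta(f_{\vert \mathfrak M})$ then gives the bounds. The gap is in the descent step, which you yourself flag as the main obstacle and which is the only non-formal content of Theorem~\ref{theo: modpbis}. In the nondegenerate case your route through $a(\beta)=\frac{1}{\beta!}(\partial^{\beta}f)(0,\ldots,0)$ leaves the factorials unaccounted for: an element of a finitely generated $\mathbb Z$-algebra divided by $\beta!$ need not lie in any fixed finitely generated ring, and ``Eisenstein's theorem'' in the generality you invoke (several variables, coefficients in an arbitrary finitely generated domain rather than a ring of integers of a number field) is essentially the statement you are trying to prove, so the appeal is circular; the standard way to avoid this is not to differentiate at all but to compute the coefficients by the Henselian (implicit function) recursion, in which one divides only by the fixed unit $\frac{\partial P}{\partial Y}(0,\ldots,0,f(0,\ldots,0))$. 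More seriously, the degenerate case, where $\mathrm{disc}_Y P$ vanishes at the origin, is not proved at all: ``is reduced to this one, or deduced from \dots a form of Artin approximation'' is a pointer, not an argument, and in several variables this reduction is precisely the delicate point (it is the difficulty discussed at length in Section~\ref{sec: Fur}); the filtered-union description of the ring of algebraic power series via the Henselization would indeed do the job, but it needs to be stated and justified, not merely named.

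The paper closes exactly this gap with a short argument you could adopt verbatim: by the (qualitative, non-effective) theorem of Denef and Lipshitz, $f=\Delta_{1/2}(P/Q)$ for a rational power series in $2n$ variables, where one may assume $Q(0,\ldots,0)=1$; writing $Q=1-U$ with $U$ in the ideal $(x_1,\ldots,x_{2n})$ and expanding $P/Q=\sum_{k\ge 0}PU^{k}$ shows that every coefficient of $P/Q$, hence every coefficient of $f$ and of $\Delta(f)$, lies in the $\mathbb Z$-algebra generated by the finitely many coefficients of $P$ and $U$. Enlarging this $R$ by the (normalized) coefficients of the minimal polynomial of $f$, the rest of your argument goes through unchanged and agrees with the paper's proof.
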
 

\begin{rem}\emph{  
If $R$ is a finitely generated $\mathbb{Z}$-algebra, then for all but
finitely many primes $p$, the ideal $pR$ is proper and hence there are maximal
ideals $M$ such that $R/M$ is a finite field of characteristic $p$.  This follows
from a result of Roquette \cite{Ro} stating that the units group of a finitely
generated commutative $\mathbb{Z}$-algebra that is a domain is a finitely generated abelian group, while 
distinct primes $p_1,\ldots ,p_k$ generate a free abelian subgroup of $\mathbb{Q}^*$ of rank $k$. It
follows that if $K$ is a field of characteristic $0$ and $f(x_1,\ldots,x_n)\in  K[[x_1,\ldots,x_n]]$ is algebraic then, for almost all primes 
$p$, it makes sense to reduce $\Delta(f)$ modulo $p$ and Theorem \ref{theo: modpbis} applies.  }
\end{rem}

Regarding Theorem \ref{theo: modp}, one may reasonably ask about the strength of an upper bound of type $p^{A}$. 
For instance, it is not too difficult to see that both $f_1$ and $f_2$ have degree at most $p-1$ when reduced modulo $p$. 
Curiously enough, we do not find any trace in the literature of any explicit power series $f(x)\in \mathbb Z[[x]]$ 
known to be the diagonal of an algebraic power series and for which the ratio $\deg(f_{\vert p})/p$ is known to be unbounded.    
In general, it seems non-trivial, given the diagonal of a rational function, to get a lower bound for the degree of its reduction modulo $p$.  
As a companion to Theorem \ref{theo: modp}, we prove the following result that provides explicit examples of diagonals 
of rational functions whose reduction modulo $p$ have rather high degree. 

\begin{thm}\label{thm: pA}
Let $s$ be a positive integer and let
$$R_{s} \ : = \  \sum_{r=6}^{6+s-1}  \frac{1}{1-(x_1+\cdots+ x_r)}  \in \mathbb Z[[x_1,\ldots,x_{6+s-1}]] \, .$$ 
Then  $\Delta(R_s)_{\vert p}$ is an algebraic power series of degree at least $p^{s/2}$ for all but finitely many prime numbers $p$.  

\medskip

In particular, for any positive number $N$,  there exists a diagonal of a rational function with integer coefficients 
whose reduction modulo $p$ has degree at least $p^{N}$ 
for all but finitely many prime numbers $p$.
\end{thm}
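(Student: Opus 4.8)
The plan is to compute $\Delta(R_s)$ explicitly and then analyze the size of its reduction modulo $p$ using the algebraic structure of the relevant power series in characteristic $p$. First I would observe that for a single rational function $1/(1-(x_1+\cdots+x_r))$, the diagonal coefficient counting words of length $rn$ that contribute equally to each variable is the multinomial coefficient $\binom{rn}{n,\ldots,n}$, so
$$
\Delta\!\left(\frac{1}{1-(x_1+\cdots+x_r)}\right)(t) \;=\; \sum_{n=0}^{\infty} \frac{(rn)!}{(n!)^r}\, t^n \;=:\; F_r(t)\, .
$$
Since the diagonal operator is linear, $\Delta(R_s) = \sum_{r=6}^{6+s-1} F_r$. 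The key arithmetic input is Furstenberg's classical description (or the Christol--Kamae--Mend\`es France--Rauzy type criterion, see \cite{DL}) of the algebraicity of such hypergeometric series modulo $p$: the sequence $\binom{rn}{n,\ldots,n} \bmod p$ is $p$-automatic, and via Lucas' theorem its values are controlled by the base-$p$ digits of $n$ together with the carry pattern. The algebraic degree of $F_r \bmod p$ is governed by the size of the $p$-kernel of this automatic sequence.

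The main step is a lower bound on the degree of $\Delta(R_s)_{\vert p}$. Here I would use that if $g_1,\ldots,g_s$ are algebraic power series over $\mathbb F_p(x)$ lying in distinct field extensions whose "interaction" is controlled — more precisely, if one can show the fields $\mathbb F_p(x)(F_r \bmod p)$ for $r = 6,\ldots,6+s-1$ are linearly disjoint, or at least that the compositum has degree growing multiplicatively — then $\deg(\sum F_r) = \prod \deg(F_r \bmod p)$ up to controlled factors, or at worst a product over a large subfamily. To make this precise I would pick out, for each $r$, a specific carry pattern in the base-$p$ addition of $r$ copies of a digit string that forces $F_r \bmod p$ to satisfy no low-degree polynomial relation; the Cartier operator (or the $p$-th power Frobenius "sections" $\Lambda_j$ used in the Furstenberg--Deligne machinery of the paper) acts on $F_r \bmod p$ and the orbit has size at least linear in $p$ for each fixed $r\ge 2$, and combining $s$ such factors pairwise-coprimely yields the bound $p^{s/2}$: grouping the $s$ values of $r$ into $\lfloor s/2\rfloor$ pairs, each pair contributes a factor at least $p$ to the degree of the compositum because within a pair one can exhibit a relation of degree exactly $p$ and no smaller. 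This is why the exponent is $s/2$ rather than $s$ — the crude pairing argument only extracts one factor of $p$ per two summands.

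I expect the main obstacle to be precisely the lower bound on the compositum degree: showing that the $F_r \bmod p$ do not satisfy unexpected low-degree algebraic relations among themselves, so that the degrees genuinely multiply (or multiply over a positive-density subfamily). Upper bounds of the form $p^A$ are automatic from Theorem \ref{theo: main}, but lower bounds for degrees of reductions of diagonals are, as the text itself emphasizes, generally delicate. The cleanest route is likely to reduce everything to a single well-chosen evaluation or specialization: substitute $x_i \mapsto $ suitable monomials to collapse $\Delta(R_s)_{\vert p}$ to a one-variable series whose algebraicity degree can be bounded below by an explicit automaton lower bound (the minimal automaton for the relevant $p$-automatic sequence has at least $p^{s/2}$ states for all large $p$), and then invoke the fact that specialization cannot increase the degree. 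Finally, the last assertion of the theorem is immediate: given $N$, take $s = \lceil 2N \rceil$.
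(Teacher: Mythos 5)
Your opening computation is the same as the paper's: $\Delta(R_s)=\sum_{r=6}^{6+s-1}f_r$ with $f_r(x)=\sum_{n\ge 0}\frac{(rn)!}{n!^r}x^n$, and these series have the Lucas property, so $f_r(x)\equiv A_r(x)f_r(x^p)\pmod p$. But from that point on the proposal does not contain a proof of the lower bound, which is the entire content of the theorem. The multiplicativity-of-degrees strategy is only asserted: you never prove that any single $f_r\bmod p$ has degree growing with $p$, never prove linear disjointness of the extensions $\mathbb{F}_p(x)(f_r\bmod p)$, and even granting a large compositum you give no argument that the particular element $\sum_r f_r$ does not lie in a small subfield. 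The ``pairing'' explanation of the exponent $s/2$ (each pair of summands contributes a factor $p$ because ``one can exhibit a relation of degree exactly $p$ and no smaller'') is not backed by any construction and does not correspond to a real mechanism. In the paper the exponent $s/2$ arises quite differently, from a counting argument (Lemma \ref{lem: 1}): if $\deg(f_{\vert p})<p^{s/2}$, then the at least $p^{s/2}$ powers $f^{i_0+i_1p+\cdots+i_{s-1}p^{s-1}}$ with digits $i_j<\sqrt p$ are linearly dependent over $\mathbb{F}_p(x)$, and the Lucas relations convert this into a nonzero polynomial relation $Q(x,f_6,\ldots,f_{6+s-1})\equiv 0\pmod p$ of total degree at most $\sqrt p\,s$; Lemmas \ref{lem: 2}, \ref{cor: 2} and \ref{lem: r} then turn such relations, valid for infinitely many $p$, into a nontrivial $\mathbb{Q}$-linear combination of the logarithmic derivatives $f_r'/f_r$ lying in $\mathbb{Q}(x)$; and this is finally excluded by an archimedean argument (Stirling asymptotics, Pringsheim's theorem, the singularity of $f_r'/f_r$ at $x=1/r^r$ while $f_r$ and $f_r'$ remain bounded there --- this is precisely where the hypothesis $r\ge 6$ enters). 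No analogue of this analytic input, nor any use of the restriction $r\ge 6$, appears in your sketch, so the crucial non-relation statement is simply missing.

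The fallback route you suggest also cannot work as stated. A lower bound of $p^{s/2}$ on the number of states of the minimal $p$-automaton does not imply degree at least $p^{s/2}$: the passage from degree and height of an algebraic series over $\mathbb{F}_p(x)$ to automaton size is exponentially lossy (a series of modest degree but height comparable to a power of $p$ can already require very many states), so large automata are compatible with degree far below $p^{s/2}$. Moreover $\Delta(R_s)_{\vert p}$ is already a one-variable series, so the proposed specialization step has nothing to act on. In short, the proposal reproduces the paper's (easy) first step but leaves the genuine difficulty --- ruling out unexpected low-degree relations modulo $p$ among the $f_r$, which the paper achieves via the Lucas-property machinery together with an asymptotic/singularity analysis of the $f_r'/f_r$ --- unaddressed.
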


This shows that the upper bound obtained in Theorem \ref{theo: modp} is ``qualitatively best possible". 
Of course, we do not claim that the dependence in  $n$, $d$ and $h$ of the huge constant $A(n,d,h)$ that can be extracted from 
the proof of Theorem \ref{theo: modp} is optimal: this is not the case. 

\medskip

The outline of the paper is as follows. In Section \ref{sec: strategy}, we describe the strategy of the proof of Theorems 
\ref{theo: modp}, \ref{theo: main} and \ref{theo: modpbis}.  In Section \ref{sec: Fur}, we explain why we will have to work with fields of multivariate 
Laurent series and not only with ring of multivariate power series.  
Such fields are introduced in Section \ref{sec: laurent} where estimates about height and degree of algebraic Laurent series are obtained.  
Sections \ref{sec: cartier}, \ref{sec: rat} and \ref{sec: main} are devoted to 
the proof of Theorem \ref{theo: main}.  We prove Theorem \ref{thm: pA} in Section \ref{sec: high}.  
We discuss some connections of our results to enumerative combinatorics, automata theory and decision problems in Section \ref{sec: decid}. Finally, 
we remark in Section \ref{sec: alg} that our proof of Theorem \ref{thm: pA} incidentally provides a result about algebraic independence of 
$G$-functions satisfying the so-called Lucas property. The latter result is of independent interest and we plan to return to this question in the future. 
\section{Strategy of proof }\label{sec: strategy}

In this section, we briefly describe the main steps of the proof of Theorem \ref{theo: main}.

\medskip

Throughout this section, we let $p$ be a prime number, we let $K$ be a field  of characteristic $p$, and we let
$$f(x_1,\ldots,x_n) := \displaystyle\ \sum_{(i_1,\ldots,i_n)\in \mathbb{\mathbb N}^n} a(i_1,\ldots,i_n)x_1^{i_1}\cdots x_n^{i_n}$$
be a multivariate formal power series with coefficients in $K$ that is algebraic over the field $K(x_1,\ldots,x_n)$.   
We assume that $f$ has degree at most $d$ and height at most $h$. 
Our goal is to estimate the degree of the diagonal of $f$ with respect to $p$. Note that without loss of generality, 
we can assume that $K$ is a perfect field, which means that the map  $x\mapsto x^p$ is surjective on $K$. 

\medskip

\noindent{\bf Step 1 (Cartier operators).} The first idea is to consider a family of operators from $K[[x_1,\ldots,x_n]]$ 
into itself usually referred to as Cartier operators and which are well-known to be relevant in this framework (see  for instance 
\cite{CKMR,SW,Har88,AdBe}). 
 Let 
$$g(x_1,\ldots,x_n) := \ \sum_{(i_1,\ldots,i_n)\in \mathbb{\mathbb N}^n} b(i_1,\ldots,i_n)x_1^{i_1}\cdots x_n^{i_n}$$  
be an element of $K[[x_1,\ldots,x_n]]$.  
For all ${\bf j}:=(j_1,\ldots,j_n)\in \Sigma_p^n:=\{0,1,\ldots ,p-1\}^n$,  
we define the \emph{Cartier operator} $\Lambda_{\bf j}$ from $K[[x_1,\ldots,x_n]]$ into itself by 
\begin{equation}\label{AB:equation:EJ}
\Lambda_{\bf j}(g)\ := \ \sum_{(i_1,\ldots,i_n)\in \mathbb{N}^n} b(pi_1+j_1,\ldots,pi_n+j_n)^{1/p}x_1^{i_1}\cdots x_n^{i_n} \, .
\end{equation}  
Let us denote by $\Omega_n$, or simply $\Omega$ if there is no risk of confusion,  
the monoid generated by the Cartier operators under composition. 

\medskip

In Section \ref{sec: cartier}, we show that the degree (resp. the height) of 
$\Delta(f)$ can be bounded by $p^{N}$  (resp. $N^2p^{N+1}$) 
if one is able to find a $K$-vector space contained in $K[[x_1,\ldots,x_n]]$ of dimension $N$,  
containing $f$ and  invariant under the action of Cartier operators.  This is the object of Propositions \ref{prop: 1} and \ref{prop: 2}.   
In order to prove Theorem \ref{theo: main}, it will thus be enough to exhibit a $K$-vector space 
$V$ such that the following hold.

\medskip

\begin{itemize}

\item[{\rm (i)}] The power series $f$ belong to $V$. 

\medskip

\item[{\rm (ii)}] The vector space $V$ is invariant under the action of $\Omega$.   

\medskip

\item[{\rm (iii)}] The vector space $V$ has finite dimension $N$ that only depends only on $n$, $d$ and $h$.

\end{itemize}

\begin{rem}{\rm  The more natural way to construct an $\Omega$-invariant 
$K$-vector space containing $f$ is to use Ore's lemma, that is, 
to start with the existence of a relation of the form 
$$
\sum_{k=0}^{m} A_kf^{p^k} = 0 \, ,
$$ 
where the $A_k$'s are polynomials. 
This classical approach is for instance used in \cite{CKMR,SW,Har88,AdBe}.  
Furthermore, it can be made explicit in order to bound the dimension of the $\Omega$-invariant vector space one obtains 
(see for instance \cite{Har88,AdBe}). Unfortunately the bound depends on $p$.  
This attempt to answer Deligne's question can be found in \cite{Har88} where Harase   
proved that there exists a number $A$, 
depending on $n$, $d$ and $h$, such that   $\Delta(f)$ has degree at most 
$p^{p^A}$. }
\end{rem}

\medskip

\noindent{\bf Step 2 (The case of rational functions)}.  In the special case where $f$ is a rational function, 
we are almost done for it is easy to construct a $K$-vector space $V$ satisfying (i), (ii) and (iii). 
Indeed, if $f$ is a rational power series there exist two polynomials $A$ and $B$ in 
$K[x_1,\ldots ,x_n]$, with $B(0,\ldots ,0)=1$, such that $f=A/B$.   
By assumption, we can assume that the total degree of $B$ and $A$ are at most $h$. 
Then  it is not difficult to show (see Section \ref{sec: main}) that the $K$-vector space 
$$V :=\{P(x_1,\ldots , x_n)/B(x_1,\ldots, x_n) \mid {\rm deg}(P)\le h\}\subseteq K(x_1,\ldots ,x_n)$$ 
is closed under application of the Cartier operators. 
Furthermore, $V$ has dimension ${n+h\choose n}$. We thus infer from Propositions \ref{prop: 1} and \ref{prop: 2} 
that $\Delta(f)$ is an algebraic power series of degree at most $p^{n+h\choose n}$ and height at most ${n+h\choose n}^2p^{{n+h\choose n}+1}$. 

\medskip

\noindent{\bf Step 3 (Rationalization).} When $f$ is an algebraic irrational power series, the situation is more subtle.  
We would like to reduce to the easy case where $f$ is rational. To achieve this, the idea is to add more variables.  
Indeed, Denef and Lipshitz \cite{DL}, following the pioneering work of Furstenberg \cite{Fur}, 
showed that every algebraic power series $f$ in $K[[x_1,\ldots,x_n]]$ arises as the diagonal 
of a rational power series $R$ in $2n$ variables. Formally, this means that there exists    
$$
R(x_1,\ldots,x_n,y_1,\ldots,y_n) = \sum_{(i_1,\ldots,i_{2n})\in\mathbb N^{2n} } r(i_1,\ldots,i_{2n}) x_1^{i_1}\cdots x_n^{i_{n}} 
y_1^{i_{n+1}}\cdots y_n^{i_{2n}} 
$$
in $K(x_1,\ldots,x_n,y_1,\ldots,y_n)$, such that 
$$
 \Delta_{1/2}(R) := \sum_{(i_1,\ldots,i_n)\in\mathbb N^n} r(i_1,\ldots,i_n, i_1,\ldots,i_n)x_1^{i_1}\cdots x_n^{i_n} = f
 \, .
 $$ 
 Since $\Delta(R)=\Delta(f)$, we are almost done, as we could now replace $f$ by $R$ and use the trick from step $2$. 
The problem we now have is that this rationalization process is not effective. In particular,  it does not give a bound on the height and degree of the rational function $R$ in terms of the height and the degree of the algebraic power series $f$ we started with. 
In order to establish our main result, we need to give an effective version of this procedure.  
Though our approach differs from that used by Denef and Lipshitz it nevertheless hinges on Furstenberg's original work.  
The main issue of Section \ref{sec: rat} is to prove Theorem \ref{theo:mainrat}, which shows 
that one can explicitly control the height of the rational function $R$ in terms of $n$, $d$ and $h$ only. 

\begin{rem}
{\rm   Actually, we do not exactly obtain an effective version of the theorem of Dened and Lipshitz. What we really prove 
in Section \ref{sec: rat} is that every algebraic power series in $n$ variables arises as the diagonal of a rational function 
in $2n$ variables which does not necessarily belong to the ring of power series 
but to a larger field: the field of multivariate Laurent series.   
Elements of such fields also have a kind of generalized power series expansion (so that we can naturally define their diagonals).  
Note that these fields naturally appear when resolving singularities (see \cite{Sat83}). We introduce them in Section \ref{sec: laurent} and 
we explain why it is necessary to use them in the next section.  
}
\end{rem}

\section{Comments on Furstenberg's formula}\label{sec: Fur}

In this section, we explain why we have to work with fields of multivariate Laurent series in order 
to describe our effective procedure for rationalization  of algebraic power series.  

\subsection{The one-variable case}\label{subsec: Fur}
Furstenberg  \cite{Fur} gives a very nice way to express a one-variable algebraic power series as the the diagonals of a two-variable rational 
power series.  Though the intuition for his formula comes from the case where $K=\mathbb C$, it remains true for arbitrary fields.

\begin{prop}[Furstenberg]\label{prop: Fur}
Let $K$ be a field and $f(x)\in K[[x]]$ a formal power series with no constant term.  Let us assume that $P(x,y)\in K[x,y]$ is a 
polynomial such that $P(x,f)=0$ and $\partial{P}/\partial{y}(0,0) \not= 0$. Then the rational function 
$$
R(x,y)  :=  y^2\frac{\partial P}{\partial y}(xy,y)/P(xy,y)
$$
belongs to $K[[x,y]]$ and $\Delta(R)=f$. 
\end{prop}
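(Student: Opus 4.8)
The plan is to verify the two assertions in order: first that $R(x,y)$ lies in $K[[x,y]]$, and then that $\Delta(R) = f$. For the first point, the key observation is the hypothesis $\partial P/\partial y(0,0) \neq 0$. Write $P(x,y) = \sum_{i,j} c_{ij} x^i y^j$; then $P(0,0) = c_{00}$ and $\partial P/\partial y(0,0) = c_{01}$. Since $f$ has no constant term and $P(x,f)=0$, evaluating at $x=0$ gives $P(0,0)=c_{00}=0$. Hence $P(x,y) = y Q(x,y)$ for some polynomial $Q$ with $Q(0,0) = c_{01} = \partial P/\partial y(0,0) \neq 0$. Now substitute $x \mapsto xy$, $y \mapsto y$: $P(xy,y) = y\, Q(xy,y)$, and $Q(xy,y)$ is a power series with nonzero constant term $Q(0,0)$, hence invertible in $K[[x,y]]$. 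Therefore
$$
R(x,y) = y^2 \frac{\partial P}{\partial y}(xy,y) \big/ \big(y\, Q(xy,y)\big) = y \cdot \frac{\partial P}{\partial y}(xy,y) \big/ Q(xy,y) \in K[[x,y]] \, .
$$

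For the second assertion I would compute $\Delta(R)$ via a residue/contour-integral interpretation made formal. The underlying idea is that $\Delta(R)(t)$ is obtained by extracting, for each $n$, the coefficient of $x^n y^n$ in $R(x,y)$; equivalently, writing $R(x,y) = \sum_m \big(\sum_k r_{m+k,\,?}\big)$ — more cleanly, one views $t \mapsto \Delta(R)(t)$ as the constant term (in a suitable sense) of $R(t/z, z)$ with respect to $z$, i.e. $\Delta(R)(t) = \frac{1}{2\pi i}\oint R(t/z,z)\, \frac{dz}{z}$ in the complex-analytic heuristic, and as a formal residue otherwise. Substituting, $R(t/z, z) = z \cdot \frac{\partial P}{\partial y}(t, z) \big/ Q(t, z) \cdot \frac{1}{z}$ after the change of variables $x = t/z$ inside $R(x,y) = y\,\frac{\partial P}{\partial y}(xy,y)/Q(xy,y)$ with $y = z$, giving $\frac{\partial P}{\partial y}(t,z)\big/Q(t,z) = z\,\frac{\partial P}{\partial y}(t,z)\big/P(t,z)$. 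So $\Delta(R)(t)$ is the formal residue at $z=0$ of $\frac{\partial P}{\partial y}(t,z)\big/P(t,z)$, viewed as a Laurent series in $z$ over $K[[t]]$.

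The final step is to evaluate that residue using the factorization of $P(t,z)$ as a polynomial in $z$ over the ring $K[[t]]$ (or its fraction field). Since $P(t,z) = z\,Q(t,z)$ and $Q(0,0)\neq 0$, by Hensel/Newton the root $z = f(t)$ is the unique root of $P(t,z)/? $ tending to $0$ as $t\to 0$ among the "small" roots; more precisely $\partial P/\partial y(0,0)\neq 0$ forces $z=0$ to be a simple root of $P(0,z)$, so $P(t,z) = (z - f(t)) \cdot U(t,z)$ with $U(0,0) \neq 0$, $U$ a unit in $K[[t,z]]$. Logarithmic differentiation gives $\frac{\partial P/\partial y(t,z)}{P(t,z)} = \frac{1}{z - f(t)} + \frac{\partial U/\partial z(t,z)}{U(t,z)}$, and the second term is regular at $z=0$ (being a ratio of power series with invertible denominator), hence has zero residue; the first term, expanded as $\frac{1}{z-f(t)} = \frac{1}{z}\cdot\frac{1}{1 - f(t)/z}$ — wait, one must expand it correctly in the variable in which we take residues. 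Expanding $\frac{1}{z - f(t)}$ as a power series in $z$ (legitimate since $f(t)$ is a unit? no — $f$ has no constant term), one instead expands it as $-\frac{1}{f(t)}\sum_{k\ge 0}(z/f(t))^k$, which has residue $0$; but this contradicts the expected answer $f$. The resolution — and this is the step I expect to be the main obstacle — is that the correct residue pairing extracts the $z^{-1}$-coefficient after expanding in the annulus where $R(t/z,z)$ actually converges as a Laurent series, i.e. one must track carefully that $R(x,y)\in K[[x,y]]$ means $R(t/z,z)$ expands with $z$ appearing to arbitrarily negative powers bounded below in $t$-adic order, so the relevant expansion of $\frac{1}{z-f(t)}$ is $\sum_{k\ge 1} f(t)^{k-1} z^{-k}$, whose residue (coefficient of $z^{-1}$) is $f(t)$. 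Verifying that this is indeed the expansion dictated by $R \in K[[x,y]]$ — rather than the other geometric-series expansion — is the delicate bookkeeping point, and I would handle it by comparing orders: in $R(x,y) = \sum_{a,b} \rho_{ab} x^a y^b$, the substitution $x = t/z$, $y = z$ produces $\sum_{a,b}\rho_{ab} t^a z^{b-a}$, and grouping by powers of $z$ one reads off that the coefficient of $z^0$ is $\sum_a \rho_{aa} t^a = \Delta(R)(t)$; matching this against the partial-fraction expansion of $\frac{\partial P/\partial y}{P}$ done consistently in the same $(t,z)$-bigraded completion then yields $\Delta(R)(t) = f(t)$.
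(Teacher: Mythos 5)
Your overall strategy --- factor the root $f$ out of $P$, take the logarithmic derivative, discard the part that is regular at the origin, and read the diagonal off the remaining term --- is essentially the argument the paper itself gives (the paper does it directly in $K[[x,y]]$ via a geometric series rather than via a formal residue). However, two of your steps are wrong as written. First, $P(0,0)=0$ does \emph{not} give a factorization $P(x,y)=y\,Q(x,y)$ with $Q$ a polynomial: take $P(x,y)=y-x$ and $f(x)=x$; then $P$ has no constant term but is not divisible by $y$. What is true, and what you actually need, is that after the substitution every monomial $c_{ij}(xy)^iy^j=c_{ij}x^iy^{i+j}$ has $i+j\ge 1$, so $y$ divides $P(xy,y)$ in $K[x,y]$ and the quotient has constant term $c_{01}=\partial P/\partial y(0,0)\neq 0$, hence is a unit of $K[[x,y]]$; with that correction your conclusion $R\in y\,K[[x,y]]$ stands, but your $Q$, which you keep using later, does not exist.

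Second, the residue computation contains two compensating slips. Since $R(t/z,z)=z^{2}\,\partial_yP(t,z)/P(t,z)$, the quantity $\Delta(R)(t)$, i.e.\ the coefficient of $z^{0}$ in $R(t/z,z)$, is the residue of $z\,\partial_yP(t,z)/P(t,z)$, not of $\partial_yP(t,z)/P(t,z)$ (you derive the integrand $z\,\partial_yP/P$ and then drop the $z$); and in the expansion $1/(z-f(t))=\sum_{k\ge 1}f(t)^{k-1}z^{-k}$ the coefficient of $z^{-1}$ is $1$, not $f(t)$ as you assert. The two errors cancel (the residue of $z/(z-f(t))$ in the large-$z$ expansion is indeed $f(t)$), so the final answer is right, but neither step is. Moreover the point you yourself flag as delicate --- that the large-$z$ expansion of $1/(z-f(t))$, and the $K[[t,z]]$ expansion of $\partial_zU/U$, are the ones dictated by $R\in K[[x,y]]$ --- is only sketched. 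The paper avoids this entirely: writing $P(x,y)=(y-f(x))\,C(x,y)$ in $K[[x]][y]$ with $C(0,0)=\partial P/\partial y(0,0)\neq 0$, it gets $R(x,y)=\frac{y}{1-y^{-1}f(xy)}+y^{2}\,\frac{\partial C/\partial y(xy,y)}{C(xy,y)}$, expands the first term as $\sum_{j\ge 0}y^{1-j}f(xy)^{j}$ inside $K[[x,y]]$ (legitimate because $xy$ divides $f(xy)$), and applies $\Delta$ termwise: only $j=1$ meets the diagonal, while every monomial of the second term has $y$-degree at least two more than its $x$-degree, so its diagonal vanishes. If you correct the power of $z$ and actually carry out the expansion-matching you allude to (which amounts to this same termwise computation), your proof becomes correct.
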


Let us now give an example where Furstenberg's result does not apply directly. 
The algebraic function $f(x)=x\sqrt{1-x}$ has a Taylor series expansion given by 
$$f(x) = x-x^2/2 - x^3/8+\cdots \in \mathbb Q[[x]] \,.$$ 
We thus have $P(x,f)=0$ where $P(x,y)=y^2-x^2(1-x)$. 
Notice that we cannot invoke Proposition \ref{prop: Fur} as $\partial{P}/\partial{y}$ vanishes at $(0,0)$.
However, there is a natural way to overcome this problem which always works with one-variable power series.   
Let us  write
$f(x)=Q(x)+x^i g(x)$, where $Q(x)$ is a polynomial and $g(x)$ is a power series that vanishes at $x=0$.  
If $i$ is chosen to be the order at $x=0$ of the resultant of $P$ and $\partial{P}/\partial{y}$ with respect to the variable $y$, 
then $g$ satisfies the conditions of Proposition \ref{prop: Fur}.   
In our example, the resultant is (up to a scalar) equal to $x^2(1-x)$ and so the order at $x=0$ is $2$. We thus write
$$f(x)=x-x^2/2+x^2g(x)$$ and we see that $g$ satisfies the polynomial equation
$$4xg(x)^2 +4(2-x)g(x)-x \ = \ 0 \,.$$
Set $P_1(x,y) := 4xy^2+(8-4x)y-x$. As claimed, one can check that the partial derivative of $P_1$ with respect to $y$ 
does not vanish at $(0,0)$.  Applying Furstenberg's result, we obtain that $g$ is the diagonal of the rational function 
$$T(x,y)=y^2 (8xy^2+8-4xy)/(4xy^3+8y-4xy^2-xy) \, .$$  
Note that $T(x,y)$ can be rewritten as
$$\frac{1}{8}\cdot\left(8xy^3+8y -4xy^2 \right)\left(1-xy/2 -xy^2/2-x/8\right)^{-1} \, ,$$
which shows that it can be expanded as a power series. 
Finally, we get that $f$ is the diagonal of the rational power series 
$$R(x,y) := xy-x^2y^2/2 + x^2y^2T(x,y) \in \mathbb Q[[x,y]]\, .$$

\subsection{A two-variable example} 
In the case where $f$ is a multivariate algebraic power series, we would still would like to use Furstenberg's formula,  
but new difficulties appear.  
Let us now consider a two-variable example to see why the previous trick could fail in this case. 
Consider the algebraic power series 
$$
f(x,y) = \sqrt{y^2-3y^3+4yx-12y^2x+4x^2-12yx^2} \, .
$$
It has the following power series expansion: 
$$f(x,y)=y+2x-\frac{3}{2}y^2-3yx +\cdots \in \mathbb Q[[x,y]] \, .$$ 
Clearly, $f$ satisfies the polynomial equation $P(x,y,z)=0$ where
$$P(x,y,z)=z^2-y^2-3y^3+4yx-12y^2x+4x^2-12yx^2 \, .$$
Unfortunately, we cannot invoke the natural extension of Furstenberg's formula in this case as the partial derivative 
of $P$ with respect to $z$ vanishes at the origin.

\begin{rem} {\rm Denef and Lipschitz \cite{DL} get around this problem by noting that, given a field $K$, the ring of 
algebraic power series in $K[[x_1,\ldots,x_n]]$ is the Henselization of the local ring $R=K[x_1,\ldots,x_n]_M$, where $M$ is the maximal ideal 
$(x_1,\ldots,x_n)$.  The Henselization is a direct limit of finite \'etale extensions and hence the ring
$R[f]$, formed by adjoining $f$ to $R$, lies in a finite \'etale extension $B$ of $R$.  Because $B$ is a locally standard \'etale extension, 
it is possible to find a generator $\phi$ for the localization of $B$ at a maximal ideal of $B$ above $M$ such that $\phi$ satisfies 
the conditions needed in order to apply Furstenberg's formula.  Moreover, $f$ lies in $B$ and hence it can be expressed as a 
$R$-linear combination of powers of $\phi$.  
This is enough to express $f$ as the diagonal of a rational power series in $K[[x_1,\ldots,x_{n},y_1,\ldots,y_n]]$. However,   
as previously mentioned, this argument is not effective.}
\end{rem}

 \begin{rem} {\rm All one really needs is to be able to make the ring $K[x,y,z]/(P)$ smooth at the origin.  
While there are supposedly ``effective'' ways of resolving singularities (at least in characteristic zero), they are rather sophisticated 
and it is not clear that these methods apply in positive characteristic.  In the case that one is dealing with a one-dimensional variety, 
it is quite simple, as evidenced by the one-variable example above.  The solution is to reduce to a one-dimensional example, 
by localizing $K[x,y,z]/(P)$ at the set of nonzero polynomials in $x$ and embedding this in the one-dimensional 
ring $K((x))[y,z]/(P)$.  This seems to be the simplest way to make things effective.}
\end{rem}

Let us come back to our example and try to apply the approach outlined in the previous remark.   
Let  $L :=\mathbb Q((x))$ denote the field of Laurent series with rational coefficients and consider $P$ as a polynomial in $L[y,z]$ 
and $f$ as a power series in $L[[y]]$.  
Note that $f$ is no longer zero at $(y,z)=(0,0)$, as it now has constant term $f_0(x) = 2x \in L$.  Thus we must write 
$$f(x,y)= 2x + g(x,y),$$ where $g\in L[[y]]$ vanishes at $y=0$ and satisfies the polynomial equation $Q(x,y,g)=0$, where
$$Q(x,y,z)=z^2+ \frac{z(4xy+8x^2)}{(y+2x)} + y(12x^2-4x+12xy-y+3y^2) \, .$$
Note that in general $f_0$ will be an algebraic power series in one-variable and by the algorithm described in \ref{subsec: Fur} 
it can be effectively written as the diagonal of a two-variable rational power series.  Hence we may restrict our attention to $g$.

\medskip

The partial derivative of $Q$ with respect to $z$ is $2z+(4xy+8x^2)/(y+2x)$. When we set $y$ and $z$ 
equal to zero, we obtain $4x\not= 0$.  We can thus use Furstenberg formula if we work in $L[[y,z]]$.  
Set
$$R(x,y,z) := z^2\frac{\partial{Q}}{\partial{z}}(x,yz,z) /Q(x,yz,z) \in \mathbb Q(x,y,z) \,.$$
We thus have 
$$
R(x,y) = \frac{2z^2(4z+2x) + z(4xyz+ 8x^2)}{z(yz+2x)+4xyz + 8x^2+y(12x^2-4x+12xyz -yz+3y^2z^2)} \, \cdot
$$
Note that $R$ cannot be expanded as a power series and thus does not belong to $\mathbb Q[[x,y,z]]$. However, it turns out that it has 
a generalized power series expansion as an element of the bigger field $\mathbb Q\langle\langle x,y,z \rangle\rangle$ 
(see Section \ref{sec: laurent} for a definition). Furthermore, $g$ turns out to be the diagonal of $R$ in this bigger field.  
Finally, we can show that $f(x,y)$ is the diagonal of the $4$-variable rational function
$$
2xt + \frac{2z^2(4z+2xt) + z(4xyzt+ 8x^2t^2)}{z(yz+2xt)+4xyzt + 8x^2t^2+y(12x^2t^2-4xt+12xyzt -yz+3y^2z^2)}  \, \cdot
$$
Again, this rational function belongs to $\mathbb Q\langle\langle x,y,z,t \rangle\rangle$ but not to $\mathbb Q[[x,y,z,t]]$.

\begin{rem}{\rm 
In general, it may happen that the partial derivative of $Q$ with respect to $z$ vanishes at $(y,z)=(0,0)$. In that case, we compute the 
resultant of $Q$ and $\partial{Q}/\partial{z}$ with respect to the variable $z$.  It will be of the form $y^a S$ for some $S$ that is a unit in $L[[y]]$.  
We can then use the algorithm given above in the one-variable case, and rewrite
$$g(x,y)=yg_0(x) + y^2g_1(x)+\cdots + y^a g_a(x)+y^ah(x,y),$$
where $g_0,\ldots ,g_a$ are one-variable algebraic power series and $h\in L[[y]]$ vanishes at $y=0$.  
Again, we can effectively write $g_0,\ldots ,g_a$ as diagonals using the one-variable argument.  
So we may restrict our attention to $h$. 
In this case, we find, by the same reasoning as in the one-variable case, that $h$ satisfies the conditions of Proposition \ref{prop: Fur} 
and so we can put all the information together to finally express $f$ as a diagonal of a generalized Laurent series in 
$\mathbb Q\langle\langle x,y,z,t\rangle\rangle$. In the general case where $f$ is a multivariate algebraic power series with 
coefficients in an arbitrary field, we argue by induction and use the same ideas combining resultants and 
Furstenberg's formula. }
\end{rem}
\section{Fields of multivariate Laurent series}\label{sec: laurent}

In this section, we introduce fields of Laurent series associated with a vector of indeterminates following 
the presentation of Sathaye \cite{Sat83}.

\medskip

\subsection{Fields of Laurent series} 

Let $K$ be a field.  We first recall that the field of Laurent series associated with the indeterminate $x$ is 
as usual defined by 
$$
K((x)) := \left\{ \sum_{i=i_0}^{+\infty} a(i)x^i \mid n_0\in\mathbb Z \mbox{ and } a(i)\in K\right\} \, .   
$$
We then define recursively the field of \emph{multivariate Laurent series associated with the vector of 
indeterminates} ${\bf x}=(x_1,\ldots,x_n)$ by
$$
K\langle\langle {\bf x} \rangle\rangle := K\langle\langle (x_1,x_2,\ldots,x_{n-1}) \rangle\rangle ((x_n)) \, .
$$

\medskip

Let us give a more concrete description of this field. We first define a pure lexicographic ordering 
$\prec$ on the monomials of the form $x_1^{i_1}\cdots x_n^{i_n}$ with 
$(i_1,\ldots ,i_n)\in \mathbb{Z}^{n}$ by declaring that 
$$
x_1\prec x_2 \prec  \cdots \prec x_d \, .
$$ 
This induces a natural 
order on $\mathbb{Z}^n$, which, by abuse of notation, we denote by $\prec$ so that 
$$
(i_1,\ldots,i_n) \prec (j_1,\ldots,j_n)
$$ if these $n$-tuples are distinct and if the largest index $k$ such that $i_k\not=j_k$ satisfies $i_k < j_k$.  
Then it can be shown that the field $K\langle\langle {\bf x} \rangle\rangle$ can be described as the 
collection of all formal series 
$$f(x_1,\ldots,x_n) = \sum_{(i_1,\ldots ,i_n)\in \mathbb{Z}^n} a(i_1,\ldots ,i_n)x_1^{i_1}\cdots x_n^{i_n}$$
whose support is well-ordered, which means that it contains no infinite decreasing subsequence. 
We recall that the support of $f$ is defined by 
$$
{\rm Supp}(f):= \left\{(i_1,\ldots ,i_n)\in \mathbb{Z}^n \mid a(i_1,\ldots ,i_n)\neq 0 \right\} \, .
$$ 

\begin{defn}\emph{
Note that the valuation on $K[x_1,\ldots ,x_n]$ induced by the prime ideal $(x_n)$ extends to 
a valuation on $\KK$.   We let $\nu_n$ denote this valuation and we let $\KKnn[[x_n]]$ denote 
the subring of $\KK$ consisting of all elements $r$ with $\nu_n(r)\ge 0$.}
\end{defn}

\subsection{Algebraic Laurent series} 

Given an $n$-tuple of natural numbers 
$(i_1,\ldots ,i_n)$ and indeterminates $x_1,\ldots ,x_n$, the degree of the monomial $x_1^{i_1}\cdots x_n^{i_n}$ 
 is the nonnegative integer $i_1+\cdots + i_d$. 
Given a polynomial $P$ in $K[x_1,\ldots,x_n]$,  the degree of $P$, $\deg P$,  is defined as 
the maximum of the degrees 
of the monomials appearing in $P$ with nonzero coefficient.   
A central notion in this paper is that of algebraic multivariate Laurent series, that is element of $\KK$ which are 
algebraic over the field of rational functions $K(x_1,\ldots,x_n)$. 

\begin{defn} {\em
We say that  $f(x_1,\ldots,x_n) \in \KK$ 
is \emph{algebraic} if it is algebraic over the field of rational functions $K(x_1,\ldots,x_n)$, 
that is,  if there exist polynomials $A_0,\ldots, A_m\in {K}[x_1,\ldots,x_n]$, not all zero, 
such that
\begin{displaymath}
\sum_{i=0}^{m} A_i(x_1,\ldots,x_n)f(x_1,\ldots,x_n)^i \ = \ 0\, .
\end{displaymath}
The degree of $f$ is defined as the minimum of the positive integer $m$ for which such a relation holds. }
\end{defn} 

\noindent {\bf\itshape Warning.} We have introduced two different notions: the degree of a polynomial and the degree 
of an algebraic function. Since polynomials are also algebraic functions we have to be careful. 
For instance the polynomial $x^2y^3\in K[x,y]$ has degree $5$ but viewed as an element of $K[[x,y]]$ it is an 
algebraic power series of degree $1$.    
In the sequel, we have tried to avoid this kind of confusion.

\begin{defn} {\em Given a polynomial $P(Y)\in K[x_1,\ldots,x_n][Y]$, we define the height of $P$ as 
the maximum of the degrees of the coefficients of $P$. 
The (naive) height of an algebraic power series 
$$f(x_1,\ldots,x_n)=\sum_{{\bf i} \in \mathbb Z^n} a(i_1,\ldots,i_n) x_1^{i_1}\cdots x_n^{i_n}\in 
\KK$$ 
 is then defined as the height of the minimal polynomial of $f$, or equivalently, 
 as the minimum of the heights of the nonzero polynomials $P(Y)\in K[x_1,\ldots,x_n][Y]$ 
 that vanish at $f$.}
\end{defn}

\begin{rem}\emph{  Since $\KK$ is a field, we see that both the field $A$ of rational functions 
$K(x_1,\ldots ,x_n)$  and the field $B$ of fractions of algebraic power series in 
$K[[x_1,\ldots ,x_n]]$ embed in $\KK$. Under these embeddings, we call the elements 
of $A$ the \emph{rational} Laurent power series and we call $B$ the \emph{algebraic} 
Laurent power series. }
\end{rem}

\subsection{Estimates about height and degree of algebraic Laurent series}

We now collect a few estimates about height and degree of algebraic Laurent series 
that will be useful for proving our main result.

\begin{lem} Let $m$ and $n$ be natural numbers and let $d_1,\ldots ,d_m$ and $h_1,\ldots ,h_m$ be integers.   
Suppose that $f_1,\ldots , f_m\in \KK$ are $m$ algebraic Laurent power series such that $f_i$ has degree 
at most $d_i$ and height at most $h_i$ for each $i$.  Suppose that $A_1,\ldots ,A_m$ are rational functions in 
$K(x_1,\ldots ,x_n)$ whose numerators and denominators have degrees bounded above by some constant $d$. 
Then the following hold.

\medskip

\begin{enumerate}

\item[{\rm (i)}]
The algebraic Laurent series $A_1f_1+\cdots +A_m f_m$ has degree at most 
$d_1\cdots d_m$ and height at most $m(d_1\cdots d_m)(\max(h_1,\ldots ,h_m)+d)$.

\medskip

\item[{\rm (ii)}] The algebraic Laurent series $f_1\cdots f_m$ has degree at most 
$d_1\cdots d_m$ and height at most $m(d_1\cdots d_m)\max(h_1, \ldots, h_m)$.
\end{enumerate}
\label{lem: bound}
\end{lem}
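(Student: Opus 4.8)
The plan is to reduce everything to two elementary operations --- multiplying an algebraic Laurent series by a rational function, and combining two algebraic Laurent series by a sum or by a product --- and then to iterate. The degree bounds are formal: the $f_i$ being algebraic of degree $\le d_i$ over $K(x_1,\ldots,x_n)$, the compositum $K(x_1,\ldots,x_n)(f_1,\ldots,f_m)$ has degree $\le d_1\cdots d_m$ over $K(x_1,\ldots,x_n)$ and contains both $A_1f_1+\cdots+A_mf_m$ and $f_1\cdots f_m$, so each of these has degree $\le d_1\cdots d_m$. (This also falls out of the construction below.) The substance is the control of heights, for which the basic tool is the resultant.

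First I would establish the binary estimate. Let $a$ be a root of a nonzero $P\in K[x_1,\ldots,x_n][Y]$ of degree $p$ in $Y$ and height $H$, and let $b$ be a root of a nonzero $Q$ of degree $q$ in $Y$ and height $H'$. Then $a+b$ is a root of $\mathrm{Res}_Z\bigl(P(Z),Q(Y-Z)\bigr)$ and, when $a,b\neq0$ and $Q$ is chosen with nonzero constant term (harmless, since the minimal polynomial of a nonzero element has one), $ab$ is a root of $\mathrm{Res}_Z\bigl(P(Z),Z^{q}Q(Y/Z)\bigr)$. Both resultants lie in $K[x_1,\ldots,x_n][Y]$; they vanish at the claimed point because the two polynomials in $Z$ share the root $Z=a$; and they are not identically zero in $Y$ --- this last point one checks from the formula $\mathrm{Res}_Z(P,G)=\mathrm{lc}(P)^{\deg_Z G}\prod_{P(\alpha)=0}G(\alpha)$, noting that each specialization $G(\alpha)$ is a nonzero polynomial of degree exactly $q$ in $Y$. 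Expanding either resultant as the determinant of the $(p+q)\times(p+q)$ Sylvester matrix in $Z$ --- which has $q$ rows built from the coefficients of $P$ (each of $x$-degree $\le H$) and $p$ rows built from the coefficients of $Q(Y-Z)$, resp. $Z^qQ(Y/Z)$ (each of $x$-degree $\le H'$ and $Y$-degree $\le q$) --- shows that every term of the expansion has $x$-degree $\le qH+pH'$ and $Y$-degree $\le pq$. Hence $a+b$ and $ab$ each have degree $\le pq$ and height $\le qH+pH'$.

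Next, the rational multiplier. Writing $A_i=u_i/v_i$ with $\deg u_i,\deg v_i\le d$ and starting from a relation $\sum_{j=0}^{d_i}c_j\,f_i^{\,j}=0$ with $\deg c_j\le h_i$, the substitution $f_i=(v_i/u_i)(A_if_i)$ followed by multiplication by $u_i^{d_i}$ gives $\sum_j c_j v_i^{\,j}u_i^{\,d_i-j}(A_if_i)^j=0$, a nonzero relation whose coefficients have degree $\le h_i+d_id$; hence $A_if_i$ is algebraic of degree $\le d_i$ and height $\le h_i+d_id$.

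Finally I would iterate. Put $g_i:=A_if_i$ for part (i) and $g_i:=f_i$ for part (ii), discarding any term with $f_i=0$ or $A_i=0$ (which only improves the bounds, and makes the product trivially $0$ in the worst case), and apply the binary estimate successively to the partial sums $S_k=g_1+\cdots+g_k$, resp. to the partial products $g_1\cdots g_k$, taking at each stage the primitive minimal polynomials, which realize the running bounds. This yields the recursions $\deg S_k\le d_k\deg S_{k-1}$ and $\mathrm{ht}\,S_k\le d_k\,\mathrm{ht}\,S_{k-1}+(\deg S_{k-1})\,\mathrm{ht}\,g_k$, hence $\deg S_m\le d_1\cdots d_m$ and $\mathrm{ht}\,S_m\le\sum_{i=1}^m\bigl(\prod_{j\neq i}d_j\bigr)\mathrm{ht}\,g_i$. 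Since each $d_j\ge1$, substituting $\mathrm{ht}\,g_i\le h_i+d_id$ gives the bound $m(d_1\cdots d_m)(\max_i h_i+d)$ of part (i), while $\mathrm{ht}\,g_i=h_i$ gives $m(d_1\cdots d_m)\max_i h_i$ for part (ii). I expect the only genuinely delicate point to be the resultant computation of the second step --- verifying that the two resultants do not vanish identically and reading off both degree bounds from the Sylvester determinant; the rest is routine bookkeeping, including keeping the polynomial degree of a rational function and the algebraic degree of a Laurent series carefully apart.
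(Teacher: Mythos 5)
Your proof is correct, but it takes a genuinely different route from the paper's. You treat the rational multipliers by substituting $f_i=(v_i/u_i)(A_if_i)$ into the minimal polynomial of $f_i$ and clearing denominators, so that $A_if_i$ has degree at most $d_i$ and height at most $h_i+d_id$ (the extra $d_id$ conveniently collapses to $d\,d_1\cdots d_m$ after multiplying by $\prod_{j\neq i}d_j$), and you then combine terms two at a time by resultants, reading off from the Sylvester determinant that a sum (resp.\ product) of elements of degrees $p,q$ and heights $H,H'$ admits an annihilating polynomial of $Y$-degree at most $pq$ and height at most $qH+pH'$; unrolling the recursion gives $\sum_i\bigl(\prod_{j\neq i}d_j\bigr)\mathrm{ht}(g_i)$, which is exactly $m(d_1\cdots d_m)(\max_i h_i+d)$ in case (i) and $m(d_1\cdots d_m)\max_i h_i$ in case (ii). The paper instead forms the tensor product $K_1\otimes\cdots\otimes K_m$ of the fields $K_i=K(x_1,\ldots,x_n)(f_i)$, lifts multiplication by $A_1f_1+\cdots+A_mf_m$ (resp.\ by $f_1\cdots f_m$) to an endomorphism of this $e_1\cdots e_m$-dimensional space, writes its matrix over the common denominator $Q_{1,e_1}\cdots Q_{m,e_m}R_1\cdots R_m$ with numerators of degree at most $m(H+d)$, and takes the characteristic polynomial, which annihilates the element in one step. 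Your route is more elementary and avoids the tensor-product formalism, but it is exactly where you predicted: the delicate points are the ones you address, namely non-vanishing of the two resultants (via the product formula over the roots of $P$, using that each specialization has $Y$-degree exactly $q$), the nonzero constant term of $Q$ needed for the product construction, and the fact that at each stage the minimal polynomial realizes both running bounds so the binary estimate can be iterated monotonically; the paper's route handles all $m$ terms simultaneously, never passes to an algebraic closure, and sidesteps resultant degeneracies entirely. Both arguments yield precisely the stated degree and height bounds.
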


\begin{proof}Let $V$ denote the $K(x_1,\ldots ,x_n)$-vector space spanned by all monomials
$$\{f_1^{i_1}\cdots f_m^{i_m}~\mid ~0\le i_1< d_1,\ldots ,0\le i_m< d_m\}.$$
Then $V$ is a $K(x_1,\ldots ,x_n)$-algebra that has dimension at most $d_1\cdots d_m$ over $K(x_1,\ldots ,x_m)$.  
Note that $A_1f_1+\cdots +A_m f_m$ induces an endomorphism $\phi$ of $V$ by left multiplication.  

\medskip

Let us suppose that $f_i$ has degree $e_i \leq d_i$ over $K(x_1,\ldots ,x_n)$ with minimal polynomial 
$\sum_{k=0}^{e_i} Q_{i,k} Y^{i}\in K[x_1,\ldots, x_n][Y]$.   
Then the field extension $K_i=K(x_1,\ldots,x_n)(f_i)$ is a 
$K(x_1,\ldots,x_n)$-vector space of dimension $e_i$ and
$\{f_i^k~\mid~0\leq k < e_i\}$ forms a basis of this vector space.

Let $$R = K_1 \otimes K_2 \otimes \cdots \otimes K_m\, ,$$ where the tensor products are taken over
$K(x_1,\ldots ,x_n)$. Then $R$ is a $K(x_1,\ldots,x_n)$-vector space of dimension $e_1\cdots e_m$ and 
$$\left\{ f_1^{j_1}\otimes f_2^{j_2}\otimes \cdots \otimes f_m^{j_m} \mid 0\leq j_i<e_i \mbox{ for } i=1,\ldots ,m\right\}$$ 
forms a basis of this vector space.

\medskip

We note that $R$ is a $K(x_1,\ldots ,x_n)$-algebra, since each $K_i$ is a $K(x_1,\ldots ,x_m)$-algebra.  
We regard $R$ as a module over itself.  We then 
have a surjective $K(x_1,\ldots ,x_n)$-algebra homomorphism $g: R \rightarrow V$ 
given by $a_1\otimes a_2 \otimes \cdots \otimes a_m \mapsto a_1 a_2 \cdots a_m$.     

\medskip
 
Next, we let $g_i$ be the element $1 \otimes \cdots\otimes  f_i \otimes \cdots \otimes 1$ in $R$,
where we put $1$s in every slot except for the $i$-th slot, where we put $f_i$.  
Now, we can lift $\phi$ to an $K(x_1,\ldots ,x_n)$-vector space endomorphism $\Phi$ of $W$ defined by 
$$
\Phi(r) = (A_1 g_1 + \cdots +A_m g_m) \cdot r \, ,
$$
for every $r$ in $R$.  
Note that we have 
\begin{equation}\label{eq: g}
g(\Phi(r)) = \phi(g(r)),
\end{equation}
for all $r\in R$.   
 We infer from (\ref{eq: g}) that the characteristic polynomial $P$ of $\Phi$ will also annihilate 
the endomorphism $\phi$, since $g$ is surjective.

\medskip

 Let $R_i$ be a nonzero polynomial in $K[x_1,\ldots ,x_m]$ of degree at most $d$ with the property that $A_iR_i$ 
 is also a polynomial.  Let $H:=\max\{ h_1,\ldots,h_m\}$. Observe that 
 $(A_1 g_1 + \cdots +A_m g_m) \cdot (f_1^{j_1}\otimes f_2^{j_2}\cdots \otimes f_m^{j_m})$ is 
a $K(x_1, \ldots,x_n)$-linear combination of tensors 
$f_1^{j_1}\otimes f_2^{j_2}\cdots \otimes f_m^{j_m}$ with numerators and denominators
of degrees bounded by $H+d$ and each denominator dividing $Q_{i,e_i}R_i$ for some $i$ 
(note that by definition $Q_{i,e_i}$ is nonzero).   
Thus, using a common denominator $Q_{1,e_1}\cdots Q_{m,e_m} R_1\cdots R_m$, 
we see that the endomorphism $\Phi$ can be represented by a $e_1\cdots e_m\times e_1\cdots e_m$ 
matrix whose entries are rational functions with a common
denominator $Q_{1,e_1}\cdots Q_{m,e_m} R_1\cdots R_m$ and with numerators of degree at most $m(H+d)$. 

 This gives that  $P$, the characteristic polynomial of $\Phi$, has height at most $(e_1\cdots e_m)(H+d)m$ 
 (and of course degree equal to $e_1\cdots e_m$).  It follows that $A_1f_1+\cdots +A_mf_m$ has height at most 
 $m(d_1\cdots d_m)(H+d)$ and degree at most $d_1\cdots d_m$, as required. 
 
 \medskip 
 
A similar argument in which we lift $\psi$, the endomorphism given by left multiplication by $f_1\ldots f_m$, gives that $f_1\cdots f_m$ has degree at most $d_1\cdots d_m$ 
and height at most $m(d_1\cdots d_m)H$. 
\end{proof}

\begin{lem} 
Let $f\in \KK$ be algebraic of degree $d$ and height at most $h$.  Then
$\vert \nu_n(f)\vert \leq  h$ and $x_n^{-\nu_n(f)}f$ is algebraic of degree $d$ 
and height at most $h(d+1)$.
\label{lem: nu} 
\end{lem}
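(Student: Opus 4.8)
The plan is to start from a minimal polynomial $P(Y) = \sum_{i=0}^{d} A_i(x_1,\ldots,x_n) Y^i \in K[x_1,\ldots,x_n][Y]$ of $f$, with each $A_i$ of degree at most $h$, and to analyze it through the valuation $\nu_n$ (the $(x_n)$-adic valuation extended to $\KK$). Write $v = \nu_n(f)$ and $a_i = \nu_n(A_i)$ for those $i$ with $A_i \neq 0$. Since $\nu_n$ restricted to $K[x_1,\ldots,x_n]$ is just the order of vanishing at $x_n = 0$ (the largest power of $x_n$ dividing the polynomial), and since a polynomial of total degree at most $h$ cannot be divisible by $x_n^{h+1}$, we get $0 \le a_i \le h$ for each relevant $i$. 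In the sum $\sum_i A_i f^i = 0$, the $i$-th term has valuation $a_i + i v$. A nonzero sum of elements of a valued field must have at least two terms of minimal valuation, so there exist indices $i < j$ (both with $A_i, A_j \neq 0$) such that $a_i + i v = a_j + j v$, whence $v = (a_i - a_j)/(j - i)$. Since $|a_i - a_j| \le h$ and $j - i \ge 1$, this yields $|v| \le h$, which is the first claim.

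**Bounding the degree and height of $g := x_n^{-v} f$.** First, $g$ is algebraic of the same degree $d$: indeed $x_n^{-v}$ is a nonzero element of the rational function field $K(x_1,\ldots,x_n)$ (it is $x_n^{-v}$ if $v \ge 0$, or $x_n^{|v|}$ if $v < 0$, up to taking reciprocals — in any case a unit times a monomial, hence rational), so multiplication by it is a $K(x_1,\ldots,x_n)$-linear automorphism of the field, and it sends the degree-$d$ element $f$ to an element of degree exactly $d$. For the height, I would write down an explicit annihilating polynomial for $g$: from $\sum_i A_i f^i = 0$ and $f = x_n^{v} g$ we get $\sum_i A_i x_n^{iv} g^i = 0$. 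If $v \ge 0$ this is already a polynomial relation with coefficients $A_i x_n^{iv}$, of degree at most $h + iv \le h + dh = h(d+1)$ (using $v \le h$), so the height of $g$ is at most $h(d+1)$. If $v < 0$, set $w = -v > 0$ and multiply through by $x_n^{dw}$ to clear denominators: $\sum_i A_i x_n^{(d-i)w} g^i = 0$, with coefficients of degree at most $h + (d-i)w \le h + dw \le h + dh = h(d+1)$. Either way the height of $g$ is at most $h(d+1)$.

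**Main obstacle and loose ends.** The genuinely delicate point is the first step: I need to be sure that the two-terms-of-minimal-valuation argument is legitimate, i.e. that $\nu_n$ really is a (rank-one, or at least non-archimedean) valuation on $\KK$ for which "a nonzero sum has a repeated minimal valuation" holds, and that the valuation of a power $f^i$ is $i \cdot \nu_n(f)$. This is exactly what the definition preceding Lemma~\ref{lem: nu} guarantees — $\nu_n$ is the valuation induced by the prime $(x_n)$ — so I would simply cite that and the standard ultrametric inequality, being careful to note that if only one $A_i$ were nonzero then $f$ would be a unit times a monomial in $x_n$ over the base field, forcing $A_i f^i = 0$ with $A_i, f \neq 0$, a contradiction; hence at least two $A_i$ are nonzero and the argument applies. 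A secondary subtlety is that the bound $0 \le a_i \le h$ uses that $\nu_n$ of a polynomial equals its order at $x_n=0$ and that total degree $\le h$ caps this order by $h$; this is immediate. Everything else (the automorphism argument for the degree, the explicit clearing of denominators for the height) is routine bookkeeping, so I expect the write-up to be short once the valuation-theoretic first step is pinned down.
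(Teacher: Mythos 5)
Your proposal is correct and follows essentially the same route as the paper: the bound $\vert\nu_n(f)\vert\le h$ comes from the ultrametric fact that a vanishing sum must have two terms of equal (minimal) valuation — the paper states the contrapositive, namely that if $\vert\nu_n(f)\vert>h$ all term valuations in the relation are distinct — and the height bound is obtained, exactly as in the paper, by substituting $f=x_n^{\nu_n(f)}g$ and clearing the powers of $x_n$, giving coefficients of degree at most $h+d\vert\nu_n(f)\vert\le h(d+1)$. Your explicit remarks that at least two coefficients $A_i$ are nonzero and that multiplication by $x_n^{-\nu_n(f)}\in K(x_1,\ldots,x_n)$ preserves the degree are points the paper leaves implicit, and they are handled correctly.
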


\begin{proof}
Let $j:=\nu_n(f)$.  By assumption $f$ satisfies a non-trivial polynomial equation of the form 
$P_d f^d +\cdots+ P_1f+P_0 = 0$ with $P_dP_0\neq 0$ and with the degrees of $P_0,\ldots ,P_d$ 
bounded above by $H$.  Then $g:=x_n^{-j}f$ satisfies a polynomial equation of the form
\begin{equation}
\label{eq: PS}
P_d x_n^{-jd} g^d +\cdots  + P_1 x_n^{-j}g + P_0=0\, .
\end{equation}  
If $\vert j\vert >h$ then $\{\nu_n(P_i x_n^{-j i}g^i)~\mid~0\le i\le d\}$ are all distinct, and hence 
Equation (\ref{eq: PS}) cannot hold, a contradiction.  By multiplying by an appropriate power of $f$, 
we see that the height of $x_n^{-j}f$ is at most $dj+h\le h(d+1)$.
\end{proof}

\medskip

Note that, by definition of the field $\KK$, every $f\in \KK$ has a unique infinite decompostion 
\begin{equation}\label{eq: decomposition}
f = \sum_{i=\nu_n(f)}^{+\infty} f_i x_n^{i} \, ,
\end{equation}
where $f_i \in \KKnn$ and $f_{\nu_n(f)}\neq 0$.  For every nonnegative integer $k$, we set 
\begin{equation}
\label{eq: gk}
g_{\nu_n(f)+k} := x_n^{-\nu(f)-k}\left(f- \sum_{i=\nu_n(f)}^{\nu(f) + k} f_i x_n^{i} \right)\, .
\end{equation}
For every integer $r\geq \nu_n(f)$, we thus have the following decomposition: 
\begin{equation}\label{eq: decomposition2}
f = \sum_{i=\nu_n(f)}^{r} f_i x_n^{i} + x_n^r g_r\, .
\end{equation}

\medskip

\begin{lem} Let $d$ and $h$ be natural numbers and let $f$ be an algebraic element of $\KK$ 
of degree at most $d$ and height at most $h$ over $K(x_1,\ldots ,x_n)$.   
Then, for every nonnegative integer $k$, the following hold.

\medskip

\begin{enumerate}

\item[{\rm (i)}] The Laurent series $f_{\nu_n(f)+k}$ is algebraic over $K(x_1,\ldots x_{n-1})$ of degree at most 
$d^{2^k}$ and height at most $8^{k+1}d^{2^{k+2}}h$.

\medskip

\item[{\rm (ii)}] The Laurent series $g_{\nu_n(f)+k}$ is algebraic over $K(x_1,\ldots ,x_n)$ 
of degree at most $d^{2^{k+1}}$ and height at most $8^{k+1}d^{3\cdot 2^{k+1}}h$. 

\end{enumerate}
\label{lem: weirdbounds}
\end{lem}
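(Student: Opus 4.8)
The natural approach is simultaneous induction on $k$, proving (i) and (ii) together, since the two quantities are intertwined: extracting the next coefficient $f_{\nu_n(f)+k}$ and extracting the next tail $g_{\nu_n(f)+k}$ feed into each other through the decomposition \eqref{eq: decomposition2}. For the base case $k=0$, note $f_{\nu_n(f)} = g_{\nu_n(f)}(0,\ldots,0,x_n=0)$ in a suitable sense; more concretely, by Lemma \ref{lem: nu} the Laurent series $h_0 := x_n^{-\nu_n(f)}f = g_{\nu_n(f)}$ is algebraic over $K(x_1,\ldots,x_n)$ of degree at most $d$ and height at most $h(d+1)$, and $f_{\nu_n(f)}$ is its ``constant term'' with respect to $x_n$, i.e. the image of $h_0$ under the residue map modulo the valuation ideal $(x_n)$. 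The key sublemma I would isolate is: if $u \in K[[x_1,\ldots,x_{n-1}]]\langle\langle\cdots\rangle\rangle[[x_n]]$ (an element of $\nu_n \geq 0$) is algebraic over $K(x_1,\ldots,x_n)$ of degree $\leq \delta$ and height $\leq \eta$, then its reduction modulo $(x_n)$ is algebraic over $K(x_1,\ldots,x_{n-1})$ with controlled degree and height. This is a resultant/elimination computation: take the minimal polynomial $P(Y) = \sum P_i(x_1,\ldots,x_n) Y^i$, and one wants a polynomial over $K(x_1,\ldots,x_{n-1})$ satisfied by $u(x_n = 0)$; specializing $x_n \to 0$ is delicate because $P_\delta$ might vanish at $x_n = 0$, so one must instead argue via the resultant of $P$ and its $x_n$-content, or pass through $\tilde P(Y) := P(Y) / x_n^{v}$ after dividing out the common power of $x_n$, then specialize. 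The degree roughly squares and the height picks up a factor bounded by $\delta$ (or a small constant times $\delta$) at each such operation, which is where the $d^{2^k}$ and $8^{k+1}$ shapes come from.

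For the inductive step, suppose (i) and (ii) hold at level $k$. Then \eqref{eq: gk} gives
$$
g_{\nu_n(f)+k+1} = x_n^{-1}\left(g_{\nu_n(f)+k} - f_{\nu_n(f)+k}\right),
$$
so $g_{\nu_n(f)+k+1}$ is obtained from $g_{\nu_n(f)+k}$ (degree $\leq d^{2^{k+1}}$, height $\leq 8^{k+1}d^{3\cdot 2^{k+1}}h$) and $f_{\nu_n(f)+k}$ (viewed as an element of $K(x_1,\ldots,x_{n-1}) \subseteq \KK$, degree $\leq d^{2^k}$, height $\leq 8^{k+1}d^{2^{k+2}}h$) by a subtraction and a division by $x_n$. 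The subtraction is handled by Lemma \ref{lem: bound}(i) and the division by $x_n$ by Lemma \ref{lem: nu} (it changes the valuation by $1$, which costs a height factor of at most $\deg + 1$). Multiplying the degree bounds gives $d^{2^{k+1}}\cdot d^{2^k} \leq d^{2^{k+2}}$... wait, one must be careful here: $g_{\nu_n(f)+k}$ has degree $\leq d^{2^{k+1}}$ and $f_{\nu_n(f)+k}$ has degree $\leq d^{2^k}$, so the $K$-span of products has dimension $\leq d^{2^{k+1}} \cdot d^{2^k} = d^{3\cdot 2^k}$, which is at most $d^{2^{k+2}}$ since $3 \cdot 2^k \leq 4\cdot 2^k = 2^{k+2}$; then $f_{\nu_n(f)+k+1}$, being the residue of $g_{\nu_n(f)+k+1}$ mod $(x_n)$, has degree at most the square of that, i.e. at most $d^{2^{k+3}}$ — which overshoots $d^{2^{k+1}}$. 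This indicates the bookkeeping must be sharper: in fact the residue map does not square the degree, because the reduction of an algebraic element lies in a field extension of bounded degree, not one of squared degree — one should bound $\deg(f_{\nu_n(f)+k}(x_n=0))$ directly by $\deg$ of the element, reserving the squaring only for the passage through resultants when the leading coefficient degenerates. The right statement to prove as the sublemma is therefore: reduction mod $(x_n)$ at most squares the degree and multiplies the height by a bounded polynomial in the degree — and then the inductive estimates are arranged so that the exponents $2^k$ absorb this.

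The main obstacle, then, is the careful tracking of how degree and height propagate under the three elementary operations involved — reduction modulo $(x_n)$ (a resultant-type elimination, responsible for the doubling of exponents $2^k \to 2^{k+1}$), subtraction (Lemma \ref{lem: bound}(i)), and division by a power of $x_n$ (Lemma \ref{lem: nu}) — and in particular verifying that the somewhat generous-looking constants $8^{k+1}$ and the exponents $2^{k+2}$, $3\cdot 2^{k+1}$ are genuinely large enough to close the induction. I expect the proof to proceed by first stating and proving the reduction-mod-$(x_n)$ sublemma with explicit constants (degree $\leq d^2$, height $\leq C d^2 h$ for a small absolute $C$, say $C \leq 8$), then feeding it through the recursion \eqref{eq: gk} together with Lemmas \ref{lem: bound} and \ref{lem: nu}, and finally a short arithmetic verification that the claimed bounds are preserved at each step. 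No deep idea is needed beyond elimination theory and the linear-algebra bounds already established; the difficulty is entirely in the constant-chasing.
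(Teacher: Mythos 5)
Your overall skeleton (simultaneous induction on $k$, the decomposition (\ref{eq: decomposition2})--(\ref{eq: gk}), Lemma \ref{lem: nu} for the $x_n$-shifts, Lemma \ref{lem: bound} for the subtraction) is the same as the paper's, but the one quantitative point on which the lemma turns is left unresolved, and the version you finally commit to is wrong. Your proposed key sublemma --- reduction modulo $(x_n)$ ``at most squares the degree'' (degree $\le d^2$, height $\le Cd^2h$) --- cannot close the induction with the stated exponents: as your own computation shows, it would give $\deg f_{\nu_n(f)+k+1} \le (\deg g_{\nu_n(f)+k})^2 \le d^{2^{k+2}}$, overshooting the required $d^{2^{k+1}}$, and asserting that ``the exponents $2^k$ absorb this'' is precisely what fails. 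The correct statement, which is what the paper proves, is that the residue map does not increase the degree at all and no resultant is needed: after the shift of Lemma \ref{lem: nu}, divide the vanishing polynomial $\sum_i Q_i Y^i$ by the largest power of $x_n$ dividing all the $Q_i$, so that $\nu_n(Q_i)=0$ for some $i$, and apply the evaluation homomorphism $\phi\colon x_n\mapsto 0$ of $\KKnn[[x_n]]$ directly to the relation; the image relation is nontrivial (it cannot collapse to a nonzero constant), whence $\deg f_{\nu_n(f)+k+1}\le \deg g_{\nu_n(f)+k}$ and $h(f_{\nu_n(f)+k+1})\le (\deg g_{\nu_n(f)+k}+1)\,h(g_{\nu_n(f)+k})$, the height factor coming only from the division by $x_n$. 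You do mention this content-division trick in passing, but then abandon it for the squaring version; the doubling $2^{k}\to 2^{k+1}$ in the lemma has nothing to do with the residue map and comes solely from Lemma \ref{lem: bound} applied to the subtraction.

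Two further slips reinforce the confusion. First, the recursion is $g_{\nu_n(f)+k+1}=x_n^{-1}g_{\nu_n(f)+k}-f_{\nu_n(f)+k+1}$ (immediate from (\ref{eq: gk})), not $x_n^{-1}\bigl(g_{\nu_n(f)+k}-f_{\nu_n(f)+k}\bigr)$. Second, $x_n^{-\nu_n(f)}f=f_{\nu_n(f)}+g_{\nu_n(f)}$, not $g_{\nu_n(f)}$, so already at $k=0$ part (ii) needs one application of Lemma \ref{lem: bound} to the difference (this is where the bounds $d^2$ and $8d^6h$ come from), and not just Lemma \ref{lem: nu}. With the corrected sublemma the arithmetic does close: $\deg g_{\nu_n(f)+k+1}\le(\deg g_{\nu_n(f)+k})^2$ and $h(g_{\nu_n(f)+k+1})\le 8(\deg g_{\nu_n(f)+k})^3h(g_{\nu_n(f)+k})$, which propagate exactly to $d^{2^{k+2}}$ and $8^{k+2}d^{3\cdot 2^{k+2}}h$, while $(d^{2^{k+1}}+1)\cdot 8^{k+1}d^{3\cdot 2^{k+1}}h\le 8^{k+2}d^{2^{k+3}}h$ gives (i) at level $k+1$. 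This verification, which you defer as ``constant-chasing'', is the actual content of the lemma and is missing from your proposal.
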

\begin{proof} 

We prove this by induction on $k$.  We first assume that $k=0$.  
Set $j:=\nu_n(f)$ and  $g=x_n^{-j}f$.   
By Lemma \ref{lem: nu}, $g$ satisfies a polynomial equation of the form
$$
\sum_{i=0}^d Q_i g^i \ = \ 0
$$ 
where the $Q_i$ have degree at most $h(d+1)$ and such that 
$\nu_n(Q_i)=0$ some $i$.   Let us denote by $\phi$ the canonical homomorphism 
from $\KKnn[[x_n]]$ to $\KKnn$ given by $x_n\mapsto 0$.  Then 
$$
0 = \phi\left(\sum_{i=0}^d Q_ig^i\right)  = \sum_{i=1}^d \phi(Q_i)f_{j}^{i} 
$$ and so $f_j$ is an algebraic Laurent series over $K(x_1,\ldots,x_{n-1})$ of degree at 
most $d$ and height at most $h(d+1)$.  
Also by Equation (\ref{eq: gk}) we have that $g_j=fx_n^{-j}- f_j$.  
Lemma \ref{lem: bound} implies that $g_j$ is algebraic over $K(x_1,\ldots,x_n)$ with degree at most $d^2$ 
and height at most $2d^2(h(d+1)+1) \le 8d^6h$.   This establishes the case $k=0$.

Now suppose that the claim is true for all natural numbers less than $k$, for some nonnegative 
integer $k$.  
Let us first note that $f_{\nu_n(f)+k+1} = \phi(g_{\nu_n(f)+k}/x_n)$. Using Lemma \ref{lem: nu}, 
we get that 
\begin{equation}\label{eq: fk1}
\deg f_{\nu_n(f)+k+1} \leq \deg g_{\nu_n(f)+k}
\end{equation}
and
\begin{equation}\label{eq: fk2}
h(f_{\nu_n(f)+k+1}) \leq (\deg g_{\nu_n(f)+k}+1)h(g_{\nu_n(f)+k}) \,. 
\end{equation}
From these relations we deduce by induction that $f_{\nu_n(f)+k+1}$ has degree at most $d^{2^{k+1}}$ 
and height at most $(d^{2^{k+1}}+1)(8^{k+1}d^{3\cdot 2^{k+1}}h) \leq 8^{k+2}d^{2^{k+3}}h$, as required. 

On the other hand, we have the relation 
$$
g_{\nu_n(f)+k+1} = x_n^{-1}g_{\nu_n(f)+k} - f_{\nu_n(f)+k+1} \,.
$$
By Lemma \ref{lem: bound} and Inequalities (\ref{eq: fk1}) and (\ref{eq: fk2}), we obtain that 
\begin{equation}\label{eq: gk1}
\deg g_{\nu_n(f)+k+1} \leq \deg g_{\nu_n(f)+k} \cdot \deg f_{\nu_n(f)+k+1} \leq (\deg g_{\nu_n(f)+k})^2
\end{equation}
and 
$$
h(g_{\nu_n(f)+k+1}) \leq  2(\deg g_{\nu_n(f)+k}^2) \cdot (\max(h(f_{\nu_n+k+1}), h(g_{\nu_n(f)+k}))+1) \, ,
$$
which gives 
\begin{equation}\label{eq: gk2}
h(g_{\nu_n(f)+k+1}) \leq 8 (\deg g_{\nu_n(f)+k})^3 h(g_{\nu_n(f)+k}) \, .
\end{equation} 
From Equations (\ref{eq: gk1}) and (\ref{eq: gk2}), it follows directly by induction that 
$g_{\nu_n(f)+k}$ has degree at most $d^{2^{k+1}}$ and height at most 
$8^{k+1}d^{3\cdot 2^{k+1}}h$, as required.   
This ends the proof. 
\end{proof}

\section{Cartier operators and diagonals}\label{sec: cartier}

Throughout this section $K$ will denote a perfect field of positive characteristic $p$. 
We recall that a field $K$ of characteristic $p$ is perfect if the map  
$x\mapsto x^p$ is surjective on $K$. 
We introduce a family of operators from $\KK$ into itself, usually referred to as Cartier operators. 
With these operators is associated a Frobenius-type decompositon given by Equation (\ref{eq:fs}) and which is 
well-known to be relevant in this framework. We show that it is possible to bound the degree of the diagonal 
$\Delta(f)$ of a Laurent series $f$ in $\KK$ by finding a finite-dimensional $K$-vector space contained in $\KK$, 
containing $f$ and  invariant under the action of Cartier operators.

\medskip

 Let 
$$
f(x_1,\ldots,x_n) := \ \sum_{(i_1,\ldots,i_n)\in \mathbb{\mathbb Z}^n} a(i_1,\ldots,i_n)x_1^{i_1}\cdots x_n^{i_n}\in 
\KK\, .
$$ 
For all ${\bf j}:=(j_1,\ldots,j_n)\in \Sigma_p^n:=\{0,1,\ldots ,p-1\}^n$,  
we define the \emph{Cartier operator} $\Lambda_{\bf j}$ from $K\langle\langle{x_1,\ldots, x_n}\rangle\rangle$ into itself by 
\begin{equation}\label{AB:equation:EJ}
\Lambda_{\bf j}(f)\ := \ \sum_{(i_1,\ldots,i_n)\in \mathbb{Z}^n} a(pi_1+j_1,\ldots,pi_n+j_n)^{1/p}x_1^{i_1}\cdots x_n^{i_n} \, .
\end{equation}
Note that the support of $\Lambda_{\bf j}(f)$ is well-ordered and thus 
$\Lambda_{\bf j}(f)\in \KK$.  
 We have the following useful decomposition: 
\begin{equation}\label{eq:fs}
f  = \sum_{{\bf j}\in \Sigma_p^n}  \Lambda_{\bf j}( f)^p \ x_1^{j_1}\cdots x_n^{j_n} \, .
\end{equation}

Let us denote by $\Omega_n$, or simply $\Omega$ if there is no risk of confusion,  
the monoid generated by the Cartier operators under composition. We then prove the following result.

\begin{prop} Let $W$ be a $K$-vector space of dimension $d$ included in   
$\KK$ and invariant under the action of $\Omega$. Then for every 
$f\in W$, the Laurent series $\Delta(f) \in K((x))$ is algebraic over $K(x)$ with degree at most 
$p^d$.\label{prop: 1}
\end{prop}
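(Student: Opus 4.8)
The plan is to exploit the Frobenius-type decomposition \eqref{eq:fs} together with the invariance of $W$ under $\Omega$ in order to present $\Delta(f)$ as an element of a finite-dimensional $K(x)$-vector space, which immediately gives the bound $p^d$ on its degree. First I would fix a basis $g_1,\ldots,g_d$ of $W$ over $K$ and record that, since $W$ is $\Omega$-invariant, for each $\mathbf{j}\in\Sigma_p^n$ the operator $\Lambda_{\mathbf j}$ sends each $g_\ell$ to a $K$-linear combination $\Lambda_{\mathbf j}(g_\ell)=\sum_{m}c^{(\mathbf j)}_{\ell,m}g_m$. Applying \eqref{eq:fs} to $g_\ell$ then yields
\begin{equation*}
g_\ell = \sum_{\mathbf j\in\Sigma_p^n}\Lambda_{\mathbf j}(g_\ell)^p\, x_1^{j_1}\cdots x_n^{j_n}
= \sum_{\mathbf j\in\Sigma_p^n}\Bigl(\sum_{m}c^{(\mathbf j)}_{\ell,m}g_m\Bigr)^{p} x_1^{j_1}\cdots x_n^{j_n}
= \sum_{m}\Bigl(\sum_{\mathbf j}\bigl(c^{(\mathbf j)}_{\ell,m}\bigr)^p x_1^{j_1}\cdots x_n^{j_n}\Bigr)g_m^p ,
\end{equation*}
using that raising to the $p$-th power is additive in characteristic $p$. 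Writing $f=\sum_\ell \lambda_\ell g_\ell$ and substituting, we obtain an expression
\begin{equation*}
f = \sum_{m=1}^{d} P_m(x_1,\ldots,x_n)\, g_m^{p},
\end{equation*}
where each $P_m\in K[x_1,\ldots,x_n]$ is an explicit polynomial (of degree $<np$ in fact, but the degree is irrelevant here) whose coefficients are $p$-th powers of the $\lambda_\ell$ and the $c^{(\mathbf j)}_{\ell,m}$.

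Next I would iterate this. The key observation is that the diagonal operator $\Delta$ interacts with the above decomposition in a controlled way: extracting the diagonal of $P_m(\mathbf x)\,g_m^p$ only involves the coefficients of $g_m$ along certain arithmetic progressions, and a standard computation (the same one underlying the Furstenberg--Deligne theorem) shows that $\Delta\bigl(P_m(\mathbf x)g_m^p\bigr)$ can be written as $Q_m(x)\cdot \Delta(h_{m})^{p}$ for suitable $h_m\in W$ and $Q_m\in K[x]$ — more precisely, one uses that the coefficient of $x^{pk+i}$ in $g_m^p$ is the $p$-th power of the coefficient of $x^{k}$ in some $\Lambda$-image of $g_m$, which again lies in $W$ by invariance. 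Carrying this through, one gets that the $K(x)$-vector space
\begin{equation*}
\mathcal V := \operatorname{span}_{K(x)}\bigl\{\Delta(g)^{p^t}\, x^{a} \;:\; g\in W,\ t\ge 0,\ 0\le a<p^{t}\bigr\}
\end{equation*}
is in fact spanned by the finitely many $\Delta(g_1),\ldots,\Delta(g_d)$, i.e. $\dim_{K(x)}\mathcal V\le d$, and that $\Delta(f)\in\mathcal V$. Since any element of a $K(x)$-vector space of dimension $d$ satisfies a linear relation over $K(x)$ of length $d+1$, clearing denominators gives a polynomial relation over $K(x)$ — hence over $K[x]$ — of degree at most $d$ in the variable $Y=\Delta(f)$. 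Wait, that would give degree $d$, not $p^d$; the correct bookkeeping is that $\mathcal V$ is a $K$-vector space closed under $Y\mapsto Y^p$ (up to multiplication by powers of $x$), so one gets a $p$-Frobenius-linear relation $\sum_{k=0}^{d}A_k\Delta(f)^{p^k}=0$, which by Ore's lemma unwinds to an ordinary algebraic relation of degree at most $p^d$.

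Let me restate the core of the argument cleanly: the right object is the smallest $K$-vector space $\widetilde W\subseteq K((x))$ containing $\{\Delta(g):g\in W\}$ and closed under $h\mapsto h^p$ and $h\mapsto x^a h$ for $0\le a<p$; one shows $\dim_{K(x)}\operatorname{span}_{K(x)}\widetilde W\le d$ by exhibiting the $\Delta(g_m)$ as a spanning set, using the displayed decomposition of $f$ above together with the identity $\Delta(x^{j}h(x)^{p})=$ (coefficient extraction) which expresses it in terms of $\Delta$ of a $\Lambda$-image, staying inside $W$. Then $\Delta(f)$ together with $\Delta(f)^p,\Delta(f)^{p^2},\ldots,\Delta(f)^{p^d}$ — which all lie in this $K(x)$-span of dimension $\le d$ — must be $K(x)$-linearly dependent, giving a nonzero additive (Frobenius-semilinear) polynomial $L(Y)=\sum_{k=0}^{d}A_k Y^{p^k}$ with $L(\Delta(f))=0$; expanding $L$ as an ordinary polynomial in $Y$ shows $\Delta(f)$ is algebraic of degree at most $p^{d}$.

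The main obstacle I expect is the precise verification that applying $\Delta$ to the decomposition $f=\sum_m P_m g_m^p$ genuinely keeps one inside a $d$-dimensional $K(x)$-module — i.e. the bookkeeping showing that $\Delta(x^{\mathbf j}\text{-shift of }g_m^p)$ reduces to $\Delta$ of an element of $W$ rather than of some larger space. This is essentially the heart of the Furstenberg--Deligne phenomenon and requires carefully matching the action of $\Delta$ on monomials $x_1^{i_1}\cdots x_n^{i_n}$ with $i_1=\cdots=i_n$ against the Cartier decomposition; everything else (additivity of Frobenius, Ore's lemma converting the $p$-linear relation into an algebraic one of degree $p^d$, counting dimensions) is routine.
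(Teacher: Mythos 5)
Your overall strategy is the same as the paper's in spirit (Cartier operators, the Frobenius decomposition (\ref{eq:fs}), the compatibility of $\Delta$ with this decomposition, and a dimension count giving degree at most $p^d$), and your bookkeeping of how $\Delta$ acts is correct: $\Delta\bigl(x_1^{j_1}\cdots x_n^{j_n}g^p\bigr)$ vanishes unless $j_1=\cdots=j_n=j$, in which case it equals $x^{j}\Delta(g)^p$, so that indeed $\Delta(f)=\sum_m Q_m(x)\Delta(g_m)^p$ for polynomials $Q_m$. The genuine gap is the sentence ``Carrying this through, one gets that $\mathcal V$ \ldots is spanned by $\Delta(g_1),\ldots,\Delta(g_d)$.'' What your computation establishes is the \emph{reverse} containment: each $\Delta(g_\ell)$, and $\Delta(f)$, lies in the $K(x)$-span of the $p$-th powers $\Delta(g_m)^p$. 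To bound $\dim_{K(x)}\mathcal V$ by $d$ you need the opposite statement, namely that each $\Delta(g_m)^p$ lies in ${\rm span}_{K(x)}\bigl(\Delta(g_1),\ldots,\Delta(g_d)\bigr)$, i.e.\ that this span is stable under $h\mapsto h^p$; nothing in your argument delivers this, and without it neither the dimension bound on $\mathcal V$ nor the $K(x)$-dependence of $\Delta(f),\Delta(f)^p,\ldots,\Delta(f)^{p^d}$ is justified. Note also that the obstacle you flag at the end (the interaction of $\Delta$ with the monomial shifts) is not where the difficulty lies; that part is routine and you already carried it out.

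This reversal is precisely the heart of the paper's proof. There one takes a $K(x)$-basis $f_1,\ldots,f_r$ of the span of $\Delta(W)$, writes $f_i=\sum_j R_{i,j}(x)f_j^p$ using the one-variable Frobenius decomposition together with the $\Omega_1$-invariance of $\Delta(W)$, and then proves that the matrix $M=(R_{i,j})$ is invertible: a nonzero vector in its left kernel would yield a nontrivial $K(x)$-linear relation among $f_1,\ldots,f_r$, contradicting that they form a basis. Inverting $M$ gives $f_j^p\in{\rm span}_{K(x)}(f_1,\ldots,f_r)$, whence $L:=K(x)(f_1,\ldots,f_r)$ is spanned by the monomials $f_1^{i_1}\cdots f_r^{i_r}$ with $0\le i_k<p$, so $[L:K(x)]\le p^r\le p^d$ and $\Delta(f)\in L$. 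Alternatively, you could close your gap by showing that Frobenius preserves $K(x)$-linear independence: if $\sum_i a_i u_i^p=0$ with $a_i\in K(x)$, write $a_i=\sum_{j=0}^{p-1}x^j a_{i,j}(x^p)$, use that $K$ is perfect and that the supports of $x^jK((x^p))$ for $0\le j<p$ are disjoint to conclude $\sum_i a_{i,j}^{1/p}u_i=0$ for each $j$, hence all $a_i=0$; then ${\rm span}_{K(x)}\{\Delta(g)\}$ and ${\rm span}_{K(x)}\{\Delta(g)^p\}$ have the same finite dimension, and your containment forces them to be equal, giving Frobenius-stability. Once this missing step is supplied, your Ore-style conclusion via the additive polynomial $\sum_{k=0}^d A_kY^{p^k}$ does yield the bound $p^d$.
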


\begin{proof} Let $f(x_1,\ldots,x_n)$ be a Laurent series in $W$. 
Let us first remark that $\Delta$ is a $K$-linear operator. The set 
$$
\Delta(W) := \left\{ \Delta(g) \mid g \in W\right\} 
$$ 
is thus a  $K$-vector subspace of $K[[x]]$ whose dimension is at most equal to $d$.  
Let $r$, $1\leq r \le d$, denote the dimension of this vector space.  Let 
$f_1(x),\ldots ,f_r(x)\in \Delta(W)$ be a basis of $K(x)\otimes_{K}\Delta(W)$, the 
 $K(x)$-vector space generated by the elements of $\Delta(W)$. 

For every integer $i\in \{0,\ldots ,p-1\}$ and every $g(x_1,\ldots ,x_n)\in \KK$, 
we have that 
$$
\Lambda_i ( \Delta(g)) = \Delta(\Lambda_{(i,\ldots , i)}(g)) \, .
$$  
Thus $\Delta(W)$ is invariant under the action of $\Omega_1$. 
Using,  for $i \in\{1,\ldots, r\}$, the Frobenius decomposition 
$$
f_i(x)  =  \sum_{\ell=0}^{p-1}  t^{\ell} \Lambda_\ell( f_j(x))^p \, ,
$$
we get that
$$
f_i(x)  = \sum_{j=1}^r \sum_{\ell=0}^{p-1}  r_{i,j,\ell}(x) f_j(x)^p x^{\ell},
$$
for some rational functions 
$r_{i,j,\ell}(x)$ in $K(x)$. 
There thus exist rational functions $R_{i,j}(x)$, $(i,j)\in \{1,\ldots ,r\}^2$, so that  
$$f_i(x)=\sum_{j=1}^r R_{i,j}(x)f_j(x)^p\, .$$

We claim that the matrix  
$$
M:=\left( R_{i,j}(x)\right)
$$ 
belongs to ${\rm GL}_r(K(x))$. Indeed, if  $M$ were not invertible there would exist  
a nonzero vector $(T_1(x),\ldots ,T_r(x))\in K(x)^r$ such that 
$$
(T_1(x),\ldots ,T_r(x))M = 0\, .
$$ 
This would imply the relation 
$$
\sum_{i=1}^r T_i f_i=0 
$$ 
and we would have a contradiction for  $f_1,\ldots ,f_r$ is a basis of the vector space  
$K(x)\otimes_{K} \Delta(W)$.
By inverting $M$, we immediately obtain that for every integer $i$, $1\leq i \leq r$, 
the Laurent series $f_i^p$ belongs to the $K(x)$-vector space generated by $f_1,\ldots,f_r$.  
Then the field 
$$
L:=K(x)(f_1,\ldots ,f_r)
$$ 
is a finite dimensional $K(x)$-vector space spanned by the elements of the set   
$$
\left\{f_1^{i_1}\cdots f_r^{i_r}~\mid~0\le i_1,\ldots ,i_r<p\right \}\, .
$$  
In particular we have that $[L:K(x)]\le p^r\le p^d$.  
Now, since $f\in W$, we have $\Delta(f) \in L$ and thus  
$[K(x)(\Delta(f)):K(x)]\le p^d$.  This ends the proof. 
\end{proof}

We complete Proposition \ref{prop: 1} by showing that, under the previous assumptions, it is also possible to bound the height 
of the Laurent series $\Delta(f)$.

\begin{prop} Let $W$ be a $K$-vector space of dimension $d$ included in   
$\KK$ and invariant under the action of $\Omega$. Then for every 
$f\in W$, the Laurent series $\Delta(f) \in K((x))$ is algebraic over $K(x)$ with height at most 
$d^2p^{d+1}$.\label{prop: 2}
\end{prop}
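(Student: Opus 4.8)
The plan is to refine the argument in the proof of Proposition~\ref{prop: 1}, keeping track of degrees at each step. I keep its notation: $r\le d$ denotes the dimension of $\Delta(W)$, and $f_1,\dots,f_r\in\Delta(W)$ is a family that is a $K(x)$-basis of $K(x)\otimes_K\Delta(W)$ and hence, being $K(x)$-linearly independent, also a $K$-basis of $\Delta(W)$. The new input I would add is that the Frobenius decomposition of the $f_i$ can be written with \emph{polynomial coefficients of degree at most $p-1$}. Indeed, since $\Delta(W)$ is $\Omega_1$-invariant we have $\Lambda_\ell(f_i)\in\Delta(W)$ for $0\le\ell\le p-1$, so $\Lambda_\ell(f_i)=\sum_j c_{i,\ell,j}f_j$ with $c_{i,\ell,j}\in K$; substituting this into $f_i=\sum_{\ell=0}^{p-1}x^\ell\Lambda_\ell(f_i)^p$ and using that $K$ has characteristic $p$ yields
$$f_i=\sum_{j=1}^{r}R_{ij}(x)\,f_j^{p},\qquad R_{ij}(x):=\sum_{\ell=0}^{p-1}c_{i,\ell,j}^{\,p}\,x^{\ell}\in K[x],\qquad\deg R_{ij}\le p-1.$$
Writing this as ${\bf f}=M{\bf f}^{(p)}$ with ${\bf f}=(f_1,\dots,f_r)^{t}$, ${\bf f}^{(p)}=(f_1^{p},\dots,f_r^{p})^{t}$ and $M=(R_{ij})$, the matrix $M$ has entries of degree $\le p-1$, and it lies in ${\rm GL}_r(K(x))$ by the same argument as in the proof of Proposition~\ref{prop: 1}.

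Next I would iterate this relation. Let $A^{(q)}$ denote the matrix (or vector) obtained by raising every entry to the $q$-th power; since Frobenius is a ring endomorphism of $K[x]$ we have $(AB)^{(p)}=A^{(p)}B^{(p)}$, and therefore, for every $k\ge 0$,
$${\bf f}=M_k\,{\bf f}^{(p^{k})},\qquad M_k:=M\,M^{(p)}\cdots M^{(p^{k-1})},$$
where (with the convention $M_0:=I$) the matrix $M_k$ has entries in $K[x]$ of degree at most $(p-1)(1+p+\cdots+p^{k-1})=p^{k}-1$, and $M_k\in{\rm GL}_r(K(x))$ because $\det M\ne 0$. Since $\Delta(f)\in\Delta(W)$ and $f_1,\dots,f_r$ is a $K$-basis of $\Delta(W)$, I can write $\Delta(f)=\sum_j\mu_j f_j$ with $\mu_j\in K$. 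Applying the $p^{k}$-th power map (which is additive in characteristic $p$) and then substituting ${\bf f}^{(p^{k})}=(M_{r-k})^{(p^{k})}{\bf f}^{(p^{r})}$, one obtains for $0\le k\le r$
$$\Delta(f)^{p^{k}}=\sum_{j}\mu_j^{\,p^{k}}f_j^{\,p^{k}}=\sum_{l=1}^{r}u_{k,l}\,f_l^{\,p^{r}},\qquad u_{k,l}:=\sum_{j}\mu_j^{\,p^{k}}\bigl((M_{r-k})^{(p^{k})}\bigr)_{jl}\in K[x].$$
Because the $\mu_j^{\,p^{k}}$ are constants, $\deg u_{k,l}\le p^{k}(p^{r-k}-1)\le p^{r}-1$.

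The rest is linear algebra over $K(x)$: the $r+1$ row vectors $(u_{k,1},\dots,u_{k,r})$ with $0\le k\le r$ lie in the $r$-dimensional space $K(x)^{r}$, hence are linearly dependent, and a routine Cramer's-rule / maximal-minors computation (restricting to a nonsingular square submatrix of the $(r+1)\times r$ array of the $u_{k,l}$) produces $\lambda_0,\dots,\lambda_r\in K[x]$, not all zero, with $\deg\lambda_k\le r(p^{r}-1)$ and $\sum_{k}\lambda_k u_{k,l}=0$ for every $l$. Then
$$P(Y):=\sum_{k=0}^{r}\lambda_k Y^{p^{k}}\in K[x][Y]$$
is nonzero and satisfies $P(\Delta(f))=\sum_{l}\bigl(\sum_{k}\lambda_k u_{k,l}\bigr)f_l^{\,p^{r}}=0$, so $\Delta(f)$ is algebraic over $K(x)$ of degree at most $p^{r}\le p^{d}$ (recovering Proposition~\ref{prop: 1}) and of height at most $r(p^{r}-1)\le d(p^{d}-1)\le d^{2}p^{d+1}$, as claimed.

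The crux, compared to Proposition~\ref{prop: 1}, is the observation that $M$ has degree $\le p-1$: this is what forces its Frobenius twists $M^{(p^{j})}$, and hence the partial products $M_k$ and the vectors $u_k$, to have degrees that sum geometrically to $O(p^{r})$ rather than growing out of control, and it is precisely this that keeps the final height bound polynomial in $p$ with exponent and constant depending only on $d$. The rest is bookkeeping; the only mildly delicate point is the last extraction of a low-degree polynomial relation among the $u_k$'s, but this is the standard minor construction, from which the estimate $\deg\lambda_k\le r(p^{r}-1)$ is immediate.
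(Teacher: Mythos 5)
Your argument is correct, and it in fact gives the slightly sharper height bound $r(p^{r}-1)\le d(p^{d}-1)$. It follows the same skeleton as the paper's proof -- the Frobenius/Cartier decomposition of a basis of $\Delta(W)$ with a transition matrix whose entries are polynomials of degree at most $p-1$, followed by a Cramer-rule dependence among $r+1$ vectors with entries of controlled degree (the paper isolates this last step as Lemma~\ref{lem: x}) -- but the middle of the argument runs in the opposite direction. The paper writes ${\bf v}(x)=A(x){\bf v}(x^p)$, inverts $A$, and iterates backwards, expressing ${\bf v}(x^{p^{s+1}})$ in terms of ${\bf v}(x)$; the degree bookkeeping then goes through the minors and the determinant of $A$, and the final relation is $\sum_i Q_i(x)g_1(x^{p^i})=0$ with $g_1=\Delta(f)$. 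You iterate forwards, ${\bf f}=M_k{\bf f}^{(p^k)}$, and express $\Delta(f)^{p^k}$ as a $K[x]$-linear combination of the $f_l^{p^r}$ with coefficients of degree at most $p^r-1$. This buys three things: you never need $M$ to be invertible (you assert it but never use it); you avoid the determinant and minor estimates for $A^{-1}$, which is why you land on $r(p^r-1)$ rather than the paper's $r^2p^{r+1}$; and your final relation is literally a nonzero polynomial $\sum_k \lambda_k Y^{p^k}$ vanishing at $\Delta(f)$, which is exactly what the definition of height asks for, whereas the paper's relation involves $g_1(x^{p^i})$, which equals $g_1(x)^{p^i}$ only after twisting coefficients by Frobenius -- your explicit use of the twists $M^{(p^j)}$ handles this point more carefully.

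One slip in your setup: a $K(x)$-basis of the $K(x)$-span of $\Delta(W)$, even if chosen inside $\Delta(W)$, need not be a $K$-basis of $\Delta(W)$, because $K$-linearly independent Laurent series can be $K(x)$-linearly dependent (think of $1$ and $x$); so ``being $K(x)$-linearly independent, also a $K$-basis'' is not justified, and the $K(x)$-span may even have dimension strictly less than $r$. This is harmless for your proof: the only properties you actually use are that $f_1,\ldots,f_r$ span $\Delta(W)$ over $K$ (so that $\Lambda_\ell(f_i)$ and $\Delta(f)$ have constant coordinates) and that $r\le d$, so you should simply take a $K$-basis of $\Delta(W)$ from the outset, as the paper does, and delete the unused invertibility claim. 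Incidentally, invertibility of the transition matrix from a mere $K$-basis is exactly the delicate point in the paper's backward iteration, and your forward route sidesteps it entirely.
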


\begin{proof} Let $f\in W$. We already showed in the proof of Proposition \ref{prop: 1} that there is a  $K$-vector space $\Delta(W)$ 
of dimension $r\leq d$ that is invariant under the action of the Cartier operators and that contains $\Delta(f)$.  
Let$\{g_1,\ldots , g_r\}$ be a $K$-basis of $\Delta(W)$ with $g_1=\Delta(f)$. 
Then we have
$$\Lambda_i(g_j(x)) = \sum_{k=1}^r c_{i,j}^{(k)} g_k(x)$$ for some constants $c_{i,j}^{(k)}$ in $K$.
Furthermore, we have that 
$$g_j(x) = \sum_{i=0}^{p-1} x^i  \Lambda_i(g_j)(x^p)\, .$$ 
We thus see that each
$g_j(x)$ can be expressed as a polynomial-linear combination of $g_1(x^p),\ldots ,g_r(x^p)$ in which the polynomials 
have degrees uniformly bounded by $p-1$. In other words, there is a matrix-valued function $A(x)\in M_r(K[x])$, in which 
every entry has degree at most $p-1$, such that
$${\bf v}(x) = A(x){\bf v}(x^p),$$ where
${\bf v}(x) :=[g_1(x),\ldots ,g_r(x)]^T$.  
Note that $\det(A(x))$ is nonzero, since if it were, we would have a nonzero row vector ${\bf w}(x)$ such that
${\bf w}(x)A(x)=0$, which would give $w(x){\bf v}(x)=0$, contradicting the independence of $g_1,\ldots ,g_r$.
Then $B(x)=A(x)^{-1}$ is a matrix whose entries are rational functions.  Using the formula for the inverse in terms of minors, 
we see that the entries of $B(x)$ are each of the form $c(x)/d(x)$, where $c(x)$ is a polynomial of degree at most $(r-1)(p-1)$ and 
$d(x)$ is the determinant of $A(x)$, which is a nonzero polynomial of degree at most $r(p-1)$.

By induction, we see that
$${\bf v}(x^{p^{s+1}})=B(x^{p^s})\cdots B(x){\bf v}(x) \, ,$$
 for every nonnegative integer $s$.  
Now let $e_i$ denote the $r\times 1$ column vector whose $i$th coordinate is $1$ and whose other coordinates are $0$.
Then $e_1^T, e_1^T B(x),\ldots ,e_1^T B(x^{p^r})\cdots B(x)$ are $r+1$ row vectors of length $r$ with rational function coordinates.  
Moreover, all the coordinates can be written over the common denominator $d(x)\cdots d(x^{p^r})$, which has degree at most $r(p-1)+r(p-1)p+\cdots +r(p-1)p^r<rp^{r+1}$, and the numerators all have degrees bounded by $rp^{r+1}$. 
It follows that they are linearly dependent, and by clearing the denominator and applying Lemma \ref{lem: x}, 
we see that they satisfy a non-trivial dependence relation 
$$\sum_{i=0}^r Q_i(x) e_1^T B(x^{p^i})\cdots B(x) \ = \ 0 \,,$$ in which $Q_0,\ldots ,Q_r$
are polynomials whose degrees are all bounded by $r^2p^{r+1}\le d^2p^{d+1}$. 
This gives
$$\sum_{i=0}^r Q_i(x) e_1^T B(x^{p^i})\cdots B(x){\bf v}(x) \ = \ 0 \, ,$$
and hence
$$\sum_{i=0}^r Q_i(x) g_1(x^{p^i}) \ = \ 0 \,.$$
 Since $g_1=\Delta(f)$, we get the required bound on the height. 
 \end{proof}
 
\begin{lem} \label{lem: x} Let $K$ be a field and let $r$ and $H$ be natural number.  
Suppose that ${\bf w}_0,\ldots ,{\bf w}_{r}$ are $r+1$ row vectors in $K[x]^r$ whose coordinates 
all have degree at most $H$.  Then there exists a non-trivial dependence relation 
$$\sum_{i=0}^{r} Q_i(x) {\bf w}_i \ = \ 0,$$ in which $Q_0(x),\ldots ,Q_r(x)$
are polynomials in $K[x]$ whose degrees are all bounded by $Hr$. 
\end{lem}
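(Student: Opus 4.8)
The plan is a straightforward dimension count. Fix the integer $D := Hr$ and look for the polynomials $Q_i$ among those of degree at most $D$, writing $Q_i(x) = \sum_{k=0}^{D} c_{i,k} x^k$ with the coefficients $c_{i,k}$ regarded as unknowns in $K$; there are $(r+1)(D+1)$ of them in total. For each coordinate $\ell \in \{1,\ldots,r\}$, let $w_{i,\ell}(x) \in K[x]$ denote the $\ell$-th entry of ${\bf w}_i$, which by hypothesis is a polynomial of degree at most $H$. The vector identity $\sum_{i=0}^{r} Q_i(x){\bf w}_i = 0$ is then equivalent to the $r$ scalar polynomial identities $\sum_{i=0}^{r} Q_i(x) w_{i,\ell}(x) = 0$ for $\ell = 1,\ldots,r$.

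Next, I would observe that each such scalar identity asserts that a polynomial of degree at most $D+H$ vanishes identically, which amounts to imposing $D+H+1$ homogeneous linear conditions on the unknowns $c_{i,k}$, one for each coefficient. Hence the whole system consists of at most $r(D+H+1)$ homogeneous linear equations in the $(r+1)(D+1)$ unknowns. Since
$$
(r+1)(D+1) - r(D+H+1) = (D+1) - rH = (Hr+1) - rH = 1 > 0 \, ,
$$
the number of unknowns strictly exceeds the number of equations, so the system admits a nonzero solution $(c_{i,k})$. The associated polynomials $Q_0,\ldots,Q_r$ are not all zero, each has degree at most $Hr$, and together they satisfy $\sum_{i=0}^{r} Q_i(x){\bf w}_i = 0$, which is exactly the asserted relation.

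There is no genuine obstacle here; the only point requiring a little care is the bookkeeping in the count, namely verifying that the choice $D = Hr$ is precisely what makes the number of unknowns exceed the number of constraints — the surplus being exactly $1$ — and noting that it is the \emph{strict} inequality that forces the dependence relation to be nontrivial rather than merely guaranteeing the existence of some solution. Equivalently, one may phrase the argument by treating $(Q_0,\ldots,Q_r) \mapsto \sum_{i=0}^{r} Q_i {\bf w}_i$ as a $K$-linear map from a space of dimension $(r+1)(D+1)$ into a space of dimension $r(D+H+1)$ and invoking the rank--nullity theorem to produce a nonzero kernel element.
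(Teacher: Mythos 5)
Your proof is correct, and the bookkeeping checks out: with $D=Hr$ the map $(Q_0,\dots,Q_r)\mapsto\sum_i Q_i\,{\bf w}_i$ goes from a $K$-vector space of dimension $(r+1)(D+1)$ to one of dimension at most $r(D+H+1)$, and indeed $(r+1)(D+1)-r(D+H+1)=D+1-rH=1>0$, so the kernel is nonzero and any nonzero kernel element yields the desired nontrivial relation with $\deg Q_i\le Hr$. This is, however, a genuinely different argument from the paper's. The paper proceeds constructively: it takes $j<r$ maximal with ${\bf w}_0,\dots,{\bf w}_j$ linearly independent, reduces (after a harmless choice of coordinates) to a nonsingular $(j+1)\times(j+1)$ submatrix, and solves for ${\bf w}_{j+1}$ via Cramer's rule, so the $Q_i$ are explicit $(j+1)\times(j+1)$ minors with polynomial entries of degree at most $H$, whence the bound $H(j+1)\le Hr$; the remaining $Q_k$ are set to zero. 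Your dimension-count (Siegel-lemma style) argument is more elementary -- no determinants, no reduction to a nonsingular minor, works verbatim over any field -- and it makes transparent exactly why $D=Hr$ is the right threshold. What the paper's route buys in exchange is an explicit determinantal formula for the relation (for instance one knows a specific $Q_{j+1}$ equal to a nonzero determinant, and that the relation involves only ${\bf w}_0,\dots,{\bf w}_{j+1}$), though none of this extra information is used downstream in Proposition 5.2, so for the purposes of the paper the two proofs are interchangeable.
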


\begin{proof} Let $j$ be the largest natural number for which ${\bf w}_0,\ldots  ,{\bf w}_j$ are 
linearly independent.  Then $j<r$.   Since some $(j+1)\times (j+1)$ minor of the $(j+1)\times r$ 
matrix whose $i$th row is ${\bf w}_i$ is nonzero, it is no loss of generality to assume that the 
``truncated'' row vectors obtained by taking the first $j+1$ coordinates of each of ${\
 w}_0,\ldots ,{\bf w}_j$ are linearly independent.  Let ${\bf w}_0',\ldots ,{\bf w}_r'$ denote the truncated vectors 
of length $j+1$.

Using Cramer's rule, we see that there is a non-trivial solution $[P_0,\ldots ,P_{j+1}]$ to the vector equation
$$\sum_{i=0}^{j+1} P_i(x) {\bf w}_i' \ = \ 0,$$ in which $P_{j+1}=-1$ and for $k\le j$, $P_k$ is given by a ratio of 
two $j\times j$ determinants; the denominator is the determinant of the $(j+1)\times (j+1)$ matrix whose $i$th row is ${\bf w}_i'$ 
and the numerator is the determinant of the $(j+1)\times (j+1)$ matrix whose $i$th row is ${\bf w}_i'$ unless $i=k$, 
in which case the row is given by ${\bf w}_{j+1}'$.  We note that the degrees of the numerators and of the common 
denominator are all bounded by $H(j+1)\le Hr$.  By clearing the common denominator, we get a polynomial solution
$$\sum_{i=0}^{j+1} Q_i(x) {\bf w}_i' \ = \ 0,$$ in which the $Q_i$ have degrees uniformly bounded by $Hr$.
By construction, 
$$\sum_{i=0}^{j+1} Q_i(x) {\bf w}_i \ = \ 0,$$ and the result now follows by taking $Q_k(x)=0$ for $j+2\le k\le r$.  
\end{proof}

\section{Rationalization of algebraic Laurent series}\label{sec: rat}

Throughout this section, $K$ will denote an arbitrary field. It is known that every algebraic power series in  
$K[[x_1,\ldots,x_n]]$ arises as the diagonal of a rational power series  in $2n$ variables. 
This result is due to  Denef and Lipshitz \cite{DL} 
who used an idea of Furstenberg \cite{Fur}. It is inefective in the sense that it does not say  how large the height of the rational function can be with respect to the height and degree of the algebraic power series we start with. 
In order to establish our main result, we need to prove an effective version of this rationalization process 
for algebraic Laurent series in $\KK$. Though our approach differs from the one of Denef and Lipshitz it is 
also based on Furstenberg's pioneering work.

\medskip

Let us first recall some notation. 
Given a Laurent series in $2n$ variables 
$$f(x_1,\ldots ,x_{2n}):=\sum_{(i_1,\ldots,i_{2n})\in\mathbb Z^{2n}} 
a(i_1,\ldots,i_{2n})x_1^{i_1}\cdots x_{2n}^{i_{2n}}
 \in K\langle\langle x_1,\ldots,x_{2n} \rangle\rangle \, ,
 $$
we define the diagonal operator $\Delta_{1/2}$ from $K\langle\langle x_1,\ldots,x_{2n} \rangle\rangle$ into  
 $\KK$ by 
 $$
 \Delta_{1/2}(f) := \sum_{(i_1,\ldots,i_n)\in\mathbb Z^n} a(i_1,\ldots,i_n, i_1,\ldots,i_n)x_1^{i_1}\cdots x_n^{i_n}
 \, .
 $$ 

We are now ready to state the main result of this section. 

\begin{thm}\label{theo:mainrat} Let $d$ and $h$ be natural numbers and let $f(x_1,\ldots,x_n) \in \KK$ 
be an algebraic Laurent series of degree $d$ and height at most $h$.   
Then there is an explicit number $N(n,d,h)$ depending only on $n$, $d$ and $h$, and a rational Laurent power series  
$R \in \KKxy$ of height at most $N(n,d,h)$ such that 
$f=\Delta_{1/2}(R)$. 
\end{thm}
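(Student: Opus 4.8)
The plan is to prove this by induction on the number of variables, implementing the scheme outlined in Section~\ref{sec: Fur}: one peels off the variables $x_n,x_{n-1},\dots,x_1$ one at a time, applying Furstenberg's formula (Proposition~\ref{prop: Fur}) in each of them over the field of multivariate Laurent series in the variables not yet treated. Since Furstenberg's construction introduces a fresh auxiliary variable at every step, I would first state a more flexible version allowing a set of \emph{passive} variables $u_1,\dots,u_l$ that are already known to occur rationally and are merely carried along: for all $m,l\ge 0$, every $g\in K\langle\langle x_1,\dots,x_m,u_1,\dots,u_l\rangle\rangle$ that is algebraic of degree $\le d$ and height $\le h$ and lies in $K\langle\langle x_1,\dots,x_m\rangle\rangle(u_1,\dots,u_l)$ should be of the form $\Delta_{1/2}^{(x,y)}(G)$ for some rational $G\in K\langle\langle x_1,\dots,x_m,y_1,\dots,y_m,u_1,\dots,u_l\rangle\rangle$ of height at most an explicit $N(m,l,d,h)$, where $\Delta_{1/2}^{(x,y)}$ diagonalizes only the pairs $(x_i,y_i)$. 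Theorem~\ref{theo:mainrat} is the case $l=0$, and the base case $m=0$ is trivial since then $g$ is already rational.

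For the inductive step I would view $g$ as an element of $K\langle\langle x_1,\dots,x_{m-1},u_1,\dots,u_l\rangle\rangle((x_m))$ (i.e. after placing $x_m$ last in the ordering). Multiplying by a suitable power $x_m^{-\nu}$ — which by Lemma~\ref{lem: nu} costs only a controlled increase of the height and is restored at the end by a monomial factor $x_m^{\nu}y_m^{\nu}$ — one may assume $\nu_{x_m}(g)=0$. Writing $Q_0(x_m,z):=P(\dots,g_0+z)$, where $P$ is a minimal polynomial for $g$ and $g_0:=g\vert_{x_m=0}$, the resultant $\mathrm{Res}_z(Q_0,\partial Q_0/\partial z)$ is a \emph{nonzero} element of $K\langle\langle x_1,\dots,x_{m-1},u_1,\dots,u_l\rangle\rangle[x_m]$ — nonzero precisely because an algebraic Laurent series is automatically separable over the field of rational functions, a point one checks by observing that the extension $K\langle\langle\cdot\rangle\rangle/K(\cdot)$ is separable. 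Letting $a$ be the order at $x_m=0$ of this resultant, a Sylvester-determinant estimate bounds $a$ by a polynomial in $d$ and $h$, and one has a decomposition
\[
g \;=\; \sum_{k=0}^{a} g_k\,x_m^{\,k} \;+\; x_m^{\,a}\,h ,
\]
where $g_0,\dots,g_a$ are low-order coefficients of $g$ in $x_m$ — algebraic over the appropriate rational function field, of degree and height controlled (doubly exponentially in $a$, hence still only in terms of $d,h$) by Lemma~\ref{lem: weirdbounds}, and rational in the $u_i$ — while $h$ vanishes at $x_m=0$ and, thanks to the choice of $a$, satisfies the hypotheses of Proposition~\ref{prop: Fur} over $K\langle\langle x_1,\dots,x_{m-1},u_1,\dots,u_l\rangle\rangle$. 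Applying that proposition yields a new variable $y_m$ and a function $R_h$, rational in $x_m,y_m$ with coefficients in the finite extension $K(x_1,\dots,x_{m-1})[g_0,\dots,g_a](u_1,\dots,u_l)$, with $h=\Delta_{(x_m,y_m)}(R_h)$; by Lemma~\ref{lem: bound}, $R_h$ is algebraic of controlled degree and height, with $m-1$ active variables and $l+2$ passive ones ($u_1,\dots,u_l,x_m,y_m$).

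It remains to invoke the induction hypothesis on each $g_k$ (with $l$ passive variables) and on $R_h$ (with $l+2$ passive variables), producing rational $G_k$ and $\widehat{R_h}$, of controlled height, with $g_k=\Delta_{1/2}^{(x,y),\,i\le m-1}(G_k)$ and $R_h=\Delta_{1/2}^{(x,y),\,i\le m-1}(\widehat{R_h})$. Because diagonal operations over disjoint blocks of variables compose, inserting the monomials $x_m^{\nu+k}y_m^{\nu+k}$ and $x_m^{\nu+a}y_m^{\nu+a}$ and summing shows that $g=\Delta_{1/2}^{(x,y),\,i\le m}(G)$ with
\[
G \;:=\; \sum_{k=0}^{a} x_m^{\nu+k}y_m^{\nu+k}\,G_k \;+\; x_m^{\nu+a}y_m^{\nu+a}\,\widehat{R_h},
\]
a rational function; a final application of Lemma~\ref{lem: bound} bounds the height of $G$ in terms of $a$, of $\lvert\nu\rvert$, and of the heights given by the induction hypothesis, which closes the recursion defining $N(m,l,d,h)$, and hence $N(n,d,h):=N(n,0,d,h)$.

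The main obstacle is to keep every one of these quantities effective and, above all, independent of $p$; this is exactly where the argument improves on Harase's, and it requires using the estimates of Lemmas~\ref{lem: bound}, \ref{lem: nu} and \ref{lem: weirdbounds} with care, since they compound — roughly one more exponential at each of the $n$ peeling steps — into a genuinely enormous, but $p$-free, constant. The conceptual crux is the separability of algebraic Laurent series over $K(x_1,\dots,x_n)$, which is what makes the relevant resultants nonzero and of bounded order, and is the precise reason the rationalization must take place in the field of multivariate Laurent series rather than, say, among Puiseux series. A secondary, purely organizational point is the handling of the lexicographic orderings on the various Laurent fields: before peeling off $x_m$ one places it last in the ordering, which is harmless because every series arising at an intermediate stage is algebraic over the rational function field and therefore admits a generalized Laurent expansion for any chosen ordering of the variables.
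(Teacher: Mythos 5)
Your skeleton---induction on the number of variables, normalizing the valuation via Lemma~\ref{lem: nu}, using the resultant of $P$ and $\partial P/\partial y$ to split off a polynomial head whose coefficients are controlled by Lemma~\ref{lem: weirdbounds}, and applying Furstenberg's formula to the tail over the Laurent field in the remaining variables---is exactly the paper's (Lemmas~\ref{lem: Furst} and~\ref{lem: diag}). The divergence, and the gap, lies in how the freshly introduced variables are carried through the induction. You strengthen the statement with ``passive'' variables and then apply the inductive hypothesis directly to the Furstenberg output $R_h$, which still contains the pair $(x_m,y_m)$ in its denominator with coefficients that are Laurent series in $x_1,\ldots,x_{m-1}$; to peel the next variable you must re-expand such an object with $x_{m-1}$ placed last, and you dismiss this as harmless because algebraic elements ``admit a generalized Laurent expansion for any chosen ordering.'' Existence of an expansion is not the issue: the expansion depends on the ordering, and an element of the Laurent field for one ordering need not lie in the field for another ordering \emph{with the same coefficients}. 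Since the whole proof is a chain of coefficient-extraction identities, changing the expansion destroys them. Concretely, the rationalization step can produce a denominator such as $1-x_{m-1}^{-1}x_my_m$: its expansion as a power series in $(x_m,y_m)$ is $\sum_{k\ge 0}x_{m-1}^{-k}x_m^{k}y_m^{k}$, which is not a Laurent series in $x_{m-1}$ at all, whereas the expansion of the same rational function with $x_{m-1}$ outermost is $-\sum_{k\ge 1}x_{m-1}^{k}x_m^{-k}y_m^{-k}$, a different series with a different diagonal. Hence the inductive call on $R_h$, and the claim that diagonals over disjoint blocks of variables compose, are not licensed as stated; the same caveat affects your unproved assertion that the $x_m$-adic coefficients $g_k$ are rational in the passive variables.

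The paper's proof contains precisely the device you are missing, namely Lemma~\ref{lem: XX}: after Lemma~\ref{lem: diag} one multiplies numerator and denominator by the conjugates of the algebraic coefficients of the denominator, so that $R=A/B$ with $B\in K[x_1,\ldots,x_n,y_n]$ an honest polynomial over $K$ and $A\in\KKnn[x_n,y_n]$ whose coefficients $\gamma_{i,j}\in\KKnn$ involve no passive variables. The induction is then applied only to the $\gamma_{i,j}$---pure $(n-1)$-variable Laurent series, so no reordering ever occurs---and the passive pair is reinstated at the end through the explicit rational function $\sum_{i,j}R_{i,j}\,x_n^{i}y_n^{j}/B(x_1y_1,\ldots,x_{n-1}y_{n-1},x_n,y_n)$, times a monomial correcting the valuation, with the height controlled by the degree bounds of Lemma~\ref{lem: XX}. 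If you wish to keep the passive-variable formulation you must insert such a denominator-clearing step (and then actually verify the rationality of the $g_k$ in the passive variables); without it the inductive step fails. Your appeal to separability to make the resultant nonzero is fine (the paper needs the same nonvanishing), but it is a side point compared with the expansion-consistency issue above.
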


In order to prove Theorem \ref{theo:mainrat}, we will prove three auxiliary results. 
We also need to introduce a third type of diagonal operators. 
Given a Laurent series in $n+1$ variables 
$$
f(x_1,\ldots ,x_n,y) := \sum_{ (i_1,\ldots,i_{n+1} ) \in\mathbb Z^{n+1} } 
a(i_1, \ldots, i_{n+1} ) x_1^{i_1}\cdots x_{n}^{i_n}y^{i_{n+1}}
 $$
 in $K\langle\langle x_1,\ldots,x_n,y \rangle\rangle$,  
we define the diagonal operator $\Delta_{x_n,y}$ from $K\langle\langle x_1,\ldots,x_n,y \rangle\rangle$ 
into  $\KK$ by 
 $$
 \Delta_{x_n,y}(f) := \sum_{(i_1,\ldots,i_n)\in\mathbb Z^n} a(i_1,\ldots,i_n, i_n)x_1^{i_1}\cdots x_n^{i_n}
 \, .
 $$ 

Our first auxiliary result is essentially (an efective version of) Furstenberg's lemma. 
We include a proof here for completeness. 

\begin{lem} \label{lem: Furst}
Let $P(x_n,y) \in K\langle\langle x_1,\ldots,x_{n-1}\rangle\rangle [x_n,y]$ be a polynomial 
of degree at most $d$ in $x_n$ and $y$ and suppose that all coefficients appearing in $P$ are algebraic elements of 
$K\langle\langle x_1,\ldots,x_{n-1}\rangle\rangle$ of degree at most $d_0$ and height at most $h$.  
Suppose that $f(x_1,\ldots ,x_n)\in K\langle\langle x_1,\ldots,x_n \rangle\rangle$  
is a solution to the equation $P(x_n,f)=0$ with $\nu_n(f)\geq 1$ and that $\partial{P}/\partial{y}$ does not belong 
to the ideal $(x_n,y)K\langle\langle x_1,\ldots,x_{n-1}\rangle\rangle[x_n,y]$.    
Then 
$$
R(x_n,y):=
y^2 \frac{\partial{P}}{\partial{y}}(x_ny,y)/P(x_ny,y)
$$ 
is a rational function of the variables $x_n$ and $y$ that belongs to 
$K\langle\langle x_1,\ldots,x_{n-1}\rangle\rangle[[x_n,y]]$ and such that 
$$
\Delta_{x_n,y}\left(R)\right)=f \, .
$$ 
Furthermore,  the numerator and denominator of $R$ 
have total degree at most  $2d+1$ in $x_n$ and $y$ and their coefficients are algebraic elements of 
$K\langle\langle x_1,\ldots,x_{n-1}\rangle\rangle$ of degree at most $d_0$ and height at most $h$.  
\end{lem}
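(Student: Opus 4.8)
The plan is to follow Furstenberg's original substitution trick, working over the coefficient field $k := K\langle\langle x_1,\ldots,x_{n-1}\rangle\rangle$, and then to verify (a) that the substitution $x_n \mapsto x_ny$ turns the rational function into a genuine element of $k[[x_n,y]]$, (b) that the $\Delta_{x_n,y}$-diagonal of this power series recovers $f$, and (c) that the degree/height bookkeeping in the statement holds. First I would set up notation: write $P(x_n,y) = \sum_{i,j} a_{ij} x_n^i y^j$ with $a_{ij} \in k$, and record that, since $\nu_n(f) \ge 1$, the series $f$ has no constant term as an element of $k[[x_n]]$; I would also record the hypothesis that $\partial P/\partial y \notin (x_n,y)k[x_n,y]$ means precisely that $a_{01} \ne 0$ (the coefficient of $y^1 x_n^0$ in $P$), which is the nonvanishing needed to run Furstenberg's argument. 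Note $P(0,0)=a_{00}$: differentiating $P(x_n,f)=0$ at the origin and using $f(0,0)=0$ gives $a_{00}=0$ as well, so $P$ itself has no constant term.

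Next I would carry out the substitution. Set $Q(x_n,y) := P(x_ny, y)$. Since $P$ has no constant term, every monomial $x_n^i y^j$ of $P$ with $(i,j)\ne(0,0)$ becomes $x_n^i y^{i+j}$ with $i+j \ge 1$; moreover the unique term contributing $y^1$ with no $x_n$ is $a_{01}y$, so $Q(x_n,y) = a_{01}y\bigl(1 + \text{(higher order terms)}\bigr)$ as an element of $k[[x_n,y]]$. Because $a_{01}$ is a nonzero element of the field $k$, it is invertible, so $y/Q(x_n,y) \in k[[x_n,y]]$ is a well-defined power series, and hence
$$
R(x_n,y) = y^2\,\frac{\partial P}{\partial y}(x_ny,y)\,\Big/\,P(x_ny,y) = y\cdot\Bigl(\tfrac{\partial P}{\partial y}(x_ny,y)\Bigr)\cdot\Bigl(\tfrac{y}{Q(x_n,y)}\Bigr)
$$
is indeed an element of $k[[x_n,y]] = K\langle\langle x_1,\ldots,x_{n-1}\rangle\rangle[[x_n,y]]$. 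For the degree bound: $P$ has total degree at most $2d$ in $x_n,y$ after the substitution $x_n\mapsto x_ny$ (each variable degree $\le d$ becomes a combined degree $\le 2d$), $\partial P/\partial y$ has total degree at most $2d-1$, so $y^2 \cdot \partial P/\partial y(x_ny,y)$ has total degree at most $2d+1$ and $P(x_ny,y)$ total degree at most $2d \le 2d+1$; the coefficients of numerator and denominator are among the $a_{ij}$ (up to integer multiples from differentiation, which do not change algebraic degree or height in this sense), so they remain algebraic over $k$ of degree at most $d_0$ and height at most $h$.

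The main substantive step — and the one I expect to be the real obstacle — is verifying the diagonal identity $\Delta_{x_n,y}(R) = f$. Here I would follow the contour-integral heuristic made algebraic: expand $1/P(x_ny,y)$ using the factorization $P(T,y) = a_{01}y\cdot\prod(\text{roots in }T)$, or more robustly, use the logarithmic-derivative identity
$$
R(x_n,y) = \frac{y^2\,\partial_y[P(x_ny,y)]}{P(x_ny,y)} - \frac{x_n y^3\,\partial_T[P](x_ny,y)}{P(x_ny,y)},
$$
wait — cleaner is to use that, with $u = x_n y$ held as a formal ``slow'' variable and $y$ as the ``fast'' one, the residue of $y^{-1}\,\partial_y\log P(u,y)$ picks out the root $y = f(x_1,\ldots,x_{n-1},u)$ of $P(u,y)=0$ (this is exactly Furstenberg's observation, valid because $a_{01}\ne 0$ guarantees a unique such root vanishing at the origin with $\nu_n \ge 1$). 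Concretely, writing $P(u,y) = c(u)\prod_{\ell}(y - \rho_\ell(u))$ over a splitting field, with exactly one root $\rho_1 = f$ satisfying $\rho_1(0)=0$ and $\rho_\ell(0)\ne 0$ for $\ell \ne 1$, one computes
$$
\frac{\partial P}{\partial y}(u,y)\Big/P(u,y) = \sum_\ell \frac{1}{y - \rho_\ell(u)},
$$
so $y^2\,\partial_y P / P = \sum_\ell y^2/(y-\rho_\ell)$; expanding the $\ell=1$ term as $y^2/(y - \rho_1) = y + \rho_1 + \rho_1^2/y + \cdots$ and the $\ell\ne 1$ terms as power series in $y/\rho_\ell$, and then substituting $u = x_n y$ and extracting the coefficients $a(i_1,\ldots,i_n,i_n)$ (i.e. matching the $x_n$-exponent to the $y$-exponent), the only surviving contribution comes from the $\rho_1 = f$ term, and it reconstructs $f(x_1,\ldots,x_n)$ exactly. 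I would present this computation carefully since the ``other roots contribute nothing to the diagonal'' claim is where one must be precise about which field the expansions live in and why the support conditions force the cancellation; the hypotheses $\nu_n(f)\ge 1$ and $a_{01}\ne 0$ are exactly what make this clean. $\qquad\blacksquare$
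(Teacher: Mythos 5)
The parts of your argument that you actually carry out are fine and essentially agree with the paper: the reduction $a_{00}=0$ (just evaluate $P(x_n,f)=0$ modulo $x_n$; no differentiation is needed), the observation that $\partial P/\partial y\notin(x_n,y)$ means $a_{01}\neq 0$, the factorization $P(x_ny,y)=y\cdot(\text{unit of } K\langle\langle x_1,\ldots,x_{n-1}\rangle\rangle[[x_n,y]])$ giving $R\in K\langle\langle x_1,\ldots,x_{n-1}\rangle\rangle[[x_n,y]]$, and the degree/height bookkeeping. (One small caution: the hypothesis must be read as \emph{total} degree $\le d$, so that $i+j\le d$ gives $2i+j\le 2d$ after $x_n\mapsto x_ny$; your parenthetical "each variable degree $\le d$" reading would only yield $3d$.)

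The gap is exactly at the point you flag and defer: the identity $\Delta_{x_n,y}(R)=f$. You split $P$ completely into linear factors $\prod_\ell(y-\rho_\ell)$ over a splitting field of $K\langle\langle x_1,\ldots,x_{n-1}\rangle\rangle((x_n))$ and expand root by root. This can be made rigorous, but the deferred step is the substance of the lemma: in positive characteristic the $\rho_\ell$ are not Puiseux series but live in a Hahn-type generalized power series field (exponents with unbounded denominators do occur, e.g.\ for Artin--Schreier-type roots); when the leading coefficient of $P$ in $y$ vanishes at $x_n=0$ some roots have negative valuation, so "$\rho_\ell(0)\neq 0$" is not even meaningful as stated --- the correct claim is that exactly one root has positive valuation (Hensel/Newton polygon, and it equals $f$ by uniqueness) while the others satisfy $v(\rho_\ell)\le 0$; and one must extend the two-variable series ring and the operator $\Delta_{x_n,y}$ to this larger field and check that the partial-fraction identity and the term-by-term diagonal extraction survive there. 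The paper sidesteps all of this with a much lighter device: divide $P$ by the monic factor $(y-f)$ in $K\langle\langle x_1,\ldots,x_{n-1}\rangle\rangle[[x_n]][y]$ to get $P=(y-f)Q$; reducing $\partial P/\partial y=Q+(y-f)\partial Q/\partial y$ modulo $(x_n,y)$ and using the hypothesis shows $Q(0,0)\neq 0$, so $Q$ is a unit of $K\langle\langle x_1,\ldots,x_{n-1}\rangle\rangle[[x_n,y]]$; then $R=y\left(1-y^{-1}f(x_1,\ldots,x_{n-1},x_ny)\right)^{-1}+y^2\,(\partial_y Q/Q)(x_ny,y)$, the second summand has every monomial with $y$-exponent strictly larger than its $x_n$-exponent (so zero diagonal), and in the geometric-series expansion of the first summand only the $j=1$ term meets the diagonal, giving $f$. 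Replacing your splitting-field computation by this single-root factorization closes the gap with no extra machinery; if you insist on the all-roots route, you must actually build the generalized-series framework you allude to.
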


\begin{proof} We first observe that our assumptions on $P$ ensure that 
both polynomials $y^2 \partial{P}/\partial{y}(x_ny,y)$ and $P(x_ny,y)$ have total degree at most 
$2d+ 1$ in $x_n$ and $y$. It also implies that the coefficients of these polynomials are algebraic 
elements of $K\langle\langle x_1,\ldots, x_{n-1}\rangle\rangle$ 
of degree at most $d_0$ and height at most $h$.

Since $y=f(x_1,\ldots ,x_n)$ is a solution to the equation
$P(x_n,y)=0$, we see that 
\begin{equation}
P(x_n,y)=(y-f(x_1,\ldots ,x_n))Q(x_1,\ldots ,x_n,y) \, ,
\label{eq: 1} 
\end{equation}
for some nonzero polynomial 
$Q\in K\langle\langle x_1,\ldots x_{n-1}\rangle\rangle[[x_n]][y]$.  
Differentiating (\ref{eq: 1}) with respect to $y$, we get that 
\begin{equation}\label{eq: 2} 
\frac{\partial{P}}{\partial{y}} =  Q + (y - f)\frac{\partial{Q}}{\partial{y}} \, \cdot
\end{equation}
Looking modulo the prime ideal $(t_d,y)$ and using the fact that $\partial{P}/\partial{y}$ 
does not vanish at $(x_n,y)=(0,0)$, we infer from (\ref{eq: 2}) that  
$Q$ is nonzero when evaluated at $(x_n,y)=(0,0)$.  Hence $Q$ is a unit in the 
power series ring $K\langle\langle x_1,\ldots x_{n-1}\rangle\rangle[[x_n,y]]$.  
Then Equations (\ref{eq: 1}) and (\ref{eq: 2}) give
$$
\frac{1}{P}\cdot \frac{\partial{P}}{\partial{y}} = 
\frac{1}{y-f}  + \frac{1}{Q} \cdot \frac{\partial{Q}}{\partial{y}}  \, \cdot
$$
Thus
\begin{eqnarray*}
R(x_n,y) 
& = & \frac{y^2}{ P(x_1,\ldots ,x_{n-1},x_ny,y) } \cdot \frac{\partial{P}}{\partial{y}}(x_1,\ldots ,x_{n-1},x_ny,y) \\
& = & \frac{y^2}{y-f(x_1,\ldots ,x_{n-1},x_ny)} + \\
& ~ & \frac{y^2}{Q(x_1,\ldots ,x_{n-1},x_ny,y)} \cdot  \frac{\partial{Q}}{\partial{y}}(x_1,\ldots ,x_{n-1},x_ny,y) \\
& = & \frac{y}{1-y^{-1}f(x_1,\ldots ,x_{n-1},x_ny)} +  \\ 
& ~ &  \frac{y^2}{Q(x_1,\ldots ,x_{n-1},x_ny,y)} \cdot  \frac{\partial{Q}}{\partial{y}}(x_1,\ldots ,x_{n-1},x_ny,y) \, .
\end{eqnarray*}

Furthermore, since $\nu_n(f)\geq 1$,  
we have that $1-y^{-1}f(x_1,\ldots ,x_{n-1},x_ny)$ is a unit in the ring 
$K\langle\langle x_1,\ldots x_{n-1}\rangle\rangle[[x_n,y]]$. We thus obtain that 
$R\in K\langle\langle x_1,\ldots x_{n-1}\rangle\rangle[[x_n,y]]$.  
On the other hand, since $1/Q \cdot \partial{Q}/\partial{y}$ belongs to 
$K\langle\langle x_1,\ldots x_{n-1}\rangle\rangle[[x_n,y]]$, 
we clearly have 
$$
\Delta_{x_n,y}\left(y^2  \frac{\partial{Q}}{\partial{y}}(x_1,\ldots ,x_{n-1},x_ny,y)
/Q(x_1,\ldots ,x_{n-1},x_ny,y)\right) = 0 \,.
$$
Thus
\begin{eqnarray*}
\Delta_{x_n,y}(R)
& = & \Delta_{x_n,y} \left(\frac{y}{1-y^{-1}f(x_1,\ldots ,x_{n-1},x_ny)}\right)\\
& = & \sum_{j\ge 0}\Delta_{x_n,y}\left( y^{1-j}f(x_1,\ldots ,x_{n-1},x_ny)\right) \\
& = & \Delta_{x_n,y}\left( f(x_1,\ldots ,x_{n-1},x_ny)\right) \\
& = & f(x_1,\ldots ,x_n) \, .
\end{eqnarray*}
This ends the proof. 
\end{proof}


In the previous result, we make the assumption that our algebraic Laurent series $f$ is a 
root of a polynomial $P$ such that  $\partial{P}/\partial{y}$ does not belong to the ideal 
$(x_n,y)K\langle\langle x_1,\ldots,x_{n-1}\rangle\rangle[x_n,y]$. We now remove this assumption.

\begin{lem} Let $P(x_1,\ldots ,x_n,y)\in K[x_1,\ldots ,x_n][y]$ be a nonzero polynomial of degree 
$d\geq 2$ in $y$ and suppose that the coefficients of $P$ (in $K[x_1,\ldots ,x_n]$) all have degree at most 
$h$.  Let $f$ be an algebraic Laurent series in $\KK$. 
Suppose that $\nu_n(f)=1$ and that $f$ satisfies the polynomial equation $P(x_1,\ldots ,x_n,f)=0$.   
Then there is a rational function $R \in K\langle\langle x_1,\ldots x_{n-1}\rangle\rangle (x_n,y)$ such 
that the following hold. 
 
\begin{itemize} 

\medskip

\item[(i)] One has $f = \Delta_{x_n,y}(R)$ \, .

\medskip

\item[(ii)] The numerator and denominator of $R$ have total degrees at most 
$h(2d-1)(2d+1)+2h+1$ in $x_n$ and $y$.

\medskip

\item[(iii)] The coefficients of $R$ are algebraic Laurent series in  
$K\langle\langle x_1,\ldots x_{n-1}\rangle\rangle$ 
of degree at most  $d^{2^{h(2d-1)}-1}$  
and height at most 
$d^{8dh2^{h(2d-1)}}$.

\end{itemize}
\label{lem: diag}
\end{lem}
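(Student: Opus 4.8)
The plan is to remove the hypothesis of Lemma~\ref{lem: Furst} --- that $\partial P/\partial y$ not vanish at the origin --- by the resultant-and-peeling device sketched in Section~\ref{sec: Fur}, keeping every quantity under explicit control. First I would reduce to the case where $f$ is a simple root of $P$: since algebraic elements of $\KKnn[[x_n]]$ are separable over $K(x_1,\ldots,x_n)$ (if $f^{p}$ lay in $K(x_1,\ldots,x_n)$, its support, viewed in $\KKnn((x_n))$, would lie in $p\mathbb{Z}^{n}$, forcing $f^{p}\in K(x_1,\ldots,x_n)^{p}$ and hence $f\in K(x_1,\ldots,x_n)$), I may replace $P$ by the minimal polynomial of $f$ made primitive over $K[x_1,\ldots,x_n]$; this changes neither the bound $d$ on $\deg_y P$ nor the bound $h$ on the height (a polynomial factor has height at most that of the polynomial), and now $\partial P/\partial y(x_1,\ldots,x_n,f)\neq 0$. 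Then I would introduce $D:=\mathrm{Res}_y(P,\partial P/\partial y)\in K[x_1,\ldots,x_n]$, which is nonzero since $P$ is separable; the associated Sylvester matrix has size at most $(2d-1)\times(2d-1)$ with entries of total degree at most $h$, so $\deg D\le h(2d-1)$ and the integer $a:=\nu_n(D)$ satisfies $a\le h(2d-1)$. The crucial input is the inequality $\nu_n(\partial P/\partial y(x_1,\ldots,x_n,f))\le a$, which I would deduce from the B\'ezout identity $D=UP+V\,\partial P/\partial y$ with $U,V\in K[x_1,\ldots,x_n][y]$ (coming from the adjugate of the Sylvester matrix): evaluating at $y=f$ gives $D=V(x_1,\ldots,x_n,f)\,\partial P/\partial y(x_1,\ldots,x_n,f)$, and $\nu_n(V(x_1,\ldots,x_n,f))\ge 0$ since $\nu_n(f)\ge 0$.

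Next I would peel off the first $a$ terms of the $x_n$-expansion of $f$. Writing $f=\sum_{i=1}^{a}f_i x_n^{i}+x_n^{a}g$ as in~(\ref{eq: decomposition2}) with $r=a$, Lemma~\ref{lem: weirdbounds} gives $f_i\in\KKnn$ algebraic over $K(x_1,\ldots,x_{n-1})$ of degree at most $d^{2^{i-1}}$ and height at most $8^{i}d^{2^{i+1}}h$, and $g\in\KKnn[[x_n]]$ with $\nu_n(g)\ge 1$. With $u:=\sum_{i=1}^{a}f_i x_n^{i}$ I would set
$$\widetilde Q(x_n,y)\ :=\ x_n^{-m}\,P\bigl(x_1,\ldots,x_n,\,u+x_n^{a}y\bigr),$$
where $m$ is the $\nu_n$-valuation of $P(\ldots,u+x_n^{a}y)$ as an element of $\KKnn[[x_n]][y]$, so that $\widetilde Q\in\KKnn[x_n,y]$ has some coefficient that is a unit of $\KKnn[[x_n]]$. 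Then $\widetilde Q(x_n,g)=0$ because $u+x_n^{a}g=f$. Now the $y^{k}$-coefficient of $P(\ldots,u+x_n^{a}y)$ is divisible by $x_n^{ak}$; its $y^{1}$-coefficient equals $x_n^{a}\,\partial P/\partial y(\ldots,u)$, which --- using $\nu_n(f-u)\ge a+1$ and $\nu_n(\partial P/\partial y(\ldots,f))\le a$ --- has valuation exactly $a+\nu_n(\partial P/\partial y(\ldots,f))$; the $y^{0}$-coefficient $P(\ldots,u)$, obtained from $P(\ldots,f)=0$ by expanding in $x_n^{a}g$, has strictly larger valuation; and the $y^{k}$-coefficients with $k\ge 2$ have valuation at least $ak\ge a+\nu_n(\partial P/\partial y(\ldots,f))$. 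Hence $m=a+\nu_n(\partial P/\partial y(\ldots,f))\ge a$, so $\partial\widetilde Q/\partial y$ has nonzero constant term --- exactly the hypothesis of Lemma~\ref{lem: Furst}.

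The main obstacle is to bound the degree and height of the coefficients of $\widetilde Q$: expanding $u^{\le d}$ and invoking Lemma~\ref{lem: bound} term by term is far too lossy. Instead I would observe that every coefficient of $\widetilde Q$ is a polynomial in $f_1,\ldots,f_a$ with coefficients in $K[x_1,\ldots,x_{n-1}]$ of degree at most $h$, hence lies in the single field $E:=K(x_1,\ldots,x_{n-1})(f_1,\ldots,f_a)$. By multiplicativity of degrees,
$$[E:K(x_1,\ldots,x_{n-1})]\ \le\ \prod_{i=1}^{a}\deg f_i\ \le\ d^{\,\sum_{i=1}^{a}2^{i-1}}\ =\ d^{\,2^{a}-1}\ \le\ d^{\,2^{h(2d-1)}-1},$$
so every coefficient of $\widetilde Q$ has degree at most $d^{2^{h(2d-1)}-1}$, which is the degree bound of~(iii). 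For the height I would run the characteristic-polynomial-of-multiplication argument from the proof of Lemma~\ref{lem: bound}: in the tensor product over $K(x_1,\ldots,x_{n-1})$ of the fields $K(x_1,\ldots,x_{n-1})(f_i)$, equipped with its monomial basis, multiplication by a coefficient of $\widetilde Q$ is a matrix of size $d^{2^{a}-1}$ whose entries are rational functions of degree controlled by the heights $8^{i}d^{2^{i+1}}h$ of the $f_i$; its characteristic polynomial then has height at most roughly $8^{a}d^{3\cdot 2^{a}}h$, which is at most $d^{8dh\,2^{h(2d-1)}}$ since $a\le h(2d-1)$ and $d\ge 2$.

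Finally I would apply Lemma~\ref{lem: Furst} to $\widetilde Q$ --- whose coefficients are now controlled --- and to its root $g$, obtaining a rational function
$$R_0(x_n,y)\ :=\ \frac{y^{2}\,\bigl(\partial\widetilde Q/\partial y\bigr)(x_ny,y)}{\widetilde Q(x_ny,y)}\ \in\ \KKnn[[x_n,y]]$$
with $\Delta_{x_n,y}(R_0)=g$, and set
$$R\ :=\ \sum_{i=1}^{a}f_i\,(x_ny)^{i}\ +\ (x_ny)^{a}\,R_0(x_n,y)\ \in\ \KKnn(x_n,y).$$
Using the $K$-linearity of $\Delta_{x_n,y}$ together with $\Delta_{x_n,y}\bigl(c\,(x_ny)^{i}\bigr)=c\,x_n^{i}$ for $c\in\KKnn$ and $\Delta_{x_n,y}\bigl((x_ny)^{a}S\bigr)=x_n^{a}\,\Delta_{x_n,y}(S)$, one gets $\Delta_{x_n,y}(R)=\sum_{i=1}^{a}f_i x_n^{i}+x_n^{a}g=f$, which is~(i). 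The coefficients of $R$ are among $f_1,\ldots,f_a$ and the coefficients of $R_0$ --- which, by Lemma~\ref{lem: Furst}, have the same degree and height as those of $\widetilde Q$ --- so~(iii) follows from the estimates above. For~(ii), the polynomial $\sum f_i(x_ny)^{i}$ has total degree at most $2a$ in $x_n$ and $y$; since $m\ge a$, the polynomial $\widetilde Q$ has bidegree at most $(h+a(d-1),d)$, so Lemma~\ref{lem: Furst} produces $R_0$ with numerator and denominator of total degree at most $2(h+a(d-1))+1$, and multiplying by $(x_ny)^{a}$ and substituting $a\le h(2d-1)$ yields the bound $h(2d-1)(2d+1)+2h+1$ of~(ii).
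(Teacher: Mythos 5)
Your proposal is essentially the paper's own argument: bound the peeling depth by the $x_n$-valuation of ${\rm Res}_y(P,\partial P/\partial y)$ (so $a\le h(2d-1)$ via the Sylvester matrix), peel off the first $a$ coefficients and control them with Lemma~\ref{lem: weirdbounds}, use the B\'ezout identity for the resultant to bound $\nu_n\bigl(\partial P/\partial y(\cdot,f)\bigr)$, renormalize the shifted polynomial so that Lemma~\ref{lem: Furst} applies to the tail $g$, reassemble $R=\sum_i f_i(x_ny)^i+(x_ny)^aR_0$, and obtain the degree part of (iii) from the single field $K(x_1,\ldots,x_{n-1})(f_1,\ldots,f_a)$ together with Lemma~\ref{lem: bound}. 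One point where you deviate is actually an improvement: you divide by $x_n^{m}$ with $m=a+\nu_n\bigl(\partial P/\partial y(\cdot,f)\bigr)$, whereas the paper divides by $x_n^{2r}$ on the strength of the asserted equality $\nu_n\bigl(\partial P/\partial y(\cdot,V)\bigr)=r$; only the inequality $\le r$ is guaranteed (already in the paper's own example $P=y^2-x^2(1-x)$, $f=x\sqrt{1-x}$ one has $r=2$ but $\nu\bigl(\partial P/\partial y(f)\bigr)=1$), so your normalization is the robust one.

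Two places are not actually proved. First, the opening reduction to the separable case: inseparability of $f$ over $K(x_1,\ldots,x_n)$ means its minimal polynomial is a polynomial in $y^p$, equivalently $K(x_1,\ldots,x_n)(f)\neq K(x_1,\ldots,x_n)(f^p)$; it does \emph{not} mean $f^p\in K(x_1,\ldots,x_n)$, so your parenthetical only rules out the purely inseparable degree-$p$ case. What you need is that every element of $\KK$ algebraic over $K(x_1,\ldots,x_n)$ is separable, which is true but requires an argument (linear disjointness of $\KK$ from $K(x_1,\ldots,x_n)^{1/p}$, i.e.\ MacLane's criterion; for perfect $K$, which is all the application uses, this is easy because the monomials $x_1^{j_1/p}\cdots x_n^{j_n/p}$ have supports in distinct cosets of $\mathbb{Z}^n$). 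Without it the resultant could vanish identically and the whole construction stops; the paper sidesteps this by tacitly assuming ${\rm Res}_y(P,\partial P/\partial y)\neq0$, so you at least identified the issue, but your justification is not a proof. Second, the height half of (iii) is asserted rather than derived: the coefficients of $\widetilde Q$ are degree-$\le d$ polynomial expressions in $f_1,\ldots,f_a$, and a genuine pass through Lemma~\ref{lem: bound} (as the paper does via its $h_2$) yields bounds substantially larger than your ``roughly $8^{a}d^{3\cdot 2^{a}}h$''; moreover, for the single-fraction representation of $R$ the numerator's coefficients are sums of products $f_i\cdot(\mbox{coefficients of the denominator of }R_0)$, which costs one more application of Lemma~\ref{lem: bound} (the paper's $h_3$). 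The target $d^{8dh2^{h(2d-1)}}$ is loose enough that this bookkeeping does go through, but as written you have not done it. The same mild sloppiness occurs in (ii) (Lemma~\ref{lem: Furst} gives total degree about $2\deg_{x_n}\widetilde Q+d+1$, not $2\deg_{x_n}\widetilde Q+1$), though the stated bound still absorbs it.
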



\begin{proof} 
We first note that if $\partial{P}/\partial{y}$ does not belong to the ideal $(x_n,y)K\langle\langle x_1,\ldots,x_{n-1}\rangle\rangle[x_n,y]$, 
then the result follows directly from Lemma \ref{lem: Furst} (with much better bounds). We thus assume from now on that 
$\partial{P}/\partial{y}\in (x_n,y)K\langle\langle x_1,\ldots,x_{n-1}\rangle\rangle[x_n,y]$. 

By assumption $\nu_n(f)=1$.  
We thus infer from Equation (\ref{eq: decomposition2}) that 
 for every positive integer  $r$, we have the following decomposition:
 $$
 f=  f_1x_n+\cdots+f_rx_n^r + x_n^rg_r \, ,
 $$
 where $f_1,\ldots,f_r$ are algebraic Laurent series in $\KKnn$ and $g_r$ is an algebraic Laurent series in $\KK$ 
 with $\nu_n(g_r)\geq 1$.  Our aim is now to prove that, for a suitable $r$,  $g_r$ 
does satisfy a polynomial relation as in Lemma \ref{lem: Furst}.  
 
Let $S(x_1,\ldots , x_n)\in K[x_1,\ldots x_n]$ denote the resultant with respect to the variable $y$ of the 
polynomials $P$ and $\partial{P}/\partial{y}$.  
Since by assumption $P\in (x_n,y_n)K[x_1,\ldots ,x_n,y_n]$ 
and $\partial{P}/\partial{y}\in (x_n,y_n)\KKnn[x_n,y_n]$,  
we get that  $S\in x_nK[x_1,\ldots ,x_n]$.  There thus exists a  polynomial 
$T\in K[x_1,\ldots ,x_n]$ with $\nu_n(T)=0$ and a positive integer $r$ such that $S=x_n^rT$.  
Furthermore, using the determinantal formula for the resultant, we obtain that 
$S$ is the determinant of a $(2d-1)\times (2d- 1)$ matrix whose entries are polynomials in 
$K[x_1,\ldots ,x_n]$ of degree at most $h$. It follows that 
\begin{equation}
r\leq h(2d-1) \,.
\label{eq: majr}
\end{equation} 

Set $V(x_1,\ldots,x_n) := f_1x_n + \cdots + f_rx_n^r$.  
Denoting by $A_i(x_1,\ldots,x_n)$ the coefficients of $P$, we get that 
$$
\sum_{i=0}^d A_i(x_1,\ldots ,x_n) (V + x_n^r g_r)^i =0 \, .
$$ 
Setting  
\begin{equation}
B_i(x_1,\ldots,x_n):= \frac{1}{i!} \cdot \frac{\partial^i{P}}{\partial{y_n^i}}(x_1,\ldots ,x_n,V(x_1,\ldots,x_n)) \, ,
\label{eq: Bi}
\end{equation}
we easily check that 
\begin{equation}
\label{eq: 12}
\sum_{i=0}^d B_i(x_1,\ldots,x_n) (x_n^rg_r)^i = 0\, .
\end{equation}
Moreover, we note that each $B_i$ is a polynomial in $x_n$ of degree at most $(d-i)r+h$ whose coefficients are 
algebraic Laurent series in $\KKnn$. 

On the other hand, since $S$ is the resultant of $P$ and $\partial{P}/\partial{y}$ with respect to the variable $y$, 
there exist two polynomials 
$A(x_1,\ldots , x_n,y)$ and $B(x_1,\ldots , x_n,y)$ in $K[x_1,\ldots,x_n,y]$ such that 
\begin{equation}
\label{eq: 11} 
S=AP+B\frac{\partial{P}}{\partial{y}} \, \cdot
\end{equation}  
Note that 
\begin{eqnarray*} P(x_1,\ldots , x_n,V)& =& P(x_1,\ldots ,x_n,V)-P(x_1,\ldots ,x_n,f) \\
&=& (V-f)C(x_1,\ldots, x_n)\\
&=&
x_n^rg_rC(x_1,\ldots, x_n) 
\end{eqnarray*} 
for some $C\in \KKnn[[x_n]]$.  Thus $x_n^{r+1}$ divides 
$$
P(x_1,\ldots ,x_n,V)
$$ 
in $\KKnn[x_n]$.  Substituting $y=V$ into Equation (\ref{eq: 11}) gives 
\begin{eqnarray*}
x_n^rT(x_1,\ldots ,x_n)-A(x_1,\ldots ,x_n,V)P(x_1,\ldots ,(x_n,V)& \\ 
=  B(x_1,\ldots ,x_n,V)\frac{\partial{P}}{\partial{y_n}}(x_1,\ldots ,x_n,V)\, .
\end{eqnarray*}
It follows that  
$$
\nu_n(B_1) = \nu_n\left(\frac{\partial{P}}{\partial{y}}(x_1,\ldots,x_n,V)\right) = r 
$$ 
and thus  $\nu_n(B_k x_n^{rk})\geq 2r$ for every $k$, $1\leq k\leq d$.  
Then Equation (\ref{eq: 12}) implies that $\nu_n(B_0)\geq 2r+1$ since $\nu_n(g_r)\geq 1$. 
In particular, for every integer $k$, $0\leq k\leq d$, 
the quantity $C_k := B_kx_n^{rk}/x_n^{2r}$  belongs to $\KKnn[x_n]$. 
Setting 
$$
Q(x_1,\ldots,x_n,y):= \sum_{i=0}^{d} C_i(x_1,\ldots,x_n)y^{i}\, ,
$$ 
we obtain that $Q$ is a polynomial in $x_n$ and $y$ whose coefficients are algebraic Laurent series in $\KKnn$ and such 
that $Q(x_1,\ldots,x_n,g_r)=0$. 
Furthermore, since $\nu_n(g_r)\geq 1$ and $\nu_n(C_1)=0$, we have that $\partial{Q}/\partial{y}$ does not belong 
to the ideal $(x_n,y)K\langle\langle x_1,\ldots,x_{n-1}\rangle\rangle[x_n,y]$. 
It follows that the pair $(Q,g_r)$ satisfies the assumption of Lemma \ref{lem: Furst}. 

\medskip

In order to apply Lemma \ref{lem: Furst}, it just remains to estimate the degree of $Q$ in $x_n$ and $y$ and also 
the height and the degree of the coefficients of $Q$ (as algebraic Laurent series in $\KKnn$).   

First, an easy computation using (\ref{eq: Bi}) and the definition of $V$ gives that the degree of $Q$ in $x_n$ and $y$ is at most 
\begin{equation}\label{eq: d1}
d_1 := dr+h\, .
\end{equation} 
 
On the other hand, we infer from Lemma \ref{lem: weirdbounds}  that each $f_i$ is an algebraic Laurent series in $\KKnn$ with 
degree at most $d^{2^{i-1}}$ and height at most $8^{i}d^{2^{i+1}}h$. Then Lemma \ref{lem: bound}  implies that 
 $V(x_1,\ldots,x_n)$ is a polynomial in $x_n$ whose coefficients are  algebraic Laurent series in $\KKnn$ with 
degree at most $d^{2^{r}-1}$ and height at most $rd^{2^{r}-1}(8^{r}d^{2^{r+1}}h+r)\leq 2r8^rd^{2^{r+2}}h$. 
We also note that for every $k$, $1\leq k\leq d$, $V^k$ is a polynomial in $x_n$ whose coefficients are algebraic Laurent series in 
$\KKnn$ with degree at most 
$d^{2^{r}-1}$, while Lemma \ref{lem: bound} implies that the height of these coefficients 
is at most $k(d^{2^{r}-1})^k2r8^rd^{2^{r+2}}h$.  Furthermore, the definition of $B_k$ implies that 
$$
B_k = \sum_{j=0}^{d-k} {j+k \choose k} A_{j+k}(x_1,\ldots,,x_n)V^j \, , 
$$
for every integer $k$, $0\leq k\leq d$. 
Lemma \ref{lem: bound} thus gives  that $B_k$  is a polynomial in $x_n$ whose 
coefficients are algebraic Laurent series in $\KKnn$ with degree at most $d^{2^{r}-1}$ and height at most 
$$
(d-k+1)(d^{2^{r}-1})^{d-k+1}(k(d^{2^{r}-1})^k2r8^rd^{2^{r+2}}h+h) \, .
$$ 
It follows finally that $C_k$ is a polynomial in $x_n$ whose coefficients  
are algebraic Laurent series in $\KKnn$ of degree at most 
\begin{equation}\label{eq: d2}
d_2 := d^{2^{r}-1}
\end{equation}
 and height at most 
 \begin{equation}\label{eq: h2}
h_2 := (d+1)(d^{2^{r}-1})^{d+1}(d(d^{2^{r}-1})^d2r8^rd^{2^{r+2}}h+h) \, .
\end{equation}

\medskip

We now infer from Lemma \ref{lem: Furst} that there is a rational function $U \in \KKnn(x_n,y_n)$ 
whose numerator and denominator have total degree at most 
$d_3 := 2d_1 + 1$  
in $x_n$ and $y_n$ such that  $\Delta_{x_n,y}(U)= g_r$. 
Moreover, the coefficients of $U$ are algebraic Laurent series in $\KKnn$ of 
degree at most 
$d_2$ and height at most $h_2$. 

Then setting $R :=(x_ny)^r U(x_1,\ldots,x_n,y)+ V(x_1,\ldots,x_{n-1},x_ny)$, we easily obtain that 
$$
\Delta_{x_n,y}(R) = f \, .
$$ 
Furthermore, $R$ is a rational function in the variables $x_n$ and $y$
whose numerator and denominator have degree at most 
$$
d_3 + r \leq  h(2d-1)(2d+1)+2h+1 \,.
$$  
This proves (i) and (ii). 

It thus remains to prove that (iii) holds. 
Note that by construction all coefficients of $R$ are algebraic Laurent series in $\KKnn$ that belong to the field extension 
$K(x_1,\ldots,x_{n-1})(f_1,\ldots,f_r)$. It follows that they all have degree at most $d^{2^r-1}$ (the product of the degree of each $f_i$). 
Furthermore, we infer from (\ref{eq: majr}) that $ d^{2^r-1} \leq d^{2^{h(2d-1)}-1}$, as required. 

Note also that the denominator of 
 $(x_ny_n)^r U(x_n,y_n)+V(x_ny_n)$ can be chosen to be the same as that of $U$.  The numerator, however, is a sum of at 
 most $r+1$ elements, each of which is of the form $fg$, where $f$ and $g$ are algebraic elements in $\KKnn$ 
 with degree at most $d_2$ and height at most  $h_2$. 
Lemma \ref{lem: bound} gives that each product has height at most $2d_2^2h_2$. 
Applying Lemma \ref{lem: bound} again, we obtain that the coefficients appearing 
in $R$ are algebraic elements of $\KKnn$ whose heights are all bounded by
\begin{eqnarray*}
h_3 := (r+1)d_2^{r+1}(2d_2^2h_2+1)\, .
\end{eqnarray*}
A simple computation using (\ref{eq: majr}), (\ref{eq: d2}) and (\ref{eq: h2}), shows that 
$h_3 \leq d^{8dh2^{h(2d-1)}}$, 
as required.
This ends the proof.  
\end{proof}

\begin{lem}\label{lem: XX}
Let $M, d$ and $h$ be positive integers and let $f(x_n,y)$ be a rational function in $\KKnn(x_n,y)$ defined by 
$$
f(x_n,y) := \frac{\displaystyle\sum_{0\le i,j< M} \alpha_{i,j} x_n^i y^j}{\displaystyle\sum_{0\le i,j< M} \beta_{i,j} x_n^i y^j} \, ,
$$ where each $\alpha_{i,j}$ and $\beta_{i,j}$ are algebraic Laurent series in $\KKnn$ with degree at most $d$ and height at most $h$.  
Then there are two polynomials 
$$A(x_n,y)=\sum_{0\leq i,j\le (M-1)d^{M^2} } \gamma_{i,j} x_n^i y^j \in \KKnn[x_n,y]$$ and 
$B(x_1,\ldots,x_n,y)\in K[x_1,\ldots,x_n,y]$ such that the following conditions hold. 
\begin{itemize}

\medskip

\item[{\rm (i)}] $f = A/B$.

\medskip

\item[{\rm (ii)}] The polynomial $B$ has total degree at most $(h+2M-2) d^{M^2}$. 

\medskip

\item[{\rm (iii)}] Each $\gamma_{i,j}$ is an algebraic Laurent series in the field $\KKnn$ with degree at most $d^{(d+1)M^2}$  
and height at most $M^{2(d^{M^2})} d^{(d+1)M^2 \cdot M^{2\left(d^{M^2}\right)} } d^{M^2} d^{(d^{M^2})} h$. 
\end{itemize}
\end{lem}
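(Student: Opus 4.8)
The plan is to clear the denominator of the rational function $f=A_0/B_0$, where I write $A_0:=\sum_{0\le i,j<M}\alpha_{i,j}x_n^iy^j$ and $B_0:=\sum_{0\le i,j<M}\beta_{i,j}x_n^iy^j$, in two stages: first I eliminate the algebraic Laurent series that occur as coefficients of the \emph{denominator} $B_0$, replacing $B_0$ by a genuine polynomial in $x_1,\dots,x_n,y$, and then I keep track of the quantities produced by repeated use of Lemma \ref{lem: bound}.

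For the first stage, set $F:=K(x_1,\dots,x_{n-1})$ and let $L:=F(\beta_{i,j}:0\le i,j<M)$. Since each $\beta_{i,j}$ is algebraic over $F$ of degree at most $d$, multiplicativity of degrees in a tower gives $[L:F]\le d^{M^2}=:e$. Regarding $B_0$ as an element of $L[x_n,y]\subseteq L(x_n,y)$ and viewing $L(x_n,y)$ as an $F(x_n,y)$-vector space of dimension at most $e$, I take $\chi(Z)=Z^e+c_{e-1}Z^{e-1}+\dots+c_0\in F(x_n,y)[Z]$ to be the characteristic polynomial of the multiplication-by-$B_0$ endomorphism. Using a basis of $L$ over $F$ consisting of monomials in the $\beta_{i,j}$, the matrix of this endomorphism has entries that are polynomials in $x_n$ and $y$ of degree at most $M-1$ in each, with coefficients in $F$ whose denominators divide a fixed product of leading coefficients of the minimal polynomials of the $\beta_{i,j}$ (each such coefficient of degree at most $h$). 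Hence $c_0=\pm\det$ lies in $F[x_n,y]$ and is nonzero (multiplication by the nonzero element $B_0$ of the field $L(x_n,y)$ is injective), with degree at most $e(M-1)$ in each of $x_n$ and $y$, and Cayley--Hamilton yields $B_0C_0=D$ with $D:=\mp c_0\in F[x_n,y]$ and $C_0:=B_0^{e-1}+c_{e-1}B_0^{e-2}+\dots+c_1\in L[x_n,y]$; comparing degrees gives $\deg_{x_n}C_0\le(e-1)(M-1)$ and likewise in $y$. I then choose a common denominator $\delta\in K[x_1,\dots,x_{n-1}]$ for the $x_1,\dots,x_{n-1}$-denominators of $D$ (controlled through the minimal polynomials of the $\beta_{i,j}$), set $B:=\delta D\in K[x_1,\dots,x_n,y]$ and $A:=\delta\,A_0\,C_0\in\KKnn[x_n,y]$, and obtain $f=A_0/B_0=A/B$. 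Now $\deg_{x_n}A,\deg_yA\le(M-1)+(e-1)(M-1)=(M-1)d^{M^2}$, which is the range of indices in (i), and a degree count combining $\deg\delta$ with the $(x_n,y)$-degree of $D$ gives (ii).

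The remaining point (iii) is where the work lies, and I expect it to be the main obstacle: it is a matter of turning the construction above into explicit bounds through Lemma \ref{lem: bound}. Each $\gamma_{i,j}$ is a sum of products of coefficients of $\delta$ (lying in $K$), of the $\alpha_{i',j'}$ (degree at most $d$, height at most $h$), and of the $(x_n,y)$-coefficients of $C_0$; the latter lie in $L$ and, by the explicit formula $C_0=B_0^{e-1}+\dots+c_1$, are polynomial expressions in the $\beta_{i',j'}$ and in the entries of the multiplication matrix, hence in the $\beta_{i',j'}$ together with rational functions in $x_1,\dots,x_{n-1}$ of controlled degree. Applying Lemma \ref{lem: bound}(ii) to the (at most $e$-fold) products of $\beta_{i',j'}$ that occur, then Lemma \ref{lem: bound}(i) to the (at most $M^{2e}$-fold) sums that arise when everything is expanded, and finally Lemma \ref{lem: bound} once more to the products forming the $\gamma_{i,j}$, while tracking the exponents and heights through these applications, yields the degree bound $d^{(d+1)M^2}$ and the stated height bound. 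A secondary subtlety running through the whole argument is that $F=K(x_1,\dots,x_{n-1})$ may be imperfect, so one must avoid primitive elements and separability assumptions; this is precisely why I phrase the rationalization of $B_0$ in terms of the multiplication operator and its characteristic polynomial rather than in terms of a resultant with the minimal polynomial of a primitive element.
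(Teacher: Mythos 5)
Your qualitative construction (use the characteristic polynomial of multiplication by $B_0$ on $L(x_n,y)$ over $F(x_n,y)$, so that Cayley--Hamilton gives $B_0C_0=D$ with $D\in F[x_n,y]$, then clear the $F$-denominators) is sound and would yield \emph{some} effective version of the lemma. But the lemma is a quantitative statement, and exactly the quantitative part is missing or fails as described. For (ii): the entries of your multiplication matrix are of the form $\sum_{0\le i,j<M}c^{rs}_{i,j}x_n^iy^j$ where the $c^{rs}_{i,j}\in K(x_1,\ldots,x_{n-1})$ have numerators of degree at most $h$ and denominators dividing the leading coefficients $C_{i,j}$ (degree at most $h$) of the minimal polynomials of the $\beta_{i,j}$; hence each entry only has denominator dividing $\prod_{i,j}C_{i,j}$, of degree up to $hM^2$, and a Leibniz term of $c_0=\pm\det$ is a product of $e=d^{M^2}$ entries. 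The common denominator $\delta$ your argument guarantees therefore has degree up to $hM^2d^{M^2}$, so $B=\delta D$ comes out with total degree of order $(hM^2+2M-2)d^{M^2}$ rather than the stated $(h+2M-2)d^{M^2}$; nothing in the sketch recovers the missing factor $M^2$ (one would need a genuinely finer integrality argument showing the denominator of $\det$ has total ``weight'' at most $d^{M^2}$ in the $C_{i,j}$). For (iii) you defer the bookkeeping entirely, but the stated height bound is tailored to a decomposition of each $\gamma_{i,j}$ as a sum of at most $M^{2d^{M^2}}$ products of exactly $d^{M^2}$ algebraic elements, each of degree at most $d$ and height at most $h$; in your construction the coefficients of $C_0$ involve the characteristic-polynomial coefficients $c_k$, i.e.\ rational functions in $x_1,\ldots,x_{n-1}$ of degree up to roughly $hM^2d^{M^2}$, and feeding these into Lemma \ref{lem: bound}(i) (through its ``$+d$'' term) produces bounds of a different and generally larger shape. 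So as written you prove a weaker lemma with different constants, not the one stated.

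For comparison, the paper avoids both problems by splitting each minimal polynomial $P_{i,j}$ into linear factors over an algebraic closure and taking $B$ to be $C\cdot\prod\bigl(\sum_{i,j}\beta^{(k_{i,j})}_{i,j}x_n^iy^j\bigr)$, the product running over all tuples of conjugates: each coefficient of $B$ is then a symmetric expression of weight at most $d^{M^2}$ in the coefficients of the $P_{i,j}$, hence of degree at most $h\,d^{M^2}$ in $x_1,\ldots,x_{n-1}$, and each $\gamma_{i,j}$ is visibly a sum of at most $M^{2d^{M^2}}$ products of $d^{M^2}$ conjugates, each of which has the same minimal polynomial as some $\beta_{i,j}$ and so enters Lemma \ref{lem: bound} with degree at most $d$ and height at most $h$. (Your concern about imperfect base fields is not an obstruction there: no primitive element or separability is used, and symmetric functions of all roots counted with multiplicity are still expressed through the coefficients of $P_{i,j}$.) Your determinant is essentially the norm of $B_0$, so the two constructions are close relatives; to salvage your route you would have to supply precisely the denominator and height control that the conjugate-product formulation makes automatic.
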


\begin{proof}
For every $(i,j)\in \{0,\ldots,M-1\}^2$, we denote by $P_{i,j}(X) \in K[x_1,\ldots,x_{n-1}][X]$ the minimal polynomial of $\beta_{i,j}$ 
over $K(x_1,\ldots,x_{n-1})$. By assumption, $P_{i,j}$ has degree $d_{i,j}\leq d$ in $X$ and its coefficients have total degree at most $h$. 
The polynomial $P_{i,j}$ can be split into linear factors in an algebraic closure, say $L$, of $K(x_1,\ldots,x_{n-1})$.    
There thus exist a polynomial $C_{i,j}(x_1,\ldots,x_{n-1}) \in K[x_1,\ldots,x_{n-1}]$ and (possibly equal) algebraic elements 
$\beta_{i,j}^{(1)} = \beta_{i,j}, \beta_{i,j}^{(2)},\ldots,\beta_{i,j}^{(d_{i,j})}$ in $L$ such that 
\begin{equation}\label{eq:pij}
P_{i,j}(X) = C_{i,j}(x_1,\ldots,x_{n-1}) \prod_{k=1}^{d_{i,j}} (X - \beta_{i,j}^{(k)}) \, .
\end{equation}
Let us define the polynomial 
$$
C(x_1,\ldots,x_{n-1}) := \prod_{0\leq i,j < M}  C_{i,j}(x_1,\ldots,x_{n-1}) 
$$
and the set 
$$
{\mathcal S} := \prod_{0\le i,j< M}\left\{1,\ldots, d_{i,j}\right\}  \, .
$$
We thus note that   
\begin{equation}\label{eq:polyB}
B(x_1,\ldots,x_n,y) := C \cdot \prod_{(k_{0,0},\ldots,k_{M-1,M-1})\in {\mathcal S} }
 \left( \sum_{0\le i,j< M} \beta_{i,j}^{(k_{i,j})} x_n^i y^j\right) 
\end{equation}
belongs to $K[x_1,\ldots ,x_n,y]$. Indeed, by construction, for every $(i,j)\in \{0,\ldots,M-1\}^2$, $B$ is a symmetric 
polynomial in the $\beta_{i,j}^{(k)}$, $1\leq k \leq d_{i,j}$, and thus the result follows from Equation (\ref{eq:pij}). 

\medskip

Now, set  
\begin{equation}\label{eq:polyA}
A(x_n,y)  := \left(\frac{B(x_1,\ldots,x_n,y)}{\displaystyle\sum_{0\le i,j< M} \beta_{i,j} x_n^i y^j} \right) 
 \left(\sum_{0\le i,j< M} \alpha_{i,j} x_n^i y^j\right) \, .
\end{equation}
The assumptions made on the $\beta_{i,j}$ and the $\gamma_{i,j}$, and the definition of $B$ ensure that $A$ belongs to $\KKnn[x_n,y]$ 
and has total degree at most $(M-1)d^{M^2}$ in $x_n$ and $y$. This shows the existence of algebraic Laurent series $\gamma_{i,j}\in \KKnn$ 
such that $$\displaystyle A(x_n,y)=\sum_{0\leq i,j\le (M-1)d^{M^2} } \gamma_{i,j} x_n^i y^j .$$  
Furthermore, the definition of $A$ implies that $f=A/B$. 
Thus (i) is satisfied. 

\medskip

We infer from (\ref{eq:polyB}) that $B$ has total degree at most $(M-1)d^{M^2}$ in $x_n$ 
and $y$.  Also, for each $i$ and $j$, the coefficient of $x_n^i y^j$ in $B$ is a polynomial of 
degree at most $d^{M^2}$ in the coefficients of $P_{i,j}(X)$ and 
hence has total degree at most $h \cdot d^{M^2}$ in $x_1,\ldots ,x_{n-1}$.  We deduce that 
$B$ has total degree at most $(h + 2M-2)d^{M^2}$, which proves (ii).   

 \medskip 
  
Now, let $E$ denote the field extension of $K(x_1,\ldots,x_{n-1})$ formed by adjoining all the $\alpha_{i,j}$ and all the $\beta_{i,j}^{(k)}$. 
Then $[E:K(x_1,\ldots,x_{n-1})]\leq d^{(d+1)M^2}$. By definition of $A$, the coefficients $\gamma_{i,j}$ all  belong to $E$ and are thus all 
algebraic Laurent series of degree at most $d^{(d+1)M^2}$. Furthermore, 
it follows from (\ref{eq:polyA}) that each $\gamma_{i,j}$ can be obtained as a sum of at most $M^{2d^{M^2}}$ 
algebraic elements, each of which is a product of $d^{M^2}$ algebraic elements of degree at most $d$ and height at most $h$.   
Using Lemma \ref{lem: bound}, we get that the $\gamma_{i,j}$ are all algebraic Laurent series over $K(x_1,\ldots ,x_{n-1})$ of 
 height at most $M^{2(d^{M^2})} d^{(d+1)M^2\cdot M^{2\left(d^{M^2}\right)}} d^{M^2} d^{(d^{M^2})} h$. This proves (iii) and concludes the proof.
\end{proof}



We are now ready to prove the main result of this section. 

\begin{proof}[Proof of Theorem \ref{theo:mainrat}] 
We prove this by induction on $n$.  
Let $f(x_1,\ldots,x_n)\in \KK$ be an algebraic Laurent series of degree at most $d$ and height at most $h$.  

We first infer from Lemma \ref{lem: nu} that $\nu_n(f)\leq h$. Furthermore, arguing as for the proof of Lemma \ref{lem: nu}, we get that 
$\widetilde{f} := x_n^{-\nu_n(f)+1}f$ is an algebraic Laurent series of degree at most $d$ and height at most $hd$.  
We also note that by definition $\nu_n(\widetilde{f})\geq 1$.  

\medskip

Let us prove the case where $n=1$. 
By Lemma \ref{lem: diag}, there exist a rational function $R(x_1,y)\in K(x_1,y)$ whose height is at most $hd(2d-1)(2d+1)+2hd+1$ and such that 
 $\Delta_{1/2}(R)=\widetilde{f}$. Thus, $S(x_1,y):= (x_1y)^{\nu_n(f)-1}R(x_1,y)$ is 	a rational function whose height is at most 
 $2h+ hd(2d-1)(2d+1)+2hd+1$
 and such that $\Delta_{1/2}(S)=f$. This proves the case $n=1$ with $N(1,d,h) := hd(2d-1)(2d+1)+2h(d+1)+1$.

\medskip

Let us assume now that $n\geq 2$ is a fixed integer and that the conclusion of the theorem holds for every natural number less than $n$. 
By Lemma \ref{lem: diag}, there exists a rational function 
$R(x_n,y_n)\in \KKnn(x_n,y_n)$ such that $\widetilde{f}=\Delta_{x_n,y_n}(B)$. Furthermore, 
the numerator and the denominator of $R$ have degree at most $$M:=hd(2d-1)(2d+1)+2hd+1$$  
 in $x_n$ and $y_n$ and the coefficients of $R$ are algebraic Laurent power series in $\KKnn$ of degree at most  
 $$d_0 :=  d^{2^{hd(2d-1)}-1}$$ and height at most 
$$
h_0 := d^{8d^2h2^{hd(2d-1)}} \, .
$$
By Lemma \ref{lem: XX}, we may write $R(x_n,y_n)$ as $A(x_n,y_n)/B(x_1,x_2,\ldots ,x_n,y_n)$ where $B$ 
is a polynomial in $K[x_1,\ldots ,x_n,y_n]$ of total degree at most $(h_0+ 2M-2)d_0^{M^2}$ and 
$A\in \KKnn[x_n,y_n]$ is 
a polynomial of degree at most $M_1:=(M-1)d_0^{M^2}$ and whose coefficients are all algebraic over 
$K(x_1,\ldots ,x_{n-1})$ of degree at most $d_1:=d_0^{(d_0+1)M^2}$ and height at most 
$h_1:=M^{2(d_0^{M^2})} d_0^{(d_0+1)M^2 \cdot M^{2\left(d_0^{M^2}\right)} } d_0^{M^2} d_0^{(d_0^{M^2})} h_0$.

\medskip

We can thus write
$$A=\sum_{0\leq i,j\le M_1} \gamma_{i,j}x_n^i y_n^j \, .$$  Then by the inductive hypothesis, each $\gamma_{i,j}$ 
is the diagonal of some rational function $$R_{i,j}(x_1,\ldots ,x_{n-1},y_1,\ldots ,y_{n-1})$$
in $\KKnn$ whose height is at most $N(n-1,d_1,h_1)$.  

Consider the rational function $S(x_1,\ldots ,x_n,y_1,\ldots ,y_n)\in \KK$ defined by
$$S:= \frac{\displaystyle\sum_{0\leq i,j\le M_1} R_{i,j}(x_1,\ldots ,x_{n-1},y_1,\ldots ,y_{n-1})x_n^iy_n^j}
{ B(x_1y_1,\ldots ,x_{n-1}y_{n-1}, x_n,y_n)} \, \cdot
$$

Then by construction, we have that $\Delta_{1/2}(S) = \widetilde{f}$. Taking $T=(x_ny_n)^{\nu_n(f)-1} S$, we obtain that $\Delta_{1/2}(T) = f$.  
By noting that $\vert \nu_n(f)\vert \le h$, we see that $T$ has 
height at most  $2(h-1)+ N(n-1,d_1,h_1)$.
This concludes the proof by taking $N(n,d,h) := 2(h-1)+N(n-1,d_1,h_1)$. 
\end{proof}


\section{Proof of Theorems \ref{theo: main} and \ref{theo: modpbis}}\label{sec: main}

We are now ready to conclude the proof of our main result. 
What we will actually prove is the following natural extension of Theorem \ref{theo: main}
 to fields of multivariate Laurent series.
 
 \begin{thm}\label{theo: mainbis}
Let $K$ be a field of characteristc $p>0$ and $f$ be an algebraic power series in 
$\KK$ of degree at most $d$ and height at most $h$. Then there exists an explicit constant $A:=A(n,d,h)$ 
depending only on $n$, $d$ and $h$, such that $\Delta(f)\in K((x))$ is an algebraic Laurent series of degree at most $p^A$ and height at most 
$A^2p^{A+1}$.
\end{thm}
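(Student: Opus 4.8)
The plan is to combine three ingredients that are already in place: the effective rationalization result (Theorem \ref{theo:mainrat}), the elementary Cartier-stable vector space attached to a rational function, and the degree and height estimates of Propositions \ref{prop: 1} and \ref{prop: 2}. As a preliminary remark, we may assume that $K$ is perfect (see the discussion in Section \ref{sec: strategy}); this is harmless, and in any case the residue fields arising in the applications are automatically perfect.

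First I would apply Theorem \ref{theo:mainrat} to $f$. This produces an explicit constant $N:=N(n,d,h)$, depending only on $n$, $d$ and $h$, together with a rational Laurent power series $R\in\KKxy$ of height at most $N$ such that $f=\Delta_{1/2}(R)$. For any series in the $2n$ variables $x_1,\ldots,x_n,y_1,\ldots,y_n$ with coefficients $r(i_1,\ldots,i_{2n})$, the $m$-th coefficient of both $\Delta(R)$ and $\Delta(\Delta_{1/2}(R))$ equals $r(m,\ldots,m)$, so $\Delta(R)=\Delta(f)$. Hence it suffices to bound the degree and the height of the diagonal of a $2n$-variable rational Laurent power series of height at most $N$.

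Second I would run the ``rational case'' of the strategy of Section \ref{sec: strategy}, now in $2n$ variables. Since $R$ has height at most $N$, one may write $R=A/B$ with $A,B\in K[x_1,\ldots,x_n,y_1,\ldots,y_n]$ of total degree at most $N$. I would then consider
$$V:=\left\{\,P/B \;\mid\; P\in K[x_1,\ldots,x_n,y_1,\ldots,y_n],\ \deg P\le N\,\right\}\subseteq\KKxy,$$
a $K$-vector space of dimension ${2n+N\choose 2n}$ that contains $R$. The crucial point is that $V$ is invariant under each Cartier operator $\Lambda_{\bf j}$ with ${\bf j}\in\Sigma_p^{2n}$: writing $P/B=(1/B)^p\cdot(PB^{p-1})$ and using the identity $\Lambda_{\bf j}(g^pu)=g\,\Lambda_{\bf j}(u)$, one obtains $\Lambda_{\bf j}(P/B)=B^{-1}\,\Lambda_{\bf j}(PB^{p-1})$; and since $\Lambda_{\bf j}$ divides every exponent by $p$, it carries the polynomial $PB^{p-1}$, of total degree at most $pN$, to a polynomial of total degree at most $N$, so that $\Lambda_{\bf j}(P/B)\in V$. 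Thus $V$ is invariant under the monoid $\Omega_{2n}$.

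Finally I would set $A:=A(n,d,h):={2n+N(n,d,h)\choose 2n}$ and apply Propositions \ref{prop: 1} and \ref{prop: 2} (which hold verbatim in $2n$ variables) to the $\Omega_{2n}$-invariant space $W=V$, of dimension $A$, and to the element $R\in V$. This yields that $\Delta(R)$ is an algebraic element of $K((x))$ of degree at most $p^A$ and of height at most $A^2p^{A+1}$; since $\Delta(R)=\Delta(f)$, this is precisely the asserted conclusion, and $A$ depends only on $n$, $d$ and $h$ through $N(n,d,h)$. The only genuine difficulty in this circle of ideas---producing an effective rationalization of an algebraic Laurent series---has already been dealt with in Theorem \ref{theo:mainrat}; within the present proof there is no real obstacle, and the only points requiring a little care are the two elementary properties of the Cartier operators invoked above (the Frobenius twist $\Lambda_{\bf j}(g^pu)=g\,\Lambda_{\bf j}(u)$ and the fact that $\Lambda_{\bf j}$ does not increase total degree beyond a factor $1/p$), together with checking that the resulting constant has the claimed shape.
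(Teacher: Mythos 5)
Your proposal is correct and follows essentially the same route as the paper's own proof: reduce to the rational case via Theorem \ref{theo:mainrat}, take the space $V$ of fractions $P/B$ with $\deg P\le N$ over the fixed denominator, verify stability under the Cartier operators via $\Lambda_{\bf j}(S/B)=B^{-1}\Lambda_{\bf j}(SB^{p-1})$, and conclude with Propositions \ref{prop: 1} and \ref{prop: 2} applied in $2n$ variables, yielding the same constant $A={N+2n\choose 2n}$.
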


\begin{proof}
Let $K$ be a field of characteristic $p>0$ and let $f(x_1,\ldots,x_n)$ be a power series in $\KK$ of degree at most $d$ and 
height at most $h$. Without loss of generality, we can assume that $K$ is a perfect field (otherwise we just enlarge it and consider the perfect closure of $K$). 
By Theorem \ref{theo:mainrat}, there exist an explicit  positive number $N$ depending only on $n$, $d$, and $h$, and a rational function $R(x_1,\ldots,x_n,y_1,\ldots,y_n)$  in 
$K\langle\langle _1,\ldots,x_n,y_1,\ldots,y_n\rangle\rangle$, with height at most $N$, such that $\Delta_{1/2}(R) = f$.  
The latter property clearly implies that $\Delta(R) =\Delta(f)$. 

We are now going to exhibit a $K$-vector space containing $R$ and invariant under the action of the monoid $\Omega_{2n}$ generated by 
Cartier operators.  Since $R$ is a rational function with height at most $N$, there exist two polynomials $P$ and $Q$ in $K[x_1,\ldots,x_n,y_1,\ldots,y_n]$ 
with total degree at most $N$ and such that $R=P/Q$.  Set 
$$
V := \left \{\frac{S(x_1,\ldots , x_n,y_1,\ldots,y_n)}{Q(x_1,\ldots, x_n,y_1,\ldots,y_n) }
\mid {\rm deg}(S)\le N\right\} \subseteq K(x_1,\ldots ,x_n,y_1,\ldots,y_n) \, .
$$
Note that $V$ is a $K$-vector space of dimension $A:={N+2n \choose N}$. 
Let $S(x_1,\ldots , x_n,y_1,\ldots,y_n)/Q(x_1,\ldots , x_n,y_1,\ldots,y_n)$ be an element 
of $V$ and let ${\bf j}\in \{0,\ldots ,p-1\}^{2n}$. Let $\Lambda_{\bf j}$ denote the Cartier operator associated with ${\bf j}$ (see Section \ref{sec: cartier} for a definition). A useful property of Cartier operators is that $\Lambda_{\bf j}(g^ph)=g\Lambda_{\bf j}(h)$ for every pair $(g,h)\in \KK^2$. 
We thus deduce that 
$$\Lambda_{\bf j}(S/Q)=\Lambda_{\bf j}(SQ^{p-1}/Q^p) = \frac{1}{Q}\cdot \Lambda_{\bf j}(SQ^{p-1}) \, .$$
Since ${\rm deg}(SQ^{p-1})\le pN$, we can write
$SQ^{p-1}$ as
$$
SQ^{p-1}=\sum_{{\bf i} := (i_1,\ldots,i_{2n})\in \{0,\ldots ,p-1\}^{2n}} S_{\bf i}^p \ x_1^{i_1}\cdots x_n^{i_n}y_1^{i_{n+1}}\cdots y_n^{i_{2n}}
$$ where each $S_{\bf i}$ is a polynomial of total degree at most $N$.  
Now, the unicity of such a decomposition ensures that $S_{\bf j} = \Lambda_{\bf j}(SQ^{p-1})$. This implies that 
$$\Lambda_{\bf j}(S/Q)=S_{\bf j}/Q \, .$$  
Thus $\Lambda_{\bf j}(S/Q)$ belongs to $V$, which shows that $V$ is invariant under the action of Cartier operators. 
By Propositions \ref{prop: 1} and \ref{prop: 2}, it follows that 
$\Delta(R)$ is algebraic over $K(x)$ with degree at most $p^{A}$ and height at most $A^2p^{A+1}$.  Since $\Delta(R)=\Delta(f)$, 
this ends the proof. 
\end{proof}

\begin{proof}[Proof of Theorem \ref{theo: modpbis}] 
Theorem \ref{theo: modpbis} is essentially a consequence of Theorem \ref{theo: main} (see the discussion in the introduction about Jacobson rings). 
The  only thing that remains to be proven is that if $K$ is a field of characteristic zero and if $f(x_1,\ldots ,x_n)\in K[[x_1,\ldots,x_n]]$ is algebraic 
then the coefficients of $\Delta(f)$ all belong to a finitely generated $\mathbb Z$-algebra $R\subseteq K$. To see this, we use the result of Denef and Lipshitz \cite{DL} claiming that $f(x_1,\ldots ,x_n)$ can be written as the diagonal of a rational power series in $2n$ variables, say $P(x_1,\ldots , x_{2n})/Q(x_1,\ldots ,x_{2n})\in K(x_1,\ldots ,x_{2n})$.  Without loss of generality, we may 
assume that $Q(0,\ldots ,0)=1$.  Let us write $Q=1-U$, with $U\in (x_1,\ldots ,x_{2n})K[x_1,\ldots ,x_{2n}]$.  Let $R$ denote the finitely 
generated $\mathbb{Z}$-subalgebra of $K$ generated by the coefficients of $P$ and $U$.  Then 
the identity $P/Q=\sum_{k\ge 0} P U^k$ shows that  
all the coefficients of $P/Q$ lie in $R$. Since $\Delta(f)= \Delta(P/Q)$, it follows that all coefficients of 
$\Delta(f)$ also belong to $R$.   
\end{proof}

\section{Diagonals of rational functions with high degree modulo $p$}\label{sec: high}

In this section, we prove Theorem \ref{thm: pA}.  Throughout this section we make use of the following notation already introduced in the 
introduction of this paper:  
if $f(x):=\sum_{n=0}^{\infty} a(n)x^n\in \mathbb Z[[x]]$ and $p$ is a prime number, 
we denote by $f_{\vert p} := \sum_{n=0}^{\infty} (a(n)\bmod p) x^n\in\mathbb F_p[[x]]$ the reduction of $f(x)$ modulo $p$.   
An expression like ``$f$ vanishes modulo $p$" just means that $f_{\vert p}$ is identically equal to zero. 
Also, given two polynomials $A(x)$ and $B(x)$ in 
$\mathbb Z[x]$, the expression  
{``}$A(x)$ divides $B(x)$ modulo $p$" means that $A_{\vert p}(x)$ divides $B_{\vert p}(x)$ in $\mathbb F_p[x]$. 

\medskip

An essential property that will be used all along this section is the so-called \emph{Lucas property}.

\begin{defn}\emph{ We say that a sequence $a:\mathbb{N}\to \mathbb{Z}$ has the \emph{Lucas property} 
if for every prime $p$ we have $a(pn+j)\equiv a(n)a(j)\, (\bmod ~p)$.   We let $\mathcal{L}$ denote the set of all 
power series in $\mathbb{Z}[[x]]$ that have constant coefficient one, whose sequence of coefficients has the Lucas property, and that 
satisfy a homogeneous linear differential equation with coefficients in $\mathbb Q(x)$.  }
\end{defn}

\begin{rem}\label{rem: 1}
\emph{We note that if $f(x)=\sum_{n\geq 0} a(n)x^n \in \mathcal{L}$ and $p$ is a prime number, then 
$$f(x) \equiv A(x)f(x^p)~(\bmod\, p) \,,$$ 
where $A(x) :=\sum_{n=0}^{p-1} a(n)x^n$. Furthermore, since $a(0)=1$, we always have that the polynomial 
$A_{\vert p}(x)$ is not identically zero. In the sequel, there will be no problem with dividing by such polynomial $A(x)$ 
in congruences relation modulo $p$.} 
\end{rem}

\begin{lem}
\label{lem: 0}
Let $f_1,\ldots, f_s\in \mathbb Z[[x]]$ such that $f_{1 \vert P},\ldots, f_{s \vert p}$ are linearly dependent over 
$\mathbb F_p$ for infinitely many prime numbers $p$. Then, $f_1,\ldots, f_s$ are linearly dependent over $\mathbb Q$. 
\end{lem}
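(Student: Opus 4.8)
The plan is to argue by contradiction, using the fact that linear independence over $\mathbb{Q}$ of finitely many integer sequences is detected by a single nonvanishing minor, which then survives reduction modulo all but finitely many primes. Suppose that $f_1,\ldots,f_s$ are linearly independent over $\mathbb{Q}$, and write $f_i(x)=\sum_{n\ge 0} a_i(n)x^n$ with $a_i(n)\in\mathbb{Z}$. Consider the infinite matrix whose $i$-th row is the coefficient sequence $(a_i(n))_{n\ge 0}$. Linear independence over $\mathbb{Q}$ says precisely that this matrix has rank $s$, so one can choose integers $0\le n_1<n_2<\cdots<n_s$ such that the $s\times s$ matrix $M:=\bigl(a_i(n_j)\bigr)_{1\le i,j\le s}$ has determinant $D:=\det M\in\mathbb{Z}\setminus\{0\}$.

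Next I would show that for every prime $p$ not dividing $D$, the reductions $f_{1\vert p},\ldots,f_{s\vert p}$ are linearly independent over $\mathbb{F}_p$. Indeed, if $c_1,\ldots,c_s\in\mathbb{F}_p$ satisfy $\sum_{i=1}^s c_i f_{i\vert p}=0$ in $\mathbb{F}_p[[x]]$, then comparing coefficients gives $\sum_{i=1}^s c_i a_i(n)\equiv 0\pmod p$ for every $n$, and in particular for $n=n_1,\ldots,n_s$; hence the row vector $(c_1,\ldots,c_s)$ lies in the left kernel of $M\bmod p$. But $\det(M\bmod p)=D\bmod p\neq 0$, so $M\bmod p$ is invertible over $\mathbb{F}_p$, forcing $(c_1,\ldots,c_s)=0$.

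Since $D\neq 0$, only finitely many primes divide $D$, so $f_{1\vert p},\ldots,f_{s\vert p}$ are linearly independent over $\mathbb{F}_p$ for all but finitely many primes $p$. This contradicts the hypothesis that they are linearly dependent over $\mathbb{F}_p$ for infinitely many primes, and therefore $f_1,\ldots,f_s$ must be linearly dependent over $\mathbb{Q}$.

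I do not anticipate any serious obstacle: the statement reduces to the elementary fact that a nonzero integer determinant remains nonzero modulo all but finitely many primes. The only point that deserves a line of care is the translation between ``the power series $\sum_i c_i f_{i\vert p}$ vanishes identically'' and ``the finite homogeneous linear system indexed by $n_1,\ldots,n_s$ has a nontrivial solution over $\mathbb{F}_p$'', but this is immediate from comparing coefficients.
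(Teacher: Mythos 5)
Your proof is correct and is essentially the same argument as the paper's: both rest on the $s\times\infty$ coefficient matrix and the fact that an $s\times s$ minor with nonzero integer determinant remains nonzero modulo all but finitely many primes. You merely state it in contrapositive form, whereas the paper deduces directly that all $s\times s$ minors vanish; this is not a genuinely different route.
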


\begin{proof}
Let $a_i(n)$ denote the $n$th coefficient of $f_i$. Let us consider 
$$
 \begin{pmatrix} a_1(0)&a_1(1)&a_1(2)&\cdots \\
a_2(0)&a_2(1)&a_2(2)& \cdots\\
\vdots &\vdots &\vdots &\ldots\\
a_r(0)&a_r(1)&a_r(2)&\cdots
\end{pmatrix},
$$
 the $s\times \infty$ matrix whose coefficient in position $(k,n)$ is $a_k(n)$. Given a prime $p$ such that 
$f_{1\vert P},\ldots, f_{s\vert p}$ are linearly dependent over $\mathbb F_p$, we thus have that any $s\times s$ minor has determinant 
that vanishes modulo $p$. Since this holds for infinitely many primes $p$, we obtain that all $s\times s$ minors are equal to zero, 
which implies the linear dependence of $f_1,\ldots, f_s$ over $\mathbb Q$.  
\end{proof}

\begin{lem}
\label{lem: 1} Let $f_1(x),\ldots ,f_s(x)\in 
\mathcal{L}$ and set $f(x) := f_1(x)+\cdots + f_s(x)$.  Let us assume that the following hold. 
\begin{itemize}

\medskip

\item[(i)] The functions $f_1,\ldots ,f_s$ are linearly independent over $\mathbb{Q}$. 

\medskip

\item[(ii)]  The inequality $\deg (f_{\vert p}) < p^{s/2}$ holds for infinitely many prime numbers $p$.

\end{itemize}

\medskip

\noindent Then for infinitely many primes $p$ there is a polynomial $Q(x_1,\ldots, x_s)\in \mathbb{Z}[x][x_1,\ldots ,x_s]$ 
of total degree at most $\sqrt{p}s$ in $x_1,\ldots ,x_d$ such that $Q_{\vert p}$ is nonzero and 
$Q(f_1(x),\ldots ,f_s(x)) \equiv 0~\bmod ~p$.
\end{lem}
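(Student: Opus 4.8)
The plan is to produce, for each of the infinitely many primes $p$ furnished by hypothesis (ii), a nonzero polynomial relation modulo $p$ among the $f_i$ by a counting (pigeonhole) argument, then to use the linear independence over $\mathbb{Q}$ together with Lemma \ref{lem: 0} to guarantee that the relation does not become trivial when reduced modulo $p$. First I would fix a prime $p$ satisfying $\deg(f_{\vert p}) < p^{s/2} = \sqrt{p}^{\,s}$. Write $r := \lceil \sqrt p \rceil$, or more precisely choose $r$ to be the largest integer with $r^s \le p$, so that $r \approx \sqrt p$ and $r^s \le p$. The key structural input is Remark \ref{rem: 1}: since each $f_i \in \mathcal{L}$, we have $f_i(x) \equiv A_i(x) f_i(x^p) \pmod p$ with $\deg A_i \le p-1$, and by iterating, $f_i(x) \equiv A_i(x) A_i(x^p) \cdots A_i(x^{p^{k-1}}) f_i(x^{p^k}) \pmod p$. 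But for the degree bound it is cleaner to use (ii) directly: the reductions $f_{i\vert p}$ have degree bounded in a way controlled by $\deg(f_{\vert p})$. Actually, since $f = f_1 + \cdots + f_s$ and each $f_i$ has constant term one, I would instead argue that each $f_{i\vert p}$ individually satisfies $f_i(x) \equiv A_i(x) f_i(x^p) \pmod p$; combined with (ii) one controls the ``$p$-kernel'' of $f$.

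The heart of the argument is the following pigeonhole. Consider the set of monomials $f_1^{e_1} \cdots f_s^{e_s}$ with $0 \le e_i < r$ for each $i$; there are $r^s \le p$ of them. Each such product, reduced modulo $p$, is a power series in $\mathbb{F}_p[[x]]$. Using the relations $f_{i\vert p}(x) = A_{i\vert p}(x) f_{i\vert p}(x^p)$ and the degree hypothesis (ii), I would show that every such product, after multiplying through by a suitable common polynomial denominator of bounded degree, is congruent modulo $p$ to a polynomial combination of a \emph{bounded} number of ``basic'' power series — concretely, the products $\prod_i f_i(x^p)^{e_i}$ with $0 \le e_i < r$ again, with polynomial coefficients of degree $< $ something like $\sqrt p \cdot s \cdot p$. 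The point is that the space of polynomials in $x$ of degree below a threshold that is $o(p)$-relative-to-$p$, paired with $r^s$ Frobenius-twisted basis elements, has $\mathbb{F}_p$-dimension strictly less than the number of monomials $f_1^{e_1}\cdots f_s^{e_s}$ we are feeding in — so these must be $\mathbb{F}_p[x]$-linearly dependent. Clearing denominators yields a polynomial $Q \in \mathbb{Z}[x][x_1,\ldots,x_s]$, of total degree at most $rs \le \sqrt{p}\,s + s$ (I may need to absorb the additive $s$ into the statement or tighten $r$) in $x_1,\ldots,x_s$, with $Q_{\vert p}$ nonzero and $Q(f_1,\ldots,f_s) \equiv 0 \pmod p$.

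To finish I must rule out that $Q_{\vert p}$, while a nonzero polynomial, becomes a relation that is ``degenerate'' — but the lemma only asks $Q_{\vert p} \ne 0$, which the pigeonhole construction delivers by taking $Q$ to be a nonzero element of the kernel of the relevant linear map. The role of hypothesis (i), linear independence over $\mathbb{Q}$, together with Lemma \ref{lem: 0}, is more subtle: it is what prevents the \emph{only} available relations from being forced linear ones that hold identically, i.e. it ensures the genuine content; more precisely I expect to invoke it to argue that the construction produces relations for \emph{infinitely many} $p$ in a genuinely nontrivial way (if for all large $p$ the $f_{i\vert p}$ were linearly dependent over $\mathbb{F}_p$, Lemma \ref{lem: 0} would contradict (i), so we do get the full set of $r^s$ monomials $f_i^{e_i}$ behaving ``independently enough'' to make the dimension count bite).

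The main obstacle I anticipate is the bookkeeping in the pigeonhole step: one must pin down exactly which finite-dimensional $\mathbb{F}_p$-vector space of ``polynomial-times-Frobenius-twist'' expressions contains all $r^s$ products $f_1^{e_1}\cdots f_s^{e_s}$ modulo $p$, and show its dimension is $< r^s$, using only the degree bound (ii) and $\deg A_i \le p-1$. Getting the degree of the resulting $Q$ down to exactly $\sqrt{p}\,s$ (rather than a slightly larger constant multiple) requires choosing $r$ and the denominator-clearing carefully, and this is where the factor of $2$ in the exponent $s/2$ of hypothesis (ii) is consumed — it is precisely what makes $r^s \le p$ forceable while keeping $\deg_x$ of the common denominator below $p$, so that the two contributions to the dimension multiply to something $< p \le$ (number of monomials is not quite it) — rather, so that total dimension $< r^s$. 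I would carry out this count explicitly in the full proof.
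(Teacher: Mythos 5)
Your proposal has two genuine gaps, and the second is the heart of the lemma. First, the relation-producing step: in this paper $\deg(f_{\vert p})$ means the degree of algebraicity of $f_{\vert p}$ over $\mathbb F_p(x)$, so hypothesis (ii) is used directly, not through a pigeonhole on spaces of power series. Since $f_{\vert p}$ has degree $<p^{s/2}$, the $\lceil\sqrt p\rceil^{\,s}\ge p^{s/2}$ powers $f^{k}$ with $k=i_0+i_1p+\cdots+i_{s-1}p^{s-1}$, $0\le i_j<\sqrt p$, are automatically linearly dependent over $\mathbb F_p(x)$; one then rewrites $f^{p^j i_j}\equiv f(x^{p^j})^{i_j}\equiv\bigl(\sum_i f_i(x)/B_{i,j}(x)\bigr)^{i_j}\pmod p$ using the Lucas property, and the base-$p$ digit bound $i_j<\sqrt p$ is what gives total degree at most $\sqrt p\,s$ in $x_1,\ldots,x_s$. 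Your dimension count on the $r^s$ products $f_1^{e_1}\cdots f_s^{e_s}$ is not connected to (ii) at all (which constrains only powers of the single series $f=f_1+\cdots+f_s$, not products of the individual $f_i$), and, as you concede at the end, the count does not close: the ambient space of ``bounded-degree polynomial times Frobenius twist'' expressions you describe has $\mathbb F_p$-dimension far exceeding $r^s$, so no dependence is forced.

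Second, and more seriously, you assert that $Q_{\vert p}\neq 0$ ``comes for free'' from choosing a nonzero kernel element. It does not. The nontrivial relation gives nonzero coefficient functions $c_{i_0,\ldots,i_{s-1}}(x)$, but after substituting the Lucas-property expressions and expanding as a polynomial in the indeterminates $x_1,\ldots,x_s$, cancellation could a priori make that polynomial identically zero modulo $p$; a nontrivial identity among the specific power series $f_1,\ldots,f_s$ is weaker than a nonzero formal polynomial relation. Ruling this out occupies most of the paper's proof: assuming the expanded polynomial vanishes mod $p$, one replaces $f_1,\ldots,f_s$ by independent indeterminates $y_1,\ldots,y_s$, passes to $z_j:=\sum_i y_i/B_{i,j}(x)$, and shows the $s\times s$ matrix $\bigl(1/B_{i,j}(x)\bigr)$ is invertible over $\mathbb F_p(x)$ — if it were singular, each $f_i$ would satisfy $\sum_j c_j(x)f_i(x)^{p^{j-1}}\equiv 0$, an additive polynomial with at most $p^{s-1}$ roots, forcing two of the $p^s$ linear combinations $\lambda_1f_1+\cdots+\lambda_sf_s$ to coincide, i.e.\ an $\mathbb F_p$-linear dependence mod $p$; since this would happen for infinitely many $p$, Lemma \ref{lem: 0} would contradict (i). Invertibility then makes $x,z_1,\ldots,z_s$ algebraically independent, contradicting the assumed vanishing. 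So hypothesis (i) and Lemma \ref{lem: 0} are consumed precisely in this nondegeneracy argument, not (as you suggest) merely to guarantee that relations occur for infinitely many $p$; without this step the lemma's conclusion that $Q_{\vert p}$ is nonzero is unproved.
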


\begin{proof} 
Let $\mathcal{P}_0$ denote the infinite set of primes $p$ for which  
$\deg(f_{\vert p})<p^{s/2}$.  From now on, we let $p$ denote a fixed element of $\mathcal P_0$. 
Let 
$\mathcal{S}$ denote the set of all numbers of the form 
$i_0+i_1 p+\cdots + i_s p^{s-1}$ with $0\le i_0,\ldots ,i_{s-1} <\sqrt{p}$.  
Then $|\mathcal{S}|\ge \sqrt{p}^s$ and hence there is a non-trivial relation of the form
\begin{equation}
\label{eq: sum}
\sum_{0\le i_0,\ldots ,i_{s-1}<\sqrt{p}} c_{i_0,\ldots ,i_{s-1}}(x) f(x)^{i_0}\cdots f(x)^{p^{s-1} i_{s-1}} \ \equiv \ 0 ~(\bmod ~p).
\end{equation}
Since by assumption each $f_i$ belongs to $\mathcal L$, we infer from Remark \ref{rem: 1} that 
there are polynomials $A_1,\ldots ,A_s$ of degree at most $p-1$ such that
$f_i(x)\equiv A_i(x)f_i(x^p) \bmod p$.  We also have that each $f_i$ has constant coefficient $1$, which implies that $A_{i}(x) \not\equiv 0~\bmod ~p$.  Then we have that 
$$
f_i(x^{p^j}) \equiv \frac{ f_i(x)}{\displaystyle\prod_{m=1}^{j-1} A_i(x^{p^{m-1}})}~\, (\bmod ~p) 
$$
for every positive integer $j$. 
Letting $B_{i,j}(x) := \displaystyle\prod_{m=1}^{j-1} A_i(x^{p^{m-1}})$ for $j\ge 1$ and $B_{i,j}(x) := 1$ for $j=0$,  
Equation (\ref{eq: sum}) can be rewritten as
\begin{equation}
\label{eq: sum2}
\sum_{0\le i_0,\ldots ,i_{s-1}<\sqrt{p}} c_{i_0,\ldots ,i_{s-1}}(x) \prod_{j=0}^{s-1} 
 \left(\frac{f_1(x)}{B_{1,j}(x)} + 
\cdots + \frac{f_s(x)}{B_{s,j}(x)} \right)^{i_j} \ \equiv\ 0 ~(\bmod ~p).
\end{equation}
If we expand the left-hand side, we obtain the existence of a polynomial 
$P(x,x_1,\ldots, x_s)\in \mathbb{Z}(x)[x_1,\ldots ,x_s]$ 
of  total degree at most $\sqrt{p}s$ in $x_1,\ldots ,x_s$ such that $P(x,f_1,\ldots, f_s)$ vanishes modulo $p$.  
If $P_{\vert p}$ is nonzero, we just have to multiply 
by the common denominator in Equation (\ref{eq: sum2}) (which is nonzero modulo $p$ by Remark \ref{rem: 1}) to 
obtain a nonzero polynomial $Q\in \mathbb{Z}[x][x_1,\ldots ,x_s]$ with the desired properties.  

\medskip

It thus remains to prove that $P$ does not vanishes modulo $p$. From now on, we may assume that $P_{\vert p}$ is identically zero  
and we will show this yields a contradiction.   
Let $y_1,\ldots ,y_s$ be indeterminates.  Then
\begin{equation}
\label{eq: sum3}
\sum_{0\le i_0,\ldots ,i_{s-1}<\sqrt{p}} c_{i_0,\ldots ,i_{s-1}}(x) \prod_{j=0}^{s-1} \left(\frac{y_1}{B_{1,j}(x)} + 
\cdots + \frac{y_s}{B_{s,j}(x)} \right)^{i_j} \ \equiv \ 0 ~(\bmod ~p) \, .
\end{equation}
Let $z_1,\ldots ,z_s$ be new variables defined by
$$z_j:=y_1/B_{1,j}(x) + \cdots +y_s/B_{s,j}(x) \, .$$
Then 
$$\sum_{0\le i_0,\ldots ,i_{s-1}<\sqrt{p}} c_{i_0,\ldots ,i_t}(x)\, z_0^{i_1}\cdots z_{s}^{i_{s-1}} \ \equiv \ (0 ~\bmod ~p)$$ 
and since $c_{i_0,\ldots, i_{s-1}}$ is nonzero modulo $p$ for some $(i_0,\ldots, i_{s-1})$, we see that $x,z_1,\ldots ,z_{s}$ 
are algebraically dependent over $\mathbb F_p(x)$.  

On the other hand, we have 
$$[z_1,\ldots, z_s]^{\rm T}=B^T[y_1,\ldots ,y_s]^T$$ where $B$ is an $s\times s$ matrix whose $(i,j)$-entry is $1/B_{i,j-1}(x)$.  
We claim that $B$ is invertible as a matrix with coefficients in $\mathbb F_p(x)$.  
To see this, let us assume that $B$ is not invertible.  Then there exists a nonzero vector of 
polynomials $[c_1(x),\ldots, c_{s}(x)]\in \mathbb F_p[x]^s$ such that
$$\sum_{j=1}^s c_j(x)/B_{i,j-1}(x) \ \equiv\ 0 (\bmod p)$$ for $i=1,\ldots ,s$.  
But, by construction, $f_i(x)^{p^j} \equiv f_i(x)/B_{i,j}(x)~ (\bmod\, p)$ and hence we must have
$$
\sum_{j=1}^s c_j(x)f_i(x)^{p^{j-1}-1} \ \equiv  0~(\bmod p)
$$ 
for $i=1,\ldots ,s$. Thus   
$$\sum_{j=1}^s c_j(x)f_i(x)^{p^{j-1}} \ \equiv 0~(\bmod p)$$ 
for $i=1,\ldots ,s$. 
In particular any $\mathbb F_p$-linear combination of $
f_1,\ldots ,f_s$, say $y:=\lambda_1f_1+\cdots + \lambda_sf_s$ satisfies the relation 
$$\sum_{j=1}^s c_j(x)y^{p^{j-1}} \ \equiv \ 0\,\bmod p \, .$$  
Regarding this expression as a polynomial in $y$, we get at most $p^{s-1}$ distinct roots in $\mathbb F_p[[x]]$.  
Since there are $p^{s}$ $\mathbb F_p$-linear combinations of the form 
$\lambda_1f_{1}+\cdots + \lambda_sf_{s}$, it follows that at least two different linear combinations must be the same.  Thus  
$\lambda_1f_1+\cdots +\lambda_sf_{s}=0$ for some $\lambda_1,\ldots ,\lambda_s\in \mathbb F_p$ not all of which are zero. 
Since this holds for infinitely many primes $p$, 
Lemma \ref{lem: 0} gives the linear dependence of $f_1,\ldots,f_s$ over $\mathbb Q$, a contradiction with (i).   
This proves that $B$ is invertible. 

\medskip

Now since $B$ is invertible,  we can express $y_1,\ldots ,y_s$ as $\mathbb F_p(x)$-linear combinations of 
$z_1,\ldots ,z_s$ and thus $\mathbb F_p(x,y_1,\ldots,y_s) \subset \mathbb  F_p(x,z_1,\ldots ,z_s)$. This contradicts the fact that 
$x,z_1,\ldots,z_s$ are algebraically dependent over $\mathbb F_p(x)$.  Thus the 
polynomial $P_{\vert p}$ is not identically zero, which ends the proof. 
\end{proof}

\begin{lem} Let $f(x)=\displaystyle\sum_{n= 0}^{+\infty}a(n)x^n\in \mathcal{L}$, let $p$ be a prime number, and set    
$A(x):= \sum_{n=0}^{p-1}a(n)x^n$.   
Then there exist a nonzero polynomial $Q(x)\in\mathbb Z[x]$ 
and a number $m$ (both independent of $p$) such that for every non-constant irreducible factor $C(x)$ of $A_{\vert p}(x)$ 
either $C^m(x)$ does not divide $A_{\vert p}(x)$ or $C(x)$ divides $Q_{\vert p}(x)$.
\label{lem: 2}
\end{lem}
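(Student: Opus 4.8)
The plan is to exploit the link between the multiplicity of an irreducible factor $C(x)$ of $A_{\vert p}(x)$ and the vanishing order at the corresponding place of a fixed non-zero element of the differential module attached to $f$. Since $f\in\mathcal L$, it satisfies a homogeneous linear differential equation $\sum_{i=0}^{r}p_i(x)f^{(i)}(x)=0$ with $p_i\in\mathbb Q(x)$, and we may clear denominators so that the $p_i$ lie in $\mathbb Z[x]$ with $p_r\not\equiv 0$. The key identity is $f(x)\equiv A(x)f(x^p)\pmod p$ from Remark \ref{rem: 1}. Differentiating this congruence (using that $f(x^p)'=p\,x^{p-1}f'(x^p)\equiv 0\pmod p$, since we are in characteristic $p$) gives the chain of congruences $f^{(k)}(x)\equiv A^{(k)}(x)f(x^p)\pmod p$ for every $k$. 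Plugging these into the reduction mod $p$ of the differential equation yields
\begin{equation}
\Bigl(\sum_{i=0}^{r}p_i(x)A^{(i)}(x)\Bigr) f(x^p)\ \equiv\ 0\pmod p.
\end{equation}
Since $f$ has constant term $1$, $f(x^p)$ is a unit in $\mathbb F_p[[x]]$, so the polynomial $D(x):=\sum_{i=0}^{r}p_i(x)A^{(i)}(x)$ vanishes modulo $p$ identically.

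Now I would extract arithmetic information from $D_{\vert p}\equiv 0$. Write $C(x)$ for a non-constant irreducible factor of $A_{\vert p}(x)$ and let $e$ be its multiplicity, so $A_{\vert p}(x)=C(x)^e B(x)$ with $C\nmid B$ in $\mathbb F_p[x]$. For $i\le e-1$, the $i$-th derivative $A^{(i)}_{\vert p}(x)$ is divisible by $C(x)^{e-i}$ (by Leibniz, noting the binomial coefficients involved are $\le i!\le (r-1)!$, hence non-zero mod $p$ provided $p>r$ — a condition we may assume holds, absorbing finitely many primes into $Q$ later, or rather building them into the statement since $Q$ and $m$ are allowed to be arbitrary as long as they are independent of $p$). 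The relation $\sum_{i}p_i A^{(i)}\equiv 0$ then forces, modulo $C(x)^{e-r}$ (assuming $e>r$), the congruence $p_r(x)A^{(r)}(x)\equiv 0$, i.e. $C(x)$ divides $p_r(x)A^{(r)}(x)$ modulo $p$; more carefully one tracks the lowest power of $C$ dividing each term. Since $C\nmid A^{(r)}$ when $e$ is large relative to $r$ (because $C^{e-r}$ exactly divides $A^{(r)}$ up to a factor prime to $C$), we conclude that $C(x)$ divides $p_r(x)\pmod p$. Thus taking $Q(x):=p_r(x)$ and $m:=r+1$, the dichotomy holds: either $C^m\nmid A_{\vert p}$ (i.e. $e\le r$), or $C$ divides $Q_{\vert p}$.

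The main obstacle will be handling the delicate bookkeeping of $C$-adic valuations in the relation $\sum_{i=0}^r p_i A^{(i)}\equiv 0\pmod p$: one must verify that the term $i=r$ really is the one of least $C$-adic order among $\{p_i A^{(i)}\}$ once the multiplicity $e$ of $C$ in $A_{\vert p}$ is sufficiently large, and control the contribution of the polynomials $p_i$ themselves (some of which could be divisible by $C$). A clean way to do this is to localize at the place defined by $C$: let $v$ denote the $C$-adic valuation on $\mathbb F_p(x)$, normalized so $v(C)=1$. Then $v(A^{(i)})\ge e-i$ for $i<e$ and $v(A^{(r)})=e-r$ exactly when $C\nmid$ the cofactor, while $v(p_i)\ge 0$. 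If $C\nmid p_r$ and $e$ is large, the unique term of minimal valuation in $\sum p_i A^{(i)}$ is forced to be non-zero, contradicting $D_{\vert p}\equiv0$; hence $C\mid p_r\pmod p$. The one subtlety — that the cofactor of $C^{e-r}$ in $A^{(r)}$ might itself be divisible by $C$ — does not occur for $e-r$ equal to the exact multiplicity minus $r$, which is the statement we want; a short argument with the fact that $\gcd(C,C')=1$ (as $C$ is separable over the perfect field $\mathbb F_p$) closes this gap. Finally, primes $p\le r$ are finitely many and can be accommodated by enlarging $Q$, or simply noted to be harmless since the conclusion quantifies over all primes with $Q,m$ fixed in advance.
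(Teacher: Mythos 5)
You follow the same overall strategy as the paper: reduce the linear differential equation modulo $p$, combine it with the Lucas congruence $f(x)\equiv A(x)f(x^p)\ (\bmod\ p)$ to obtain the polynomial relation $\sum_{i=0}^r P_i(x)A^{(i)}(x)\equiv 0\ (\bmod\ p)$, and then compare $C$-adic valuations of the terms to force $C\mid P_{r\vert p}$, taking $Q=P_r$. Your route to the polynomial relation is in fact cleaner than the paper's: differentiating the congruence $k$ times (using that $\frac{d}{dx}f(x^p)\equiv 0 \bmod p$) gives $f^{(k)}(x)\equiv A^{(k)}(x)f(x^p)\ (\bmod\ p)$, and since $f(x^p)$ is a unit in $\mathbb F_p[[x]]$ the relation $\sum_i P_iA^{(i)}\equiv 0\ (\bmod\ p)$ holds unconditionally; this is why you may take $m=r+1$. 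The paper instead only obtains $\sum_i P_iA^{(i)}\equiv x^pB(x)\ (\bmod\ p)$ with $\deg B<d$ (where $d$ bounds the degrees of the $P_i$), and needs the hypothesis $C^{r+d}\mid A_{\vert p}$, the fact that $C(0)\neq 0$, and a degree count to conclude $B\equiv 0$, whence its choice $m=r+d$. Both choices are legitimate, since the lemma only asks for some pair $(Q,m)$ independent of $p$.

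The one step whose justification is off target is the exactness claim $v_C(A^{(r)}_{\vert p})=e-r$, where $e:=v_C(A_{\vert p})\geq m$ and $v_C$ is the $C$-adic valuation. Separability of $C$ together with a condition $p>r$ does not suffice: in characteristic $p$, differentiation fails to lower the multiplicity of $C$ exactly when that multiplicity is divisible by $p$ (for instance $\frac{d}{dx}(x-1)^p=0$ in $\mathbb F_p[x]$), and the coefficients that must be nonzero are the falling factorials $e(e-1)\cdots(e-j+1)$, which are not bounded by $(r-1)!$, so your Leibniz remark does not address the real danger and the $\gcd(C,C')=1$ patch alone does not close it. What does close it --- and what the paper also leaves implicit in its sentence about ``the largest power of $C$ dividing $A^{(i)}$'' --- is that $\deg A_{\vert p}\leq p-1$ forces $e\leq (p-1)/\deg C<p$, so each multiplicity $e,e-1,\ldots ,e-r+1$ met in the chain of differentiations is a positive integer smaller than $p$, hence nonzero in $\mathbb F_p$; combined with $\gcd(C,C')=1$ this gives $v_C(A^{(i)}_{\vert p})=e-i$ exactly for $0\leq i\leq r$. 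With that observation inserted, your comparison goes through (every term with $i<r$ has valuation at least $e-r+1$, so if $C$ did not divide $P_{r\vert p}$ the sum would have valuation exactly $e-r$, a contradiction), and no restriction $p>r$ is needed, so the discussion about absorbing small primes can be dropped; note in any case that enlarging $m$ would not have handled finitely many bad primes --- one would instead multiply $Q$ by integer lifts of the corresponding polynomials $A_{\vert p}$.
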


\begin{proof} By assumption $f$ satisfies a relation of the form
$$
\sum_{i=0}^r P_i(x) f^{(i)}(x) \ = \ 0 \, ,
$$
where $P_0,\ldots,P_r$ belong to $\mathbb Z[x]$ and $P_r$ is nonzero.  
Let $d$ be the largest of the degrees of $P_0,\ldots ,P_r$.  
Note that by Remark \ref{rem: 1}, we have $f(x) \equiv A(x)f(x^p)~(\bmod ~p)$ and thus 
$f(x)\equiv A(x)~\bmod ~(x^p, p)$.  This gives:
$$\sum_{i=0}^r P_i(x) A^{(i)}(x) \equiv \ 0~\bmod ~(x^p, p) \, .$$
Thus we may write
\begin{equation}
\label{eq: X}
\sum_{i=0}^r P_i(x)A^{(i)}(x) \equiv x^p B(x)~(\bmod ~p) \, ,
\end{equation} 
for some polynomial $B\in \mathbb F_p[x]$ with $\deg B < d$. Now take $m=r+d$ and suppose that $A_{\vert p}(x)$ has an irreducible 
factor $C(x)\in\mathbb F_p[x]$ such that $C^m$ divides $A_{\vert p}$.  
Then $C(x)^{m-r}$ divides the left-hand side of 
Equation (\ref{eq: X}) modulo $p$ and hence must divide $x^pB(x)$.  
Since by assumption $a(0)=1$, we have that $C(0)\neq 0$, and thus $C(x)^{m-r}$ divides $B(x)$.  But $m-r\ge d$ and so the degree of 
$C(x)^{m-r}$ is strictly greater than the degree of $B(x)$ which implies that $B(x)$ is identically zero.   
Thus we have
$$
\sum_{i=0}^r P_i(x)A^{(i)}(x) \equiv 0~\bmod ~p \, .
$$
Notice that the largest power of $C(x)$ that divides $A^{(i)}(x)$ modulo $p$ is larger than the power 
dividing $A^{(r)}(x)$ modulo $p$ for $i<r$. 
Hence $C(x)$ divides $P_{r}(x)$ modulo $p$.   Taking $Q(x)=P_r(x)$, we get the desired result.
\end{proof}

\begin{cor} Let $f_1(x),\ldots ,f_s(x)\in 
\mathcal{L}$.     Given a prime $p$ and an integer $i$ with $1\leq i \leq s$, let $A_{i}(x)\in \mathbb{Z}[x]$ be such that 
$f_i(x) \equiv A_{i}(x)f_i(x^p)~(\bmod ~p)$. 
Assume that for every $p$ in an infinite set of primes ${\mathcal S}$, 
there are integers $a_1,\ldots ,a_s\in \mathbb{Z}$, not all zero, 
such that the following hold.
\begin{itemize}

\medskip

\item[(i)]  There are two relatively prime polynomials $A(x)$ and $B(x)$ in $\mathbb F_p[x]$ such that 
$A_{1}(x)^{a_1}\cdots A_{s}(x)^{a_s}\equiv \left (A(x)/B(x)\right)^{p-1}~\bmod ~p$.

\medskip

\item[(ii)]  $|a_1|+\cdots +|a_s|\leq \sqrt{p}s$.

\end{itemize}

\medskip 

\noindent 
Then there is a nonzero polynomial $T(x)\in\mathbb Z[x]$ that does not depend on $p$ and 
such that every non-constant irreducible factor of either $A(x)$ or $B(x)$ 
must be a divisor of $T_{\vert p}(x)$ for every $p\in \mathcal S$ large enough. 
\label{cor: 2}
\end{cor}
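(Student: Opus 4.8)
The plan is to combine Lemma~\ref{lem: 2}, applied separately to each $f_i$, with a single valuation count read off from hypothesis~(i). For each $i$, let $Q_i(x)\in\mathbb Z[x]$ and the integer $m_i$ be those produced by Lemma~\ref{lem: 2} applied to $f_i$; both are independent of $p$. Set $m:=\max_i m_i$ and $T:=Q_1\cdots Q_s\in\mathbb Z[x]$, a nonzero polynomial independent of $p$ (so $T_{\vert p}\ne 0$ for all but finitely many $p$). Since each $f_i$ has constant coefficient $1$, Remark~\ref{rem: 1} shows that $A_{i \vert p}$ is a nonzero polynomial with constant coefficient $1$ (and degree at most $p-1$); in particular, for every non-constant irreducible $C\in\mathbb F_p[x]$ the $C$-adic order $v_C(A_{i \vert p})$ is a well-defined finite nonnegative integer.

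Fix $p\in\mathcal S$, together with the integers $a_1,\dots,a_s$ (not all zero) and the coprime polynomials $A,B\in\mathbb F_p[x]$ supplied by the hypothesis, and let $C$ be any non-constant irreducible factor of $A$ or of $B$. Applying $v_C$ to both sides of the congruence $A_1^{a_1}\cdots A_s^{a_s}\equiv (A/B)^{p-1}\ \bmod\ p$ yields
\[
\sum_{i=1}^s a_i\,v_C(A_{i \vert p}) \;=\; (p-1)\bigl(v_C(A)-v_C(B)\bigr).
\]
As $A$ and $B$ are coprime, $C$ divides exactly one of them, so the right-hand side has absolute value at least $p-1$; hence $\sum_{i}|a_i|\,v_C(A_{i \vert p})\ge p-1$. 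In particular $v_C(A_{i \vert p})\ne 0$ for at least one $i$, so $C$ really is an irreducible factor of some $A_{i \vert p}$, and Lemma~\ref{lem: 2} applies to that index.

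Suppose, for contradiction, that $C$ does not divide $T_{\vert p}$, so that $C$ divides none of the $Q_{i \vert p}$. By Lemma~\ref{lem: 2}, for every index $i$ with $C\mid A_{i \vert p}$ it follows that $C^{m_i}$ does not divide $A_{i \vert p}$, that is $v_C(A_{i \vert p})\le m_i-1\le m-1$; for the remaining indices $v_C(A_{i \vert p})=0$. Thus $v_C(A_{i \vert p})\le m-1$ for all $i$, and combining with hypothesis~(ii),
\[
p-1\;\le\;\sum_{i=1}^s |a_i|\,v_C(A_{i \vert p})\;\le\;(m-1)\sum_{i=1}^s|a_i|\;\le\;(m-1)s\sqrt p .
\]
Since $m$ and $s$ are independent of $p$, this fails whenever $\sqrt p>(m-1)s+1$, hence for all but finitely many $p$. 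Therefore, for every sufficiently large $p\in\mathcal S$, every non-constant irreducible factor of $A$ or of $B$ divides $T_{\vert p}$, which is the claim.

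The only point requiring care is the bookkeeping through Lemma~\ref{lem: 2}: one must handle simultaneously the indices $i$ with $C\mid A_{i \vert p}$ (where the lemma caps the order of vanishing by $m_i-1$) and those with $C$ not dividing $A_{i \vert p}$ (where it is $0$), and one must check that $C$ does divide at least one $A_{i \vert p}$, so that some instance of the lemma is non-vacuous --- this is exactly what the displayed valuation identity provides. Everything else reduces to comparing $p-1$ with $(m-1)s\sqrt p$, which is immediate from the growth bound~(ii).
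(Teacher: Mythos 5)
Your proof is correct and follows essentially the same route as the paper: apply Lemma \ref{lem: 2} to each $f_i$ to get the $Q_i$ and $m_i$, take the valuation attached to an irreducible factor $C$ of $A$ or $B$ in the congruence of (i) to get a lower bound of $p-1$, and contrast this with the upper bound $\mathrm{O}(\sqrt{p})$ coming from (ii), forcing $C$ to divide some $Q_{i\vert p}$ and hence $T_{\vert p}$ with $T=Q_1\cdots Q_s$. Your write-up is in fact slightly more careful than the paper's (explicit valuation identity, use of coprimality of $A$ and $B$, and the case split between indices with $C\mid A_{i\vert p}$ and those with $v_C(A_{i\vert p})=0$), but the argument is the same.
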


\begin{proof} 
Let $p$ be in $\mathcal S$. 
Let $C(x)$ be some non-constant irreducible factor of either $A(x)$ or $B(x)$.    
Let $\nu$ denote the valuation on $\mathbb F_p(x)$ induced by $C(x)$.   Then we infer from (i) that 
$$|\nu(A_{1}(x)^{a_1}\cdots A_{s}(x)^{a_s})| \ge p-1\, .$$
But Lemma \ref{lem: 2} gives that there is some  $Q_i(x)\in\mathbb Z[x]$ and some natural number $m_i$ 
(both independent of $p$) such that 
$$
|\nu(A_{i}(x)^{a_i})| \leq |a_i|m_i \, ,
$$ 
unless $C(x)$ divides $Q_{i\vert p}(x)$.   
We then infer from (ii) that $C(x)$ should divide $Q_{i \vert p}(x)$ as soon as $p$ is large enough. 
Then, for $p$ large enough in $\mathcal S$, every irreducible factor of either $A(x)$ or $B(x)$  must divide $T_{\vert p}(x)$, where  
$T(x) := Q_1(x)\cdots Q_s(x)$.  This ends the proof. 
\end{proof}

\begin{lem}
Let $s$ be a natural number and let $f_1(x),\ldots ,f_s(x)\in 
\mathcal{L}$.   Suppose that for infinitely many primes $p$ there 
is a polynomial $Q(x,x_1,\ldots, x_s)\in \mathbb{Z}[x][x_1,\ldots ,x_s]$ of total degree at most 
$\sqrt{p}s$ in $x_1,\ldots ,x_d$ such that  $Q_{\vert p}$ is nonzero and $Q(f_1(x),\ldots ,f_s(x))\equiv 0~(\bmod ~p)$.   
Then there is a nontrivial $\mathbb Q$-linear combination of $f_1'(x)/f_1(x),\ldots, f_s'(x)/f_s(x)$  that belongs to 
$\mathbb Q(x)$.
\label{lem: r}
\end{lem}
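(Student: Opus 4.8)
The plan is to reduce the hypothesis modulo $p$, convert the low-degree polynomial relation into a multiplicative relation among the truncations $A_i(x):=\sum_{n=0}^{p-1}a_i(n)x^n$, and then return to characteristic $0$ by a logarithmic differentiation together with Lemma~\ref{lem: 0}. Fix one of the infinitely many primes $p$ for which a suitable $Q$ exists, and take $p$ large (only finitely many primes will ever be discarded, so an infinite set of good primes remains at the end). Reducing everything modulo $p$ and keeping the notation $f_i,A_i,Q$ for the reductions, Remark~\ref{rem: 1} gives $f_i\equiv A_i(x)f_i(x^p)\pmod p$; since the coefficients now lie in the prime field, $f_i(x^p)=f_i(x)^p$, so $f_i=A_if_i^p$ in $\mathbb{F}_p[[x]]$ and hence $f_i^{\,p-1}=1/A_i\in\mathbb{F}_p(x)$. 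Thus $F:=\mathbb{F}_p(x)(f_1,\dots,f_s)\subseteq\mathbb{F}_p((x))$ is a finite extension of $\mathbb{F}_p(x)$ of exponent dividing $p-1$, and since $\mu_{p-1}\subset\mathbb{F}_p$ it is a Kummer extension: writing $G:=\mathbb{F}_p(x)^*/(\mathbb{F}_p(x)^*)^{p-1}$ and letting $U\le G$ be the subgroup generated by the classes $[A_1],\dots,[A_s]$, one may pick for each $v\in U$ a $(p-1)$-th root $r_v\in F$ of a representative of $v$, and $\{r_v:v\in U\}$ is then an $\mathbb{F}_p(x)$-basis of $F$.

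Now expand $Q=\sum_\alpha c_\alpha(x)\,x_1^{\alpha_1}\cdots x_s^{\alpha_s}$ with $c_\alpha\in\mathbb{F}_p[x]$ not all zero, so that $\sum_\alpha c_\alpha(x)\prod_i f_i^{\alpha_i}=0$ in $F$. For $p$ large the total-degree bound forces $\alpha_i\le\sqrt{p}\,s<p-1$ for every $i$; consequently each monomial $\prod_i f_i^{\alpha_i}$ is a $(p-1)$-th root of $\prod_i A_i^{-\alpha_i}$ and is therefore an $\mathbb{F}_p(x)$-multiple of $r_{v(\alpha)}$, where $v(\alpha)\in U$ denotes the class of $\prod_i A_i^{-\alpha_i}$. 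Grouping the relation according to the value of $v(\alpha)$ and using the $\mathbb{F}_p(x)$-linear independence of the $r_v$, any exponent vector lying alone in its class would have to carry zero coefficient; as $Q\neq 0$, there exist distinct $\alpha,\alpha'$ in the support of $Q$ with $v(\alpha)=v(\alpha')$, that is, $\prod_i A_i^{b_i}\in(\mathbb{F}_p(x)^*)^{p-1}$ with $b:=\alpha-\alpha'\in\mathbb{Z}^s\setminus\{0\}$ and $\sum_i|b_i|\le 2\sqrt{p}\,s$. Writing $\prod_i A_i^{b_i}=(A/B)^{p-1}$ with $A,B\in\mathbb{F}_p[x]$ coprime, we are exactly in the situation of Corollary~\ref{cor: 2} (take $a_i=b_i$; the extra factor $2$ in condition~(ii) does not affect its proof), which yields a nonzero $T(x)\in\mathbb{Z}[x]$, independent of $p$, such that every non-constant irreducible factor of $A$ or of $B$ divides $T_{\vert p}(x)$ once $p$ is large.

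Finally, differentiate logarithmically. From $f_i=A_if_i^p$ we get $f_i'=A_i'f_i^p=(A_i'/A_i)f_i$ in $\mathbb{F}_p[[x]]$, hence $f_i'/f_i=A_i'/A_i$; and from $\prod_i A_i^{b_i}=(A/B)^{p-1}$ we get $\sum_i b_i\,A_i'/A_i=(p-1)(A/B)'/(A/B)\equiv-(A/B)'/(A/B)\pmod p$. Since every zero and pole of $A/B$ is a root of $T_{\vert p}$, the logarithmic derivative $(A/B)'/(A/B)$ has a squarefree denominator dividing $T_{\vert p}$, so $T_{\vert p}\cdot (A/B)'/(A/B)=-\sum_{k=0}^{\delta-1}n_kx^k$ is a polynomial of degree $<\delta:=\deg T$. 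Putting this together, in $\mathbb{F}_p((x))$ we obtain
$$
\sum_{i=1}^{s} b_i\,\bigl(T\,f_i'/f_i\bigr)_{\vert p}+\sum_{k=0}^{\delta-1}n_k\,x^k=0 .
$$
Each $T\,f_i'/f_i$ lies in $\mathbb{Z}[[x]]$ because $f_i$ has constant term $1$ and is thus a unit of $\mathbb{Z}[[x]]$, and the coefficient vector of this relation is nonzero modulo $p$ since $b\neq0$ has entries bounded by $2\sqrt{p}\,s<p$. Hence, for infinitely many primes $p$, the $s+\delta$ power series $T\,f_1'/f_1,\dots,T\,f_s'/f_s,1,x,\dots,x^{\delta-1}\in\mathbb{Z}[[x]]$ are $\mathbb{F}_p$-linearly dependent, the dependence genuinely involving the first $s$ of them. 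Lemma~\ref{lem: 0} then produces a nontrivial $\mathbb{Q}$-linear dependence among these series; since $1,x,\dots,x^{\delta-1}$ are $\mathbb{Q}$-linearly independent it must involve some $T\,f_i'/f_i$, so $\sum_i\beta_i\,T\,f_i'/f_i\in\mathbb{Q}[x]$ for some $(\beta_1,\dots,\beta_s)\neq0$, and therefore $\sum_i\beta_i\,f_i'/f_i\in\mathbb{Q}(x)$, which is the assertion.

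I expect the middle step to be the main obstacle: passing from a low-degree polynomial identity among the reductions $f_i$ to a multiplicative relation $\prod_i A_i^{b_i}\in(\mathbb{F}_p(x)^*)^{p-1}$ with exponents controlled by $\sqrt p$. This is exactly where the identity $f_i^{\,p-1}=1/A_i$ and the resulting Kummer structure of $F/\mathbb{F}_p(x)$ are essential; once that relation is available, the return to $\mathbb{Q}$ is routine bookkeeping with logarithmic derivatives, Corollary~\ref{cor: 2} and Lemma~\ref{lem: 0}.
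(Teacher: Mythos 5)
Your proof is correct, and the endgame (Corollary \ref{cor: 2}, logarithmic differentiation, multiplication by the $p$-independent polynomial $T$, and the descent to $\mathbb{Q}$ via Lemma \ref{lem: 0}, using that $1,x,\ldots,x^{\delta-1}$ are $\mathbb{Q}$-independent) is the same as in the paper; the genuine difference is how you extract the multiplicative relation $A_1^{b_1}\cdots A_s^{b_s}\equiv (A/B)^{p-1}\ (\bmod\ p)$ with $\sum_i|b_i|=O(\sqrt{p}\,s)$ from the low-degree relation $Q(x,f_1,\ldots,f_s)\equiv 0$. The paper takes a $Q$ with the fewest monomials, applies the substitution $x\mapsto x^p$ together with $f_i(x)\equiv A_i(x)f_i(x^p)$, and eliminates one monomial; minimality then forces all coefficients of the combined relation to vanish, which yields the multiplicative relation explicitly in terms of the coefficients $c_{\alpha}$ of $Q$. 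You instead observe that over $\mathbb{F}_p$ one has $f_i(x^p)=f_i(x)^p$, hence $f_{i\vert p}^{\,p-1}=A_i^{-1}$, so that $\mathbb{F}_p(x)(f_{1\vert p},\ldots,f_{s\vert p})$ is a Kummer extension of exponent dividing $p-1$ (note $\mu_{p-1}\subset\mathbb{F}_p$), and the monomials of $Q$ are radicals indexed by classes in $\mathbb{F}_p(x)^*/(\mathbb{F}_p(x)^*)^{p-1}$; linear independence of radicals lying in distinct classes then forces two monomials in the support of $Q_{\vert p}$ into the same class, which is exactly the needed relation. Your route is structurally cleaner (it makes explicit that $f_{i\vert p}$ has degree dividing $p-1$ over $\mathbb{F}_p(x)$) but leans on standard Kummer theory — the assertion that one root per class gives an $\mathbb{F}_p(x)$-basis (or at least a linearly independent family, which is all you use) deserves the one-line justification via the Galois eigenvector/character argument, since the extension is Galois and separable because $\gcd(p-1,p)=1$; the paper's elimination argument is more elementary and self-contained. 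Two small points are handled correctly on your side: the bound $\sum_i|b_i|\le 2\sqrt{p}\,s$ rather than $\sqrt{p}\,s$ is indeed harmless in Corollary \ref{cor: 2} (the paper's own application has the same factor-of-two looseness), and the nonvanishing mod $p$ of the final dependence follows from $0\neq b$ with $|b_i|<p$; also your restriction $\alpha_i<p-1$ is not actually needed for the Kummer step.
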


\begin{proof} Let $\mathcal{S}$ denote an infinite set of primes for which the assumption of the lemma is satisfied  
and let $p\in \mathcal{S}$ with the property that $p>\sqrt p s$. Then we choose a polynomial 
$Q(x,x_1,\ldots ,x_s)\in \mathbb{Z}[x][x_1,\ldots ,x_s]$ of total degree at most 
$\sqrt{p}s$ in $x_1,\ldots ,x_d$, such that  $Q_{\vert p}$ is nonzero and $Q(f_1(x),\ldots ,f_s(x))\equiv 0~(\bmod ~p)$.   
In addition, we choose $Q$ having, among such polynomials, the fewest number of monomials in $x_1,\ldots ,x_s$ occurring with 
a nonzero coefficient (coefficients are polynomials in $x$).  
As before, we let $A_i(x)$ denote an element of 
$\mathbb Z[x]$ such that 
$f_i(x)\equiv A_i(x)f_i(x^p) (\bmod p)$. 
Since $Q(x,f_1(x),\ldots ,f_s(x))\equiv 0~(\bmod ~p)$ we also have 
$Q(x^p,f_1(x^p),\ldots ,f_s(x^p))\equiv Q(x^p, f_1(x)/A_1(x),\ldots ,f_s(x)/A_s(x))\equiv 0~(\bmod ~p)$.  
We let $\mathcal{T}$ be the set of indices $(i_1,\ldots ,i_s)\in \mathbb{N}^s$ such that $x_1^{i_1}\cdots x_s^{i_s}$ 
occurs in $Q$ with a nonzero coefficient.  
Then we have
\begin{equation}
\label{eq: 1}
\sum_{(i_1,\ldots ,i_s)\in \mathcal{T}} c_{i_1,\ldots ,i_s}(x)f_1^{i_1}\cdots f_s^{i_s} \equiv 0~(\bmod ~p)
\end{equation}
and
\begin{equation}
\label{eq: 2}
\sum_{(i_1,\ldots ,i_s)\in \mathcal{T}} c_{i_1,\ldots ,i_s}(x^p)A_1(x)^{-i_1}\cdots A_s(x)^{-i_s} f_1^{i_1}\cdots f_s^{i_s} \equiv 0~(\bmod ~p) \, .
\end{equation}
Pick $(j_1,\ldots ,j_s)\in \mathcal{T}$. Multiplying Equation (\ref{eq: 1}) by $c_{j_1,\ldots, j_s}(x^p)$ 
and Equation (\ref{eq: 2}) by $c_{j_1,\ldots, j_s}(x)A_1(x)^{j_1}\cdots A_s(x)^{j_s} $ and subtracting, 
we obtain a new relation with a smaller number of terms.  
By minimality, this ensures that all coefficients should be congruent to zero mod $p$.  It thus follows that for 
all $(i_1,\ldots ,i_s)\in \mathcal{T}$ we have 
$$c_{i_1,\ldots ,i_s}(x)c_{j_1,\ldots, j_s}(x^p)
\equiv c_{i_1,\ldots ,i_s}(x^p)c_{j_1,\ldots, j_s}(x)A_1(x)^{j_1-i_1}\cdots A_s(x)^{j_s-i_s} ~\bmod ~p \, .
$$
Equivalently, this gives that  
$$
c_{i_1,\ldots ,i_s}(x)^{-(p-1)}c_{j_1,\ldots, j_s}(x)^{p-1}\equiv A_1(x)^{j_1-i_1}\cdots A_s(x)^{j_s-i_s} ~(\bmod ~p) \, .
$$
Since $\mathcal{T}$ has at least two elements, we see that there exist $a_1,\ldots ,a_s\in \mathbb Z$, not all zero and dependent on $p$, 
with $|a_1|+\cdots +|a_s| \leq \sqrt{p}s$ and such that 
$A_1^{a_1}\cdots A_s^{a_s}\equiv (A(x)/B(x))^{p-1}~(\bmod ~p)$, for some relatively prime polynomials $A(x)$ and $B(x)$ in $\mathbb F_p(x)$. 
By Corollary \ref{cor: 2}, there exists a polynomial $T(x)\in \mathbb Z[x]$ that does not depend on $p$ 
and such that  every non-constant irreducible factor of either $A(x)$ or $B(x)$ must be a divisor of $T_{\vert p}(x)$.  
Set $h(x):=f_1^{-a_1}(x)\cdots f_s^{-a_s}(x)$ and $R(x)=A(x)/B(x)$.  Then
\begin{eqnarray*}
h(x^p)&=&f_1^{-a_1}(x^p)\cdots f_s^{-a_s}(x^p) \\ 
&\equiv& f_1^{-a_1}(x)\cdots f_s^{-a_s}(x)A_1^{a_1}(x)\cdots A_s^{a_s}(x) ~(\bmod p)\\ 
&\equiv& h(x)R(x)^{p-1}~(\bmod p)
\end{eqnarray*} 
and so
$h(x)$ is congruent to a scalar multiple of $R(x)$ modulo $p$. Without loss of generality, we may assume that 
$f_1^{-a_1}\cdots f_s^{-a_s}\equiv R(x)~(\bmod ~p)$.
Differentiating with respect to $x$ and dividing by $f_1^{-a_1}\cdots f_s^{-a_s}$, 
we obtain that 
$$
\sum_{i=1}^s a_i f_i'(x)/f_i(x)~\equiv ~R'(x)/R(x)~(\bmod~p)\, .
$$  
Since by assumption $p>\sqrt{p}s$ and not all the $a_i$ are equal to zero, this provides a non-trivial linear 
relation over $\mathbb F_p$. 
Now let us observe that $R'(x)/R(x)\equiv U(x)/T(x) \bmod p$ for some polynomial $U(x)\in \mathbb Z[x]$ of degree less than the degree of 
$T(x)$.  Let $d$ denote the degree of $T(x)$.  Then we just proved that 
$1,x,\ldots, x^{d-1}, T(x)F_1'(x)/F_1(x),\ldots ,T(x)F_s'(x)/F_s(x)$ are $\mathbb F_p$-linearly dependent when reduced modulo $p$.  
Since this holds for infinitely many $p$, 
Lemma \ref{lem: 0} implies the existence of linear relation over $\mathbb{Q}$.  
Dividing such a relation by $T(x)$, we obtain that there exists a nontrivial $\mathbb Q$-linear combination of 
$f_1'(x)/f_1(x),\ldots, f_s'(x)/f_s(x)$  that belongs to 
$\mathbb Q(x)$. This ends the proof.  
\end{proof}

We are now ready to prove Theorem \ref{thm: pA}.

\begin{proof}[Proof of Theorem \ref{thm: pA}] 
Let $s$ and $a$ be two positive integers. Set   
$$
R_s(x_1,\ldots,x_a)\ := \  \sum_{r=6}^{6+s-1} \frac{1}{1-(x_1+\ldots+ x_r)}  \in \mathbb Z[[x_1,\ldots,x_{6+s-1}]] 
$$ and 
$f(x) := \Delta\left(R_s\right)$.   
An easy computation first gives that 
\begin{eqnarray*}
\Delta\left(\displaystyle\frac{1}{1-(x_1+\cdots+x_r)}\right)& =&  \  \sum_{n= 0}^{+\infty} {rn \choose n,\ldots,n} x^n \\ 
  &=& \  \sum_{n= 0}^{+\infty} \frac{(rn)!}{n!^r} x^n =: f_r(x) \, . 
\end{eqnarray*}
Thus, $f(x) = f_6(x) +\cdots + f_{6+s-1}(x)$. 
Our aim is now to prove that $\deg(f_{\vert p}) \geq p^{s/2}$ for every prime $p$ large enough.   

\medskip

We recall that for every $r\geq 1$, the power series $f_r$ belongs to $\mathcal{L}$. We also  
let $A_{r}(x)$ be a polynomial such that $f_r(x) \equiv A_{r}(x)f_r(x^p) \bmod ~p$.  
Notice that Stirling's formula gives
\begin{equation}\label{eq: stirling}
\frac{(rn)!}{n!^r}\sim r^{rn+1/2}\sqrt{2\pi n}^{1-r}
\end{equation} 
and so the radius of convergence of $f_r(x)$ is $1/r^r$ 
and by Pringsheim's theorem, a singularity occurs at $x=1/r^r$.  
This implies that $f_1,f_{2},\ldots$ are linearly independent over $\mathbb Q$.  Indeed, if, for some positive integer $n$, there would 
be a nontrivial relation $a_1f_1 + \cdots a_n f_n=0$ with $a_n\not = 0$. We would have that $a_nf_n = a_1f_1 + \cdots + a_{n-1}f_{n-1}$ 
but the right-hand side is analytic in a neighbourhood of $1/n^n$ while the left-hand side is not. 

\medskip

We also infer from (\ref{eq: stirling}) that 
$$
\frac{(rn)!}{n!^r}r^{-rn}\sim r^{1/2}\sqrt{2\pi n}^{1-r}
\;\;\mbox{  and }\;\;
n\frac{(rn)!}{n!^r}r^{-rn}\sim nr^{1/2}\sqrt{2\pi n}^{1-r}\, .
$$ 
For $r \geq 6$, the right-hand sides are both in ${\rm O}(1/n^{3/2})$ which implies that  
$\lim_{x\to (1/r^r)^{-}} f_r(x)$ and $\lim_{x\to 1/(r^r)^{-}} f'_r(x)$ both exist and are finite. 

\medskip

We observe now that $f_r'(x)/f_r(x)$ must have a singularity at $x=1/r^r$, while it is clearly analytic inside the 
disc of radius $1/r^{r}$. 
Indeed, otherwise it would be analytic in an open ball 
$U$ containing $1/r^r$ and we may define a function $T_r(z)$, analytic in $U$, by declaring
$T_r(z)=\int_{\gamma} f_r'(z)/f_r(z) \, dz$, where $\gamma$ is any path in $U$ from $1/r^r$ to $z$.
Then notice that $f_r(z)\exp(-T_r(z))$ has derivative zero on $U$ and thus
$f_r(z)=C\exp(-T_r(z))$ would be analytic in $U$, contradicting the fact that $f_r$ has a singularity at $1/r^r$. 
Furthermore,  if $t>r$, then $f_r(1/a^a)>0$ is nonzero and hence $f_r'(z)/f_r(z)$ is analytic in some neighbourhood 
of $1/t^t$.   

\medskip

We claim there does not exist a nontrivial $\mathbb Q$-linear combination of elements of 
$\{f_r'(x)/f_r(x)~\mid~r\ge 6\}$ that is equal to a rational function.  To see this, suppose that we have a nontrivial relation
\begin{equation}
\label{eq: xxx}
\sum_{i=6}^n c_i f_{i}'(x)/f_{i}(x) \ = \ R(x) \in \mathbb Q(x)
\end{equation} with $c_n\neq 0$.  Recall that 
$c_n f_{n}'(x)/f_{n}(x)$ has a singularity at $x=1/n^{n}$.  But, by the preceding remarks, the other terms are analytic in a 
neighbourhood of $x=1/n^{n}$ and thus $R(x)$ must have a pole at $x=1/n^n$.  Otherwise, we could express $c_n f_n'(x)/f_n(x)$ 
as a linear combination of elements that are analytic in a neighbourhood of $1/n^n$, which would give a contradiction.  
But if we look at the limit as $x\to 1/n^{n}$ from the left along the real line, we have that the limit of the left-hand side of 
Equation (\ref{eq: xxx}) is a real number since $\lim_{x\to (1/n^n)^{-}} f_n(x)$ and $\lim_{x\to 1/(n^n)^{-}} f'_n(x)$ both exist and are finite, 
while the limit on the right-hand side goes to infinity since $R$ has a pole at $x=1/n^n$, a contradiction.  Thus, there does not exist a nontrivial 
$\mathbb Q$-linear 
combination of elements of $\{f_r'(x)/f_r(x)~\mid~a\ge 6\}$ 
that is equal to a rational function. 

\medskip

Now, by Lemma \ref{lem: r}, we obtain that there do not exist infinitely many primes $p$ for which  
there is a polynomial $Q(x,x_1,\ldots, x_s)\in \mathbb{Z}[x][x_1,\ldots ,x_s]$ of total degree at most 
$\sqrt{p}s$ in $x_1,\ldots ,x_d$ such that  $Q_{\vert p}$ is nonzero and $Q(f_6(x),\ldots ,f_{6+s-1}(x))\equiv 0~(\bmod ~p)$.   
Since $f_6,\ldots, f_{6+s-1}$ are linearly independent over $\mathbb Q$, Lemma \ref{lem: 1} implies that 
$\deg(f_{\vert p} )\geq p^{s/2}$ for every prime $p$ large enough, concluding the proof.
\end{proof}

\section{Connection with enumerative combinatorics, automata theory and decidability}\label{sec: decid}

Formal power series with integer coefficients naturally occur as generating functions in enumerative combinatorics 
(see  \cite{St78,St99}). In this 
area we have the following natural hierarchy:
$$
\left\{\mbox{ rational  }\right\}\; \subset \;\left\{\mbox{ algebraic   }\right\} \;\subset \; \left\{\mbox{ $D$-finite power series }\right\}
$$
where a power series is differentially finite, or $D$-finite for short, if it satisfies a homogeneous linear 
differential equation with polynomial coefficients. Most of the generating functions that are studied in enumerative combinatorics 
turn out to be $D$-finite (see for instance \cite{St99}).  
Now it may be relevant to make the following observation: 
a $D$-finite power series in $\mathbb Z[[x]]$ with a positive radius of convergence is a $G$-function and, according to a conjecture of Christol, 
it should be the diagonal of a rational function, as it is {``}globally bounded" (see \cite{Ch86,An89}). Thus, at least conjecturally, most of 
$D$-finite power series that appear in enumerative combinatorics should be diagonals of rational functions.

\medskip

The present work has some connection with the classical problem of finding congruence relations satisfied by the coefficients of 
generating functions.  Given a generating function $f(x) = \sum_{n=0}^{+\infty} a(n)x^n\in\mathbb Z[[x]]$, a prime number $p$, and two 
nonnegative integers $b$ and $r$, a  standard problem is to determine 
 the integers $n$ such that $a(n)\equiv b \bmod p^r$. 
In other words, the aim is to describe sets such as 
${\mathcal S} := \left\{ n\in \mathbb N \mid  a(n)\equiv b \bmod p^r \right\}$. Now, if $f$ is a diagonal 
of a rational function (which as just explained should be the typical situation), 
the Furstenberg--Deligne theorem implies that $f_{\vert p}$ is algebraic over $\mathbb F_p(x)$. Then  
a classical theorem of Christol \cite{CKMR}, as revisited in \cite{DL}, implies that  the sequence $(a(n)\bmod p^r)$ is $p$-automatic which means that it can be generated by a finite 
$p$-automaton.  In particular, ${\mathcal S}$ is a $p$-automatic set. We recall that an infinite sequence $a$ with values in a finite set 
is said to be $p$-automatic if $a(n)$ is a finite-state function of the base-$p$ 
representation of $n$. Roughly, this means that there exists a finite automaton taking the
 base-$p$ expansion of $n$ as input and producing the term $a(n)$ as 
output. A set ${\mathcal E}\subset \mathbb N$ is said to be  
$p$-automatic if there exists a finite automaton that reads as input the base-$p$ expansion of 
$n$ and accepts this integer (producing as output the symbol $1$) if  $n$ belongs to 
${\mathcal E}$, otherwise this automaton rejects the integer 
$n$, producing as output the symbol $0$. For more formal definitions we refer the reader to \cite{AdBe}. 
 It was already noticed in \cite{LvdP} that 
such a description in terms of automata provides a vast range of congruences for coefficients of diagonals of rational power series.  
The present work emphasizes some effective aspects related to these congruences.  
Indeed, by Theorem \ref{theo: modp} we are able to give an effective 
bound for the degree and the height of the algebraic function $f_{\vert p^r}$. 
As explained in \cite{AdBe}, this allows to bound the number of states of the underlying 
$p$-automaton. This gives the following result.

\begin{thm} 
Let $f(x) = \sum_{n=0}^{+\infty} a(n)x^n\in\mathbb Z[[x]]$ be the diagonal of an algebraic function. Let $b$ and $r$ be positive integers and  
$p$ be a prime number. Then the set 
$$
{\mathcal S} := \left\{ n\in \mathbb N \mid  a(n)\equiv b \bmod p^r \right\}$$
is a $p$-automatic set that can be effectively determined. 
In particular, the following properties are all decidable:

\medskip

\begin{itemize}
\item[{\rm (i)}] the set ${\mathcal S}$ is empty.

\medskip

\item[{\rm (ii)}] the set ${\mathcal S}$ is finite.

\medskip

\item[{\rm (iii)}] the set ${\mathcal S}$ is periodic, that is, formed by the union of a finite set and 
of a finite number of  arithmetic progressions.
\end{itemize}
\end{thm}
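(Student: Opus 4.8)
The plan is to prove the three assertions in turn: that the sequence $(a(n)\bmod p^r)_n$, and hence the set $\mathcal S$, is $p$-automatic; that a finite automaton recognising $\mathcal S$ can be produced effectively, with an explicit bound on its number of states; and that the properties (i), (ii), (iii) are then decided by standard algorithms on finite automata. For \emph{automaticity}, recall that by hypothesis $f=\Delta(g)$ with $g$ algebraic over $K(x_1,\ldots,x_n)$. By the rationalization theorem of Denef and Lipshitz \cite{DL} (or by Theorem~\ref{theo:mainrat}) we may write $f=\Delta(R)$ with $R=P/Q$ a rational function in $2n$ variables, normalised so that $Q$ has constant term $1$. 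Inverting the finitely many primes dividing the denominators of the coefficients of $P$ and $Q$, we may view $P,Q$ as polynomials over $\mathbb Z/p^r\mathbb Z$ with $Q$ a unit of $(\mathbb Z/p^r\mathbb Z)[[x_1,\ldots,x_{2n}]]$, so that $R_{\vert p^r}:=P_{\vert p^r}/Q_{\vert p^r}$ makes sense and $\Delta(R_{\vert p^r})=f_{\vert p^r}$. Iterating the fact that $a\equiv b\pmod p$ implies $a^{p^{s-1}}\equiv b^{p^{s-1}}\pmod{p^s}$ gives the congruence $Q(\mathbf x)^{p^r}\equiv Q(\mathbf x^{p^r})\pmod{p^r}$, whence modulo $p^r$ one has the identity
\[
\frac PQ\ \equiv\ \frac{P(\mathbf x)\,Q(\mathbf x)^{p^r-1}}{Q(\mathbf x^{p^r})}\ .
\]
Expanding the numerator in base $p^r$ shows that the base-$p^r$ section operators send the \emph{finite} family $\{S/Q:\deg S\le N\}$ into itself, where $N=\deg R$, so the $p^r$-kernel of $R_{\vert p^r}$ is finite; since $\Delta$ intertwines the one-variable section operators with the multivariate ones (as in Proposition~\ref{prop: 1}), the $p^r$-kernel of $f_{\vert p^r}=\Delta(R_{\vert p^r})$ is finite as well. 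A sequence over a finite set with finite $p^r$-kernel is $p^r$-automatic, equivalently $p$-automatic; hence $(a(n)\bmod p^r)_n$ is $p$-automatic and $\mathcal S$, being the preimage of a single letter, is a $p$-automatic set. (The finitely many primes set aside above are already covered by \cite{DL}, or can be treated by the same scheme after an innocuous change of model.)

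For \emph{effectivity} it suffices to bound the size of the $p$-kernel of $f_{\vert p^r}$ in terms of $n$, $\deg f$, $\mathrm{ht}\,f$, $p$ and $r$. Theorem~\ref{theo: modp} gives that $f_{\vert p}$ is algebraic over $\mathbb F_p(x)$ of degree at most $p^A$ and height at most $A^2p^{A+1}$; lifting a vanishing polynomial of $f_{\vert p}$ to $\mathbb Z[x][Y]$ and raising it to the $r$-th power (the remark following Theorem~\ref{theo: modp}) shows that $f_{\vert p^r}$ is a root of an explicit polynomial over $(\mathbb Z/p^r\mathbb Z)[x]$ of degree at most $rp^A$ in $Y$ and height at most $rA^2p^{A+1}$. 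Combined with the automaticity just established, this yields, exactly as in \cite{AdBe}, an effective bound on the $p$-kernel of $f_{\vert p^r}$, hence on the number of states of a $p$-automaton generating $f_{\vert p^r}$ and on those of the minimal automaton recognising $\mathcal S$; the automaton itself is then obtained effectively, for instance by enumerating the $p$-kernel up to the known bound.

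For \emph{decidability}, given such a finite automaton recognising $\mathcal S$ (over the alphabet $\{0,\ldots,p-1\}$, reading base-$p$ expansions): emptiness of $\mathcal S$ is decided by testing reachability of an accepting state; finiteness is decided by testing whether some accepting state is reachable from the initial state through, or past, a directed cycle; and ultimate periodicity of $\mathcal S$ is decidable by Honkala's decision procedure for eventual periodicity of automatic sets. This gives (i), (ii) and (iii).

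The main obstacle is the effective control of the reduction modulo $p^r$ for $r\ge 2$. Modulo $p$ one works over the field $\mathbb F_p$ and Christol's theorem \cite{CKMR} applies directly, but $\mathbb Z/p^r\mathbb Z$ is not a field, and being a root of a polynomial over $(\mathbb Z/p^r\mathbb Z)[x]$ does \emph{not} by itself imply automaticity: for a non-automatic $u\in\mathbb F_2[[x]]$, any lift of $2u$ to $\mathbb Z/4\mathbb Z$ satisfies the monic equation $Y^2=0$ yet has a non-automatic coefficient sequence. One must therefore use the specific structure — $f_{\vert p^r}$ is the reduction of a diagonal — to obtain finiteness of the $p$-kernel, and then feed the quantitative Furstenberg--Deligne bounds of Theorem~\ref{theo: modp} into the mod-$p^r$ version of Christol's theorem from \cite{AdBe} so as to keep the number of states effectively bounded in $n$, $\deg f$, $\mathrm{ht}\,f$, $p$ and $r$. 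Organising this passage without losing effectivity is the delicate point.
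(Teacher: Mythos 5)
Your overall route --- rationalize $f$ as the diagonal of a rational function $P/Q$ via \cite{DL} (or Theorem \ref{theo:mainrat}), obtain a finite kernel modulo $p^r$ by section operators, make the state bound effective through Theorem \ref{theo: modp} and \cite{AdBe}, and then decide (i)--(iii) by standard algorithms on the resulting automaton (Honkala's procedure for ultimate periodicity) --- is the same as the paper's, which simply quotes Christol's theorem as revisited by Denef and Lipshitz \cite{DL} for the modulo $p^r$ automaticity and \cite{AdBe} for the effectivity; your caveat that a bare algebraic relation over $(\mathbb Z/p^r\mathbb Z)[x]$ does not by itself imply automaticity is pertinent. However, the one step you actually prove rests on a false congruence. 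The claim $Q(\mathbf x)^{p^r}\equiv Q(\mathbf x^{p^r})\pmod{p^r}$ fails for $r\ge 2$: with $p=2$, $r=2$, $Q=1+x$ one has $(1+x)^4=1+4x+6x^2+4x^3+x^4\equiv 1+2x^2+x^4\not\equiv 1+x^4\pmod 4$. Iterating ``$a\equiv b\pmod p$ implies $a^{p^{s-1}}\equiv b^{p^{s-1}}\pmod{p^s}$'' only yields $Q(\mathbf x)^{p^s}\equiv Q(\mathbf x^{p})^{p^{s-1}}\pmod{p^s}$, and each further substitution $\mathbf x\mapsto\mathbf x^p$ costs one power of $p$, so $Q(\mathbf x)^{p^{s}}\equiv Q(\mathbf x^{p^{r}})^{p^{s-r}}$ holds only modulo $p^{s-r+1}$ (one needs $s\ge 2r-1$ to work modulo $p^r$). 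Consequently your family $\{S/Q:\deg S\le N\}$ is \emph{not} stable under the base-$p^r$ (or base-$p$) section operators modulo $p^r$: after one application the denominator becomes a power of $Q$ of the order of $Q^{p^{r-1}}$, not $Q$. For $r=1$ your argument is exactly the paper's Section \ref{sec: main} computation, but for $r\ge 2$ the step as written fails.

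The gap is repairable along standard lines, and the repaired argument is precisely the one the paper cites. Work instead with the family $\bigl\{S/Q^{p^{r-1}} : \deg S\le C\bigr\}$ for an explicit $C$ of the order of $p^{r-1}\deg Q+\deg P$, write $S/Q^{p^{r-1}}=S\,Q^{p^{r-1}(p-1)}/Q^{p^{r}}$ and use the correct congruence $Q(\mathbf x)^{p^{r}}\equiv Q(\mathbf x^{p})^{p^{r-1}}\pmod{p^{r}}$; then the base-$p$ section operators (no $p$-th roots are needed over $\mathbb Z/p^r\mathbb Z$) send this family into itself, the intertwining with $\Delta$ is as in Proposition \ref{prop: 1}, and one recovers finiteness of the $p$-kernel of $f_{\vert p^r}$ together with an explicit bound on its cardinality, hence on the number of states. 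This is the Denef--Lipshitz argument \cite{DL} in the effective form used in \cite{AdBe} (see also Rowland--Yassawi for exactly this mod-$p^r$ bookkeeping). Once this replaces your false congruence, the remainder of your proposal --- the effectivity bookkeeping via Theorem \ref{theo: modp} and its following remark, and the decision procedures for (i), (ii), (iii) --- is sound and matches the paper's (terser) treatment.
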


\medskip 

\begin{figure}[h]
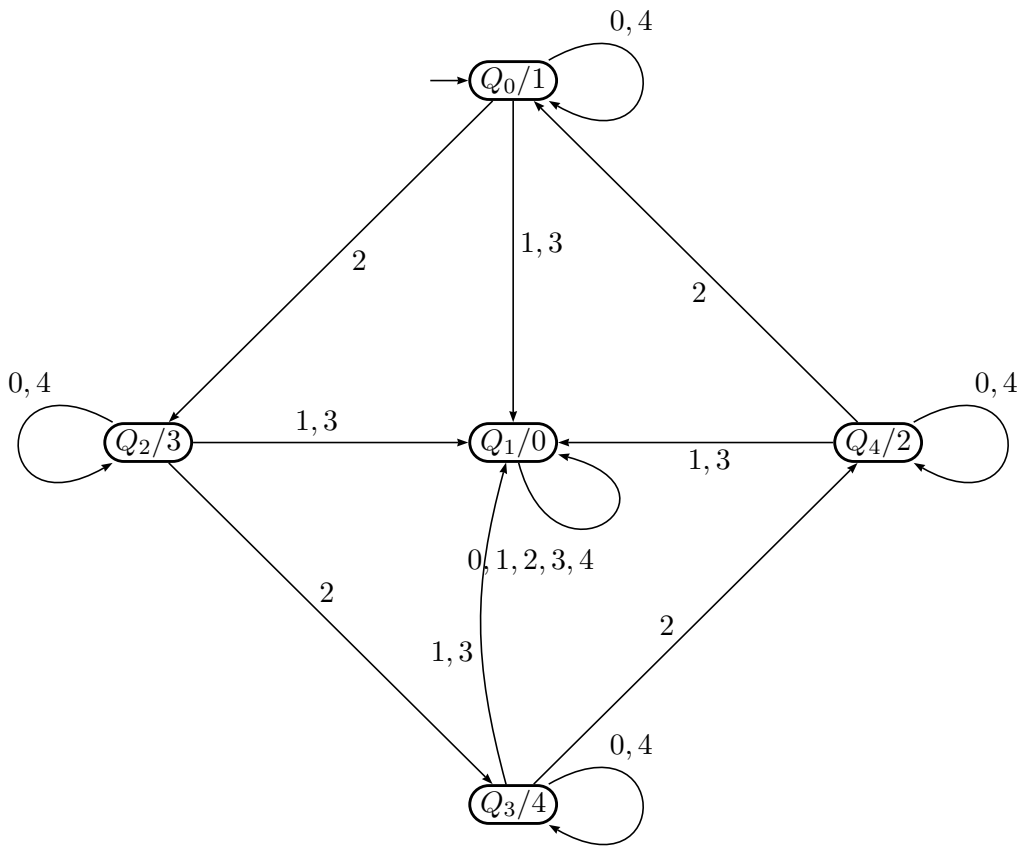


\centering 
\VCDraw{  \begin{VCPicture}{(0,-18)(16,1)}
    \StateVar[Q_0/1]{(8,0)}{A} \StateVar[Q_2/3]{(0,-8)}{B}   \StateVar[Q_1/0]{(8,-8)}{C}    \StateVar[Q_4/2]{(16,-8)}{D} 
     \StateVar[Q_3/4]{(8,-16)}{E}   
    \Initial{A} 
     \EdgeL{A}{B}{2}   \EdgeL{B}{E}{2} \EdgeL{E}{D}{2} \EdgeL{D}{A}{2}
     
    \EdgeL{A}{C}{1,3}   \EdgeL{B}{C}{1,3} \EdgeL{D}{C}{1,3}  \ArcL{E}{C}{1,3}
   
     \LoopSE{C}{\;\;\;\;\;\;\;\;\;\; 0,1,2,3,4}    \LoopE{E}{0,4}  \LoopE{A}{0,4} \LoopW{B}{0,4}  \LoopE{D}{0,4}

  \end{VCPicture}}
  \caption{A $5$-automaton generating the Ap\'ery sequence modulo $5$.}
  \label{AB:figure:ap}
\end{figure}

As an illustration, we give in Figure \ref{AB:figure:ap} the picture of a $5$-automaton that generates the Ap\'ery 
numbers $a(n) = \sum_{k=0}^n {n\choose k}^2{n+k \choose k}^2$ modulo $5$.  
We thus have that:  
$a(n) \equiv  0 \bmod 5$ if the base-$5$ expansion of $n$ contains at least a $1$ or a $3$;  
$a(n) \equiv  1 \bmod 5$ if the base-$5$ expansion of $n$ 
does not contains the digits $1$ and $3$ and if the number of $2$'s is congruent to $0\bmod 4$;  
 $a(n) \equiv  2 \bmod 5$ if the base-$5$ expansion of $n$ 
does not contains the digits $1$ and $3$ and if the number of $2$'s is congruent to $3\bmod 4$; 
 $a(n) \equiv  3 \bmod 5$ if the base-$5$ expansion of $n$ 
does not contains the digits $1$ and $3$ and if the number of $2$'s is congruent to $1\bmod 4$; 
 $a(n) \equiv  4 \bmod 5$ if the base-$5$ expansion of $n$ 
does not contains the digits $1$ and $3$ and if the number of $2$'s is congruent to $2\bmod 4$. 
In this direction, Beukers made the following conjecture \cite{Be86}: if $r$ denotes the sum of the number of $1$'s and the number 
of $3$'s in the base-$5$ expansion of $n$, then $a(n)\equiv 0 \bmod p^r$. 
Recently, Delaygue \cite{Delaygue} announced a proof of this conjecture. 
In order to answer this kind of question, it would be interesting to understand, 
given the diagonal of a rational power series 
$f(x) = \sum_{n\geq 0} a(n)x^n\in\mathbb Z[[x]]$,  the connection between the $p$-automaton that generates 
$a(n)\bmod p^r$ and the one that generates $a(n) \bmod p^{r+1}$ for every positive integer $r$.

\section{Algebraic independence for $G$-functions with the Lucas property}\label{sec: alg}

In this section, we come back to the results  obtained in Section \ref{sec: high} when proving Theorem \ref{thm: pA}. 
It turns out that 
we have incidentally  proved a result of independent interest about  algebraic independence of some $G$-functions. 

\medskip

We recall 
that a sequence $a:\mathbb{N}\to \mathbb{Z}$ has the Lucas property if for every prime $p$ we have $a(pn+j)\equiv a(n)a(j)\, (\bmod ~p)$.   
In 1980, Stanley \cite{St80} conjectured that, for positive integer $t$, the power series $\sum_{n= 0}^{+\infty} {2n \choose n}^t x^n$ 
is transcendental over $Q(x)$ unless $t=1$, in which case it is equal to $1/\sqrt{ 1 - 4x}$. He also proved the transcendence in the case where 
 $t$ is even. The conjecture was proved independently by Flajolet \cite{Fl} and by Sharif and Woodcock \cite{SW2} with two different methods. The proof of 
Sharif and Woodcock is based on the Furstenberg--Deligne theorem and use the fact that the sequence ${2n \choose n}^t $ satisfies the Lucas 
property. These authors also proved in the same way the transcendence of $ \sum_{n= 0}^{+\infty} {rn \choose n,\ldots,n}^t x^n$ 
for every integers $r\geq 3$, $t\geq 1$.  Their approach was then developed by Allouche {\it et al.} in \cite{AGS} (see also \cite{All99}) 
who obtained a general criterion for the algebraicity of formal power series with coefficients in $\mathbb Q$ satisfying the Lucas property. 
However, it seems that not much is known about algebraic independence of such power series. 
As a first result in this direction, we prove Theorem \ref{cor: 3} below.   
We recall that $\mathcal{L}$ denotes the set of all 
power series in $\mathbb{Z}[[x]]$ that have constant coefficient one, whose sequence of coefficients has the Lucas property, and that 
satisfy a homogeneous linear differential equation with coefficients in $\mathbb Q(x)$.

\begin{thm} Let $f_1,\ldots ,f_s$ be elements of $\mathcal{L}$ such that 
there is no nontrivial  $\mathbb Q$-linear combination of $f_1'/f_1,\ldots ,f_s'/f_s$  that belongs to $\mathbb Q(x)$. 
Then $f_1,\ldots ,f_s$ are algebraically independent over $\mathbb{Q}(x)$.
\label{cor: 3}
\end{thm}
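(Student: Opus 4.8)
The plan is to prove the contrapositive: if $f_1,\ldots,f_s$ are algebraically dependent over $\mathbb{Q}(x)$, then some nontrivial $\mathbb{Q}$-linear combination of $f_1'/f_1,\ldots,f_s'/f_s$ lies in $\mathbb{Q}(x)$. The point is that Theorem \ref{cor: 3} is, after a routine reduction, nothing more than the contrapositive of Lemma \ref{lem: r}: all the substantial work was already done in Section \ref{sec: high}, and in particular Lemma \ref{lem: 1} is not needed here.

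First I would reduce an algebraic relation over $\mathbb{Q}(x)$ to an integral one that survives reduction modulo a prime. By hypothesis there is a nonzero polynomial in $\mathbb{Q}(x)[x_1,\ldots,x_s]$ vanishing at $(f_1,\ldots,f_s)$; clearing the denominators of its coefficients and then clearing integer denominators produces a nonzero polynomial $P(x,x_1,\ldots,x_s)\in\mathbb{Z}[x][x_1,\ldots,x_s]$ with $P(x,f_1(x),\ldots,f_s(x))=0$, the last identity holding in $\mathbb{Z}[[x]]$ since each $f_i$ already has integer coefficients. Let $D$ be the total degree of $P$ in $x_1,\ldots,x_s$; this is a fixed integer, independent of any prime. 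Since $P\neq 0$, some integer coefficient of $P$ is nonzero, so $P_{\vert p}$ is a nonzero polynomial for all but finitely many primes $p$; and since $\sqrt{p}\to\infty$, we have $D\le\sqrt{p}\,s$ for all $p$ large enough. Hence for infinitely many primes $p$ the polynomial $Q:=P$ has total degree at most $\sqrt{p}\,s$ in $x_1,\ldots,x_s$, satisfies $Q_{\vert p}\neq 0$, and satisfies $Q(f_1(x),\ldots,f_s(x))\equiv 0~(\bmod~p)$ (trivially, since the left-hand side is identically $0$).

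These are exactly the hypotheses of Lemma \ref{lem: r}, which therefore yields a nontrivial $\mathbb{Q}$-linear combination of $f_1'(x)/f_1(x),\ldots,f_s'(x)/f_s(x)$ that belongs to $\mathbb{Q}(x)$, contradicting the assumption of the theorem; this establishes the contrapositive. I do not anticipate a genuine obstacle: the only mild points are checking that the passage to an integral relation surviving reduction modulo $p$ is legitimate (it is, because the $f_i$ lie in $\mathbb{Z}[[x]]$) and that the fixed degree $D$ eventually drops below the moving threshold $\sqrt{p}\,s$ (it does). The real content of the argument is concealed inside Lemma \ref{lem: r}, which itself relies on Corollary \ref{cor: 2} and Lemma \ref{lem: 2}.
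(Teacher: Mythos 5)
Your proposal is correct and follows essentially the same route as the paper: both reduce an algebraic dependence to a nonzero integer polynomial relation, observe that for all sufficiently large primes $p$ its total degree is below $\sqrt{p}\,s$ and its reduction mod $p$ is nonzero, and then invoke Lemma \ref{lem: r} to produce the forbidden rational linear combination of the logarithmic derivatives. The only cosmetic difference is that you phrase it as a contrapositive while the paper argues by contradiction.
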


\begin{proof} Let us assume that $f_1,\ldots, f_s$ are algebraically dependent. Then there exists a nonzero polynomial 
$Q\in\mathbb Z[x,x_1,\ldots,x_s]$ such that $Q(x,f_1,\ldots ,f_s)=0$. Note that for all sufficiently large primes $p$, the total degree of 
$Q$ is less than $\sqrt p s$ and $Q_{\vert p}$ is nonzero. 
Thus Lemma \ref{lem: r} implies the existence of nontrivial  $\mathbb Q$-linear combination of $f_1'/f_1,\ldots ,f_s'/f_s$ 
that is equal to a rational function, a contradiction. 
\end{proof}

We then deduce the following consequences of Theorem \ref{cor: 3}. 

\begin{cor}\label{cor: ai}
Set  $\displaystyle f_r(x) := \sum_{n= 0}^{+\infty} {rn \choose n,\ldots,n} x^n$ and $\displaystyle g_r := \sum_{n= 0}^{+\infty} {2n \choose n}^r x^n$. 
Then  $\left\{ f_r \mid r\geq 6\right\}$ and $\left\{ g_r \mid r\geq 4\right\}$
are two families of algebraically independent functions over $\mathbb Q(x)$. 
\end{cor}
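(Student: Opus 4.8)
\textbf{Proof proposal for Corollary \ref{cor: ai}.}

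The plan is to reduce both statements to Theorem \ref{cor: 3}, so the real content is to verify two things for each family: first, that each member lies in $\mathcal{L}$, and second, that no nontrivial $\mathbb{Q}$-linear combination of the logarithmic derivatives $f_r'/f_r$ (respectively $g_r'/g_r$) is a rational function. The membership in $\mathcal{L}$ is essentially already recorded: the sequences ${rn \choose n,\ldots,n} = (rn)!/(n!)^r$ and ${2n \choose n}^r$ have constant term $1$, are classically known to be $D$-finite (they are $G$-functions, and one can exhibit the linear ODE directly from the hypergeometric description), and they satisfy the Lucas property — for $(rn)!/(n!)^r$ this follows from the Lucas congruence for multinomial coefficients, and for ${2n\choose n}^r$ it is immediate from the Lucas property of ${2n\choose n}$ raised to the $r$-th power. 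So the substance is the second point, the absence of a $\mathbb{Q}$-rational linear relation among the $f_r'/f_r$.

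For the family $\{f_r \mid r\ge 6\}$, the argument is already carried out verbatim inside the proof of Theorem \ref{thm: pA}: using Stirling's formula one shows that $f_r$ has radius of convergence $1/r^r$ with a genuine singularity at $x=1/r^r$ (Pringsheim), that for $r\ge 6$ both $f_r$ and $f_r'$ extend continuously to the real point $1/r^r$ from the left, and hence that $f_r'/f_r$ has a singularity at $1/r^r$ while all $f_{r'}$ with $r'>r$ are analytic there. A limit argument along the real axis then shows that no finite $\mathbb{Q}$-combination $\sum_{i=6}^n c_i f_i'/f_i$ with $c_n\neq 0$ can equal a rational function, since the left side stays bounded near $1/n^n$ while a rational function forced to have a pole there blows up. I would simply invoke this portion of the proof of Theorem \ref{thm: pA} and conclude by Theorem \ref{cor: 3}.

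For the family $\{g_r \mid r\ge 4\}$ the same scheme applies with the arithmetic replaced by the corresponding asymptotics: by Stirling, ${2n\choose n}^r \sim 4^{rn}(\pi n)^{-r/2}$, so $g_r$ has radius of convergence $1/4^r$ and a singularity at $x=1/4^r$, and for $r\ge 4$ one has $r/2\ge 2$, so that $\sum n^{\varepsilon}4^{-rn}{2n\choose n}^r$ converges at the boundary for $\varepsilon\in\{0,1\}$; hence $g_r$ and $g_r'$ extend to finite values at $1/4^r$ from the left. One then runs the identical singularity-and-limit argument: $g_r'/g_r$ has a singularity at $1/4^r$, the functions $g_{r'}$ with $r'>r$ are analytic in a neighbourhood of $1/4^r$, a rational function appearing in a putative relation would be forced to have a pole at $1/4^r$, and comparing one-sided real limits yields a contradiction. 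The anticipated main obstacle is purely bookkeeping: one must check carefully that the threshold $r\ge 4$ (rather than $r\ge 6$) is exactly what makes the boundary series for $g_r$ and $g_r'$ converge, i.e.\ that $r/2>1$ for $g_r$ whereas the multinomial case needed $(r-1)/2>1$; once the asymptotic exponents are pinned down the rest is a transcription of the argument already given for Theorem \ref{thm: pA}, followed by an application of Theorem \ref{cor: 3}.
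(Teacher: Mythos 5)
Your overall route is the same as the paper's: membership of $f_r$ and $g_r$ in $\mathcal{L}$ (the paper simply cites Sharif--Woodcock for this), reuse of the singularity-and-limit argument from the proof of Theorem \ref{thm: pA} for the family $\{f_r \mid r\ge 6\}$, an analogous asymptotic argument for $\{g_r \mid r\ge 4\}$, and then an appeal to Theorem \ref{cor: 3}. The paper is equally terse about the second family (``can be proved in a very similar way''), so the only place where your proposal adds content beyond the paper is the explicit bookkeeping for $g_r$ --- and that is precisely where there is a genuine error.

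From ${2n\choose n}^r\sim 4^{rn}(\pi n)^{-r/2}$, the boundary series for $g_r$ at $x=4^{-r}$ has terms $\asymp n^{-r/2}$, while the boundary series for $g_r'$ has terms $\asymp n^{1-r/2}$. Your claim that both converge for all $r\ge 4$ fails at $r=4$: there the derivative series is comparable to the harmonic series, so $\lim_{x\to (4^{-4})^-}g_4'(x)=+\infty$. (The correct thresholds are $r\ge 3$ for $g_r$ and $r\ge 5$ for $g_r'$; similarly, in the multinomial case it is the derivative condition $(r-3)/2>1$, not $(r-1)/2>1$, that forces $r\ge 6$, so your diagnosis of the thresholds is off in both families.) As a consequence, the key comparison ``left-hand side stays bounded while the rational function blows up at its pole'' --- the crux of the argument you are transcribing --- breaks down exactly when the largest index occurring in the putative relation is $4$, i.e.\ for a relation $c_4\, g_4'/g_4=R(x)$; for largest index $\ge 5$ your transcription is correct. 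To obtain the corollary as stated you need a supplementary step at this endpoint, for instance: since the boundary coefficients of $g_4'$ are $\asymp 1/n$, the function $g_4'(x)$ grows only logarithmically as $x\to(4^{-4})^-$, whereas the rational function $R$, which the singularity argument forces to have a pole at $4^{-4}$, grows at least like $(4^{-4}-x)^{-1}$, giving a contradiction in growth rates rather than in finiteness of limits; alternatively, exclude $g_4'/g_4\in\mathbb{Q}(x)$ by a separate argument. Without some such supplement the case $r=4$ of the second family is not covered by your proof.
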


\begin{proof}
The fact that $f_r$ and $g_r$ belong to $\mathcal L$ and are transcendental over $\mathbb Q(x)$ can be found in \cite{SW2}. Furthermore, 
we already obtained in the proof of Theorem \ref{thm: pA} in Section \ref{sec: high} 
that there is no $\mathbb Q$-linear combination of  $f_r'/f_r,\ldots ,f_{r+n}'/f_{r+n}$, $r\geq 6$, $s\geq 1$  
that is equal to a rational function. 
The fact that there is no $\mathbb Q$-linear combination of  $g_r'/g_r,\ldots ,g_{r+n}'/g_{r+n}$, $r\geq 4$, $s\geq 1$  
that is equal to a rational function can be proved in a very similar way.  
Thus Theorem \ref{cor: 3} applies, which implies the result. 
\end{proof}

We note that Theorem \ref{cor: 3} can actually be used to prove the best possible results regarding algebraic independence of 
both families considered in Corollary \ref{cor: ai}. Indeed, we could obtain that 
$\left\{ f_r \mid r\geq 3\right\}$ and $\left\{ g_r \mid r\geq 2\right\}$
are two families of algebraically independent functions over $\mathbb Q(x)$.  
We choose to only give the statement in Corollary \ref{cor: ai} here, as   
it is a direct consequence of the results already proved in Section \ref{sec: high} and does not need additional work. 
Furthermore, it may be the case that Theorem \ref{cor: 3} also has interesting applications regarding algebraic independence 
of other classical families of $G$-functions. Since it is not the focus of the present paper, we plan to investigate 
this question  in more detail  in a future work. 

\end{document}